\theoremstyle{plain}
\newtheorem{thm}{Theorem}
\newtheorem{lem}{Lemma}[section]
\newtheorem{prop}[lem]{Proposition}
\theoremstyle{definition}
\newtheorem{definition}[lem]{Definition}
\newtheorem{ex}[lem]{Example}
\theoremstyle{remark}
\newtheorem{rem}{Remark}[section]
\numberwithin{equation}{section}
\newcommand{\C}{\mathbb{C}}
\newcommand{\R}{\mathbb{R}}
\newcommand{\Z}{\mathbb{Z}}
\newcommand{\D}{\mathcal{D}}
\newcommand{\N}{\mathbb{N}}
\newcommand{\HH}{\mathbb{H}}
\newcommand{\eps}{\varepsilon}
\newcommand{\norm}[1]{\left\Vert#1\right\Vert}
\newcommand*\Laplace{\mathop{}\!\mathbin\bigtriangleup}
\let\Im=\Imag
\DeclareMathOperator{\PSL}{PSL}
\let\Re=\Real
\DeclareMathOperator{\supp}{supp}
\def\Ddots{\mathinner{\mkern1mu\raise\p@
\vbox{\kern7\p@\hbox{.}}\mkern2mu
\raise4\p@\hbox{.}\mkern2mu\raise7\p@\hbox{.}\mkern1mu}}
\newcommand{\eklm}[1]{\left\langle #1 \right\rangle}
\renewcommand{\d}{\,d}
\newcommand{\F}{{\mathcal F}}
\renewcommand{\O}{{\mathcal O}}
\newcommand{\Xbf}{{\mathbf X}}
\renewcommand{\epsilon}{\vararepsilon}
\newcommand{\bdm}{\begin{displaymath}}
\newcommand{\edm}{\end{displaymath}}
\newcommand{\bq}{\begin{equation}}
\newcommand{\eq}{\end{equation}}
\newcommand{\bqn}{\begin{equation*}}
\newcommand{\eqn}{\end{equation*}}
\newcommand{\Cinft}{{C^{\infty}}}
\newcommand{\CT}{{C^{\infty}_c}}
\newcommand{\GL}{\mathrm{GL}}
\newcommand{\SO}{\mathrm{SO}}
\newcommand{\g}{{\mathfrak g}}
\newcommand{\aL}{{\mathfrak a}}
\newcommand{\nL}{{\mathfrak n}}
\renewcommand{\k}{{\mathfrak k}}
\newcommand{\p}{{\mathfrak p}}
\newcommand{\Ad}{\mathrm{Ad}}
\title[Patterson-Sullivan and Wigner distributions]{Patterson-Sullivan and Wigner distributions of convex-cocompact hyperbolic surfaces}
\author[B.~Delarue]{Benjamin Delarue}
\email{bdelarue@math.uni-paderborn.de}
\author[G.~Palmirotta]{Guendalina Palmirotta}
\email{gpalmi@math.uni-paderborn.de}
\address{Universit\"at Paderborn, Warburger Str.\ 100, 33098 Paderborn, Germany}
\date{\today}
\begin{document}

\maketitle

\begin{abstract}
We prove that the Patterson-Sullivan and Wigner distributions on the unit sphere bundle of a convex-cocompact hyperbolic surface are asymptotically identical. This generalizes results in the compact case by Anantharaman-Zelditch and Hansen-Hilgert-Schröder.\\

\noindent \textsc{Keywords.} Patterson-Sullivan distributions, Wigner distributions, Pollicott-Ruelle resonances, convex-cocompact hyperbolic surfaces.
\end{abstract}


\section{Introduction}
Let ${\Laplace}$ be the non-negative Laplace-Beltrami operator on a convex-cocompact hyperbolic surface $\mathbf{X}_\Gamma = \Gamma\backslash\HH^2$, where $\Gamma\subset\mathrm{PSL}(2,\R)$ is a convex-cocompact discrete group and $\HH^2$ the hyperbolic plane that we identify with the quotient $G/K=\mathrm{PSL}(2,\R)/\mathrm{SO}(2)$. 
By the classical works \cite{MazzeoMelrose,Guillope}, the $L^2$-resolvent
$$({\Laplace}-s(1-s))^{-1}: L^2(\mathbf{X}_\Gamma) \longrightarrow L^2(\mathbf{X}_\Gamma)$$
has a meromorphic extension from $\{s \in \C\,|\, \mathrm{Re}(s)>\frac{1}{2}\}$ to $\C$ as a family of continuous operators $$R_{{\Laplace}}(s):C^\infty_c(\mathbf{X}_\Gamma) \rightarrow C^\infty(\mathbf{X}_\Gamma).$$
The poles of $R_{{\Laplace}}(s)$ are called \emph{quantum resonances}. For each quantum resonance $s_0\in \C$, the image of the residue of $R_{{\Laplace}}(s)$ at $s=s_0$ is finite-dimensional and contains the non-zero space $\mathrm{Res}_{\Laplace}^1(s_0)\subset  C^\infty(\mathbf{X}_\Gamma)$ of \emph{quantum resonant states}. They are solutions $\phi$ of the eigenvalue problem  
$$
\Delta \phi=s_0(1-s_0)\phi
$$
with a particular asymptotic behavior towards the boundary at infinity of $\mathbf{X}_\Gamma$ (for details, see \cite[Eq.~(1.1)]{GHWa}). 

Given two quantum resonances $s_0,s_0' \in \C$  and two quantum resonant states $\phi\in \mathrm{Res}^1_{\Laplace}(s_0)$, $\phi'\in \mathrm{Res}^{1}_{\Laplace}(s_0')$,  one can associate to them a distribution $W_{\phi,\phi'}\in \D'(S\mathbf{X}_\Gamma)$ on the unit sphere bundle $S\mathbf{X}_\Gamma$ called the \emph{Wigner distribution} by the formula
    \begin{equation*}
         W_{\phi,\phi'}(u) := \langle \mathrm{Op}(u) \phi, \phi'\rangle_{L^2(\mathbf{X}_\Gamma)},\qquad u\in \CT(S\mathbf{X}_\Gamma),
    \end{equation*}
where the linear operator $\mathrm{Op}(u):C^\infty(\mathbf{X}_\Gamma)\to C^\infty_c(\mathbf{X}_\Gamma)$ is defined by a quantization $\mathrm{Op}$ introduced by Zelditch \cite{Zelditch19860} (see Section \ref{sect:Wigner} for details).   Wigner distributions are also known as \emph{microlocal lifts} or \emph{microlocal defect measures}. 

On the other hand, provided that $s_0,s_0'\not\in -\frac{1}{2}-\frac{1}{2}\N_0$, one can associate to $\phi,\phi'$ another distribution $\mathrm{PS}_{\phi,\phi'} \in \mathcal{D}'(S\mathbf{X}_\Gamma)$ called the  \emph{Patterson-Sullivan distribution} which is quasi-invariant under the classical evolution given by the geodesic flow $\varphi_t$ in the sense that 
\bq
\varphi_t^\ast \mathrm{PS}_{\phi,\phi'}=e^{(\bar s_0'-s_0)t}\mathrm{PS}_{\phi,\phi'}\quad \forall\, t\in \R,\label{eq:quasiinvariancePS}
\eq
where $\bar s_0'$ is the complex conjugate of $s'_0$.
Moreover, $\mathrm{PS}_{\phi,\phi'}$ is supported on the non-wandering set of $\varphi_t$ (see Section \ref{sect:classical_resonances}).  
The terminology comes from their analogy with the construction of boundary \emph{measures} associated to Laplace eigenfunctions studied by Patterson \cite{Patterson76, Patterson87} and Sullivan \cite{Sul79, Sul81}. The definition of the Patterson-Sullivan distribution $\mathrm{PS}_{\phi,\phi'}$ in the classical approach of  \cite{AZ07,AZintertwining, HHS12} makes explicit use of the Helgason boundary values  $T_{\phi},T_{\phi'} \in \D'(\partial_\infty \mathbb{H}^2)$ of $\phi$ and $\phi'$, which are distributions on the boundary $\partial_\infty \mathbb{H}^2\cong \mathbb{S}^1$ of $\HH^2$. By identifying distributions on $S\mathbf{X}_\Gamma$ with $\Gamma$-invariant distributions on the unit sphere bundle $S\HH^2$ of the hyperbolic plane, $\mathrm{PS}_{\phi,\phi'} \in \mathcal{D}'(S\mathbf{X}_\Gamma)\cong \mathcal{D}'(S\HH^2)^\Gamma$ is defined as the $\Gamma$-average of $\mathcal{R}'_{s_0,s_0'}(T_{\phi} \otimes \overline T_{\phi'})$, where $\mathcal{R}'_{s_0,s'_0}: \mathcal{D}'(\partial_\infty \mathbb{H}^2 \times \partial_\infty \mathbb{H}^2) \rightarrow \mathcal{D}'(S\HH^2)$ is the {dual of the} weighted Radon transform (see Section~\ref{sect:PS_Radon})
and $\overline{\phantom{a}}$ denotes complex conjugation. 
A more modern approach suggested in \cite{GHWb} uses the language of quantum-classical correspondence (see Proposition~\ref{prop:classicalquantum_corresp}): This correspondence assigns to $\phi,\phi'$ a unique pair $v_\phi,v_{\phi'}^\ast\in \mathcal{D}'(S\mathbf{X}_\Gamma)$ of so-called \emph{Ruelle resonant and co-resonant states}. These distributions are characterized by the property that their pushforwards along the sphere bundle projection $S\mathbf{X}_\Gamma\to \mathbf{X}_\Gamma$ are given by $\phi$ and $\phi'$, respectively, that they are quasi-invariant under the geodesic flow, and that their wavefront sets are contained in the dual stable and unstable subbundles of the geodesic flow, respectively. The latter implies that their distributional product is well-defined. This permits us to express the Patterson-Sullivan distribution by that product:
\bq
v_\phi\cdot \bar v_{\phi'}^\ast=\mathrm{PS}_{\phi,\phi'},\label{eq:vvetoile}
\eq
see Section~\ref{sect:PS_Resonant}. In the proof of the quantum-classical correspondence of \cite{GHWb} the construction of the inverse of the pushforward along the sphere bundle projection involves Helgason boundary values, so that the modern approach is technically not very different from the classical one. Furthermore, in the compact case, Anantharaman and Zelditch already expressed the Patterson-Sullivan distribution in \cite[Prop.~1.1]{AZintertwining} as a well-defined product of joint eigendistributions of the horocycle and geodesic flow, so that in fact the only missing piece to arrive at the modern approach was the interpretation of their construction as a quantum-classical correspondence. However, the passage from compact to non-compact convex-cocompact hyperbolic surfaces of the quantum-classical correspondence had only been achieved in \cite{GHWa}.

It is now a natural question how the two distribution families given by the Wigner and Patterson-Sullivan distributions are related. Note that the Wigner distributions depend on a quantization, whose choice is not unique, while the Patterson-Sullivan distributions do not. Furthermore, the quasi-invariance \eqref{eq:quasiinvariancePS} of the latter with respect to the geodesic flow is not shared by the Wigner distributions {and the Wigner distributions are not supported on the non-wandering set of the geodesic flow} (this can be seen numerically in \cite{WBKPS14}). However, from the classical theory of quantum ergodicity (see e.g.\ \cite{zelditch1987,Zelditch19921})  it is well-known that the (lifted) \emph{quantum limits} obtained by considering the asymptotics of the Wigner distributions along unbounded sequences $s_j,s_j'$ $(j \in \N)$ of quantum resonances have good invariance properties and are independent of the choice of the quantization. Therefore, the goal is to compare the Wigner and Patterson-Sullivan distributions \emph{asymptotically}. This has been successfully established in the compact case by Anantharaman and Zelditch \cite{AZ07,AZintertwining} and by Hansen, Hilgert and Schröder \cite{HHS12}, who generalized the results to compact locally symmetric spaces of higher rank.

 From a scattering theory point of view, it is desirable to relate Wigner and Patterson-Sullivan distributions also on non-compact manifolds. In particular, Schottky surfaces are of interest, as they form a prototypical and  well-studied family of examples of scattering systems (see Example \ref{ex:Schottky}, \cite[Sec.~15.1]{Borthwickbook} {and Figure \ref{fig:resonances}}). To our knowledge, the theory of Patterson-Sullivan distributions on non-compact hyperbolic surfaces and their asymptotics is still largely unexplored, which was a main motivation for the present paper. 

In order to determine a reasonable asymptotic parameter, we need to find and parametrize an unbounded sequence of quantum resonances in the complex plane. For compact $\Xbf_\Gamma$ this is  trivial: The quantum resonances correspond to genuine eigenvalues of the Laplacian and the latter form an unbounded sequence of points on the real line, which means that it suffices to consider only resonances of the form $s_0=\frac{1}{2}+ir${, $s'_0=\frac{1}{2}-ir$}, $r>0$. Then $r$, or equivalently $h\coloneq r^{-1}$, is the natural asymptotic parameter, see \cite[Sec.~3.2]{AZ07}.  For non-compact $\Xbf_\Gamma$, there are no quantum resonances on the \emph{critical line} $\frac{1}{2}+i\R$ except possibly $s_0=\frac{1}{2}$ \cite[Cor.~7.8]{Borthwickbook}   and it is highly non-trivial to determine how close to the critical line one can find an infinite number of quantum resonances, see Section \ref{sec:parameter}. However,  if one is willing to consider a wide enough vertical strip at the left of the critical line, then this is always possible: A result of Guillopé-Zworski \cite{GuillopeZworski} (see also \cite[Thm.~12.4]{Borthwickbook}) implies that for all $C>\frac{3}{2}$ there is an unbounded sequence of quantum resonances in the strip $\{s\in \C\,|\,\frac{1}{2}-C\leq \Re s\leq\frac{1}{2} \}$. 

Another issue that is delicate in the non-compact setting and trivial in the compact case is the normalization of the quantum resonant states, which implies a normalization of the Wigner and Patterson-Sullivan distributions: In the compact case, one simply $L^2$-normalizes. In our non-compact situation, the relevant quantum resonant states do not lie in $L^2(\Xbf_\Gamma)$ (because this would imply that $\Laplace$ has non-real $L^2$-eigenvalues). We focus in this paper on the asymptotic relation between the Wigner and Patterson-Sullivan distributions \emph{relative to each other} rather than relative to some preferred absolute scale. This works within a broad range of ``reasonable normalizations'' (see Remark \ref{rem:norm} and Section \ref{sec:normalization}).   

 We can now state our main result: For $j\in \N$, let $s_j,s_j' \in \C\setminus (-\frac{1}{2}-\frac{1}{2}\N_0)$ be quantum resonances of the form $s_j=q_j+ir_j$, {$s_j'=q_j' -ir_j$}, where $r_j\to +\infty$ as $j\to \infty$ and $\frac{1}{2}-C\leq q_j,q'_j\leq\frac{1}{2}$ for some $C>0$ (larger than the essential spectral gap, e.g.\ $C=2$, see Section \ref{sec:parameter}). Let $\phi_j\in \mathrm{Res}^1_{\Laplace}(s_j)$ and $\phi_j'\in \mathrm{Res}^1_{\Laplace}(s_j')$ be quantum resonant states which are moderately normalized (Definition \ref{def:moderatenormalization}).  Then we obtain 
\begin{thm} \label{thm:asymptotic_WPS}
We have the asymptotic relation between distributions on $S\mathbf{X}_\Gamma$ as $j \rightarrow \infty$:  
  \begin{align}\begin{split}
W_{\phi_j,\phi'_j} &= c\, r_j^{-1/2} {\mathrm{PS}}_{\phi_j,\phi_j'}\big(\bullet +\, \O(r_j^{-1})\big)+\O(r_j^{-\infty}),\\
W_{\phi_j,\phi'_j}\big(\bullet +\, \O(r_j^{-1})\big) &= c\, r_j^{-1/2} {\mathrm{PS}}_{\phi_j,\phi_j'}+\O(r_j^{-\infty}),\label{eq:mainres}\end{split}
  \end{align}
where the constant $c\in \C$ is explicitly given by $c\coloneq \frac{1}{\sqrt{\pi} }e^{-i\frac{\pi}{4}}$.\footnote{The value of this constant depends on {conventions  (cf.~\cite[Sec.~2.4.\ and Prop.~5.3]{AZintertwining}), which explains why our $c$ equals $2\pi$ times the constant  in} \cite[Thm.\ 5]{AZintertwining}.}  

In particular, if for a given test function $u\in\CT(S\mathbf{X}_\Gamma)$ the quantum resonant states $\phi_j,\phi_j'$ are normalized such that $W_{\phi_j,\phi'_j}(u)=\O(1)$ as $j\to \infty$, then 
  \bq
  W_{\phi_j,\phi'_j}(u)=c\, r_j^{-1/2} {\mathrm{PS}}_{\phi_j,\phi_j'}(u)+\O(r_j^{-1}).\label{eq:resultnorm1}
  \eq
More precisely, \eqref{eq:mainres} means: There are operators $L_{N,s_j'},R_{s_j'}:\CT(S\mathbf{X}_\Gamma)\to \CT(S\mathbf{X}_\Gamma)$, $N,j\in \N$, uniformly continuous in $j$ (for fixed $N$ in case of $L_{N,s_j'}$),\footnote{This means that for every continuous seminorm $\mathsf p$ on $\CT(S\mathbf{X}_\Gamma)$ there is a continuous seminorm $\mathsf p_N$ such that $\mathsf p\big(L_{N,s_j'}u\big)\leq \mathsf p_N(u)$ holds for all $u\in \CT(S\mathbf{X}_\Gamma)$ and all $j\in \N$, and similarly for $R_{s_j'}$. } such that  for all $N\in \N$, $u\in \CT(S\mathbf{X}_\Gamma)$ one has
  \begin{align*}
W_{\phi_j,\phi'_j}(u) &= c\, r_j^{-1/2} {\mathrm{PS}}_{\phi_j,\phi_j'}\big(u+R_{s_j'}(u)r_j^{-1}\big)+\O(r_j^{-N}),\\
W_{\phi_j,\phi'_j}\big(u+L_{N,s_j'}(u)r_j^{-1}\big) &= c\, r_j^{-1/2} {\mathrm{PS}}_{\phi_j,\phi_j'}(u)+\O(r_j^{-N}).
  \end{align*}      
\end{thm}

  This generalizes the result \cite[Thm.\ 5]{AZintertwining} of Anantharaman-Zelditch to the convex-cocompact case (see Section \ref{sec:cmptdiff} for a comparison),  proves the conjecture mentioned in \cite[end of p.~672]{SWB},  and gives a partial answer to \cite[Problem~6.22]{Hilgert23}.  In particular, Theorem \ref{thm:asymptotic_WPS} applies to  Schottky surfaces, which are examples of convex-cocompact hyperbolic surfaces \cite{Weich2015_ResonanceChains,BarkhofenFaureWeich2014_ResonanceChains}.

\begin{rem}\label{rem:norm}Theorem \ref{thm:asymptotic_WPS} is deduced in Section \ref{sect:asymptotic_equiv} from the more general (but also more technical)  Theorem \ref{thm:precise}, which is fully normalization-invariant.  The $\O(r_j^{-\infty})$-remainders in Theorem \ref{thm:asymptotic_WPS} are irrelevant in practice -- they only account for the theoretical possibility that after renormalizing $\phi_j,\phi_j'$ it can happen that for some non-zero $u\in \CT(S\mathbf{X}_\Gamma)$ both $W_{\phi_j,\phi'_j}(u)$ and ${\mathrm{PS}}_{\phi_j,\phi_j'}(u)$  decay as $\O(r_j^{-\infty})$.  Then the presence of the $\O(r_j^{-\infty})$-remainders makes the statement of Theorem \ref{thm:asymptotic_WPS} trivial for such $u$. On the other extreme end, our methods of proof show that in the situation of Theorem \ref{thm:asymptotic_WPS}  there is an $M\in \N$ such that for all $u\in \CT(S\mathbf{X}_\Gamma)$ both  $W_{\phi_j,\phi'_j}(u)$ and ${\mathrm{PS}}_{\phi_j,\phi_j'}(u)$ are $\O(r_j^M)$. In summary, Theorem \ref{thm:asymptotic_WPS} is meaningful within the range of all normalizations making the Wigner and the Patterson-Sullivan distributions grow at most polynomially or decay at most inverse-polynomially. 
\end{rem}

Theorem \ref{thm:asymptotic_WPS} can be interpreted as an asymptotic relation between a quantum object on the left-hand side and a classical object on the right-hand side. Their asymptotic equivalence is then in accordance with the \emph{correspondence principle} from quantum physics, which says that in the high energy limit the quantization of a classical system should exhibit emergent features resembling those of the classical system. 

The asymptotic relation from Theorem \ref{thm:asymptotic_WPS} is useful in the context of quantum ergodicity, where the problem consists of determining which geodesic flow invariant probability measures arise as weak${}^*$-limit points of Wigner distributions. Note that in the setting of hyperbolic manifolds with funnels, the quantum (unique) ergodicity problem has already been solved, we refer to Dyatlov's survey paper \cite[Sec.~3.2.2.]{Dyatlov22} for further details on this topic.

The asymptotic equivalence between Wigner and Patterson-Sullivan distributions can certainly be generalized beyond the case of convex-cocompact hyperbolic surfaces. The latter provides a convenient setting in which all necessary technical ingredients of the proof such as the quantum-classical correspondence from \cite{GHWb} and the spectral estimates from \cite{GuillopeZworski} were readily available, which makes the proof of Theorem \ref{thm:asymptotic_WPS} relatively short and direct. Therefore we restrict in this paper to the convex-cocompact hyperbolic surfaces. We plan to carry out generalizations in separate future works.  Note that Hadfield \cite{Ha20} established a classical-quantum correspondence for open hyperbolic manifolds.

Furthermore, when the resonances are simple, the Patterson–Sullivan distributions coincide with the invariant Ruelle distributions of \cite{GHWb}. Extending this coincidence to resonances of higher (finite) rank requires a pairing formula; to our knowledge, such a formula has not yet been established in the convex-cocompact setting. Schütte and Weich \cite{SchutteWeich2023} showed that invariant Ruelle distributions distributions can be numerically visualized for convex-cocompact hyperbolic surfaces, in particular for two specific classes of rank-two Schottky surfaces, by approximating the distributions via weighted zeta functions.

Finally, let us mention that instead of considering asymptotics of quantum resonances, an alternative approach in the non-compact case is to study the continuous spectrum of the Laplacian and the associated Eisenstein functions, see \cite{GNeisenstein,DGeisenstein,Ingremeau_eisenstein}. 

\subsection{Differences to the compact case }\label{sec:cmptdiff}

 Here we give more details and introduce some required language related to the difference between our non-compact convex-cocompact case and the compact case studied in \cite{AZ07,AZintertwining,HHS12}.

\subsubsection{Unbounded sequences of quantum resonances}\label{sec:parameter} 
 Every non-compact convex-co\-com\-pact hyperbolic surface has an essential spectral gap \cite{BourgainDyatlov}, which means that there is an $\eps>0$ such that there are only finitely many quantum resonances $s_0$ with $\Re s_0> \frac{1}{2}-\eps$. The size of the essential spectral gap, given by the supremum $\eps_\mathrm{max}$ of all such $\eps$, is the subject of the famous \emph{Jakobson-Naud conjecture} \cite{JakobsonNaud}, which postulates that $\eps_\mathrm{max}=\frac{1-\delta}{2}$, where $\delta\in [0,1]$ is the Hausdorff dimension of the limit set $\Lambda_\Gamma\subset \mathbb{S}^1$ of $\Gamma$ (the set of all accumulation points of $\Gamma$-orbits in the compactified Poincaré disk $\mathbb D^2\cup \mathbb S^1$). Thus, if the conjecture is true, we can take any $C>\frac{1-\delta}{2}$ in Theorem \ref{thm:asymptotic_WPS}, see Figure \ref{fig:condition}. 

  \begin{figure}[h!]
        \centering
        \begin{tikzpicture}
        \fill [lightgray!20] (-1.2,3) rectangle (3.5,-3);
             \fill [blue!10] (1,3) rectangle (1.5,-3);
\draw[->] (-2.5,0)--(4.5,0) node[right]{$\Re(s)$};
\draw[->] (0,-3)--(0,3) node[left]{$\Im(s)$};
\draw[-,dashed,thick] (3.5,-3)--(3.5,3);
\draw[-,dashed,thick] (1.5,-3)--(1.5,3);
\draw[-,thick] (1.5,-0.125)--(1.5,0.125);
\draw[-,thick] (3.5,-0.125)--(3.5,0.125);
\draw[-,thick] (-1,-0.125)--(-1,0.125);
\draw (3.75,-0.35) node {$\frac{1}{2}$};
\draw (-1,-0.35) node {$-1$};
\draw (1.7,-0.35) node {$\frac{\delta}{2}$};
\draw (0.3,-0.3) node {$0$};

\draw[blue,fill=blue] (-0.5,2.2) circle (1.5pt);
\draw[blue,fill=blue] (-1.2,2.45) circle (1.5pt);
\draw[blue,fill=blue] (-0.3,2) circle (1.5pt);
\draw[blue,fill=blue] (-0.9,1.1) circle (1.5pt);
\draw[blue,fill=blue] (-0.4,0.65) circle (1.5pt);
\draw[blue,fill=blue] (-0.6,0.15) circle (1.5pt);
\draw[blue,fill=blue] (-0.5,-2.2) circle (1.5pt);
\draw[blue,fill=blue] (-1.2,-2.45) circle (1.5pt);
\draw[blue,fill=blue] (-0.3,-2) circle (1.5pt);
\draw[blue,fill=blue] (-0.9,-1.1) circle (1.5pt);
\draw[blue,fill=blue] (-0.4,-0.65) circle (1.5pt);
\draw[blue,fill=blue] (-0.6,-0.15) circle (1.5pt);

\draw[blue,fill=blue] (0.5,2.8) circle (1.5pt);
\draw[blue,fill=blue] (1.2,2.85) circle (1.5pt);
\draw[blue,fill=blue] (1.3,2.5) circle (1.5pt);
\draw[blue,fill=blue] (0.9,2.1) circle (1.5pt);
\draw[blue,fill=blue] (0,2.15) circle (1.5pt);
\draw[blue,fill=blue] (0,-2.15) circle (1.5pt);
\draw[blue,fill=blue] (0.6,1.8) circle (1.5pt);
\draw[blue,fill=blue] (0.9,1.5) circle (1.5pt);
\draw[blue,fill=blue] (0.7,1.1) circle (1.5pt);
\draw[blue,fill=blue] (0.3,0.9) circle (1.5pt);
\draw[blue,fill=blue] (0.8,0.5) circle (1.5pt);
\draw[blue,fill=blue] (0,0.55) circle (1.5pt);
\draw[blue,fill=blue] (0,-0.55) circle (1.5pt);
\draw[blue,fill=blue] (0.5,0) circle (1.5pt);
\draw[blue,fill=blue] (1.5,1.2) circle (1.5pt);
\draw[blue,fill=blue] (1.3,0.85) circle (1.5pt);
\draw[blue,fill=blue] (1.7,0.75) circle (1.5pt);
\draw[blue,fill=blue] (0.5,0) circle (1.5pt);
\draw[blue,fill=blue] (1,0) circle (1.5pt);
\draw[blue,fill=blue] (0.5,-2.8) circle (1.5pt);
\draw[blue,fill=blue] (1.2,-2.85) circle (1.5pt);
\draw[blue,fill=blue] (1.3,-2.5) circle (1.5pt);
\draw[blue,fill=blue] (0.9,-2.1) circle (1.5pt);
\draw[blue,fill=blue] (0.6,-1.8) circle (1.5pt);
\draw[blue,fill=blue] (0.9,-1.5) circle (1.5pt);
\draw[blue,fill=blue] (0.7,-1.1) circle (1.5pt);
\draw[blue,fill=blue] (0.3,-0.9) circle (1.5pt);
\draw[blue,fill=blue] (0.8,-0.5) circle (1.5pt);
\draw[blue,fill=blue] (1.5,-1.2) circle (1.5pt);
\draw[blue,fill=blue] (1.3,-0.85) circle (1.5pt);
\draw[blue,fill=blue] (1.7,-0.75) circle (1.5pt);

\draw[blue,fill=blue] (-1.5,2.4) circle (1.5pt);
\draw[blue,fill=blue] (-2.2,2.74) circle (1.5pt);
\draw[blue,fill=blue] (-2.3,2.58) circle (1.5pt);
\draw[blue,fill=blue] (-1.9,2.12) circle (1.5pt);
\draw[blue,fill=blue] (-2.1,1.87) circle (1.5pt);
\draw[blue,fill=blue] (-2.3,1.53) circle (1.5pt);
\draw[blue,fill=blue] (-1.7,1.13) circle (1.5pt);
\draw[blue,fill=blue] (-1.3,0.95) circle (1.5pt);
\draw[blue,fill=blue] (-1.8,0.4) circle (1.5pt);
\draw[blue,fill=blue] (-1.5,0.2) circle (1.5pt);
\draw[blue,fill=blue] (-1.5,-2.4) circle (1.5pt);
\draw[blue,fill=blue] (-2.2,-2.74) circle (1.5pt);
\draw[blue,fill=blue] (-2.3,-2.58) circle (1.5pt);
\draw[blue,fill=blue] (-1.9,-2.12) circle (1.5pt);
\draw[blue,fill=blue] (-2.1,-1.87) circle (1.5pt);
\draw[blue,fill=blue] (-2.3,-1.53) circle (1.5pt);
\draw[blue,fill=blue] (-1.7,-1.13) circle (1.5pt);
\draw[blue,fill=blue] (-1.3,-0.95) circle (1.5pt);
\draw[blue,fill=blue] (-1.8,-0.4) circle (1.5pt);
\draw[blue,fill=blue] (-1.5,-0.2) circle (1.5pt);

\draw[blue,fill=blue] (2,0.3) circle (1.5pt);
\draw[blue,fill=blue] (2.8,0) circle (1.5pt);
\draw[blue,fill=blue] (2.5,1) circle (1.5pt);
\draw[blue,fill=blue] (2.1,1.3) circle (1.5pt);
\draw[blue,fill=blue] (2,-0.3) circle (1.5pt);
\draw[blue,fill=blue] (2.5,-1) circle (1.5pt);
\draw[blue,fill=blue] (2.1,-1.3) circle (1.5pt);

\node[anchor=north east, inner sep=0] at (6,3) {%
    \includegraphics[width=2.3cm]{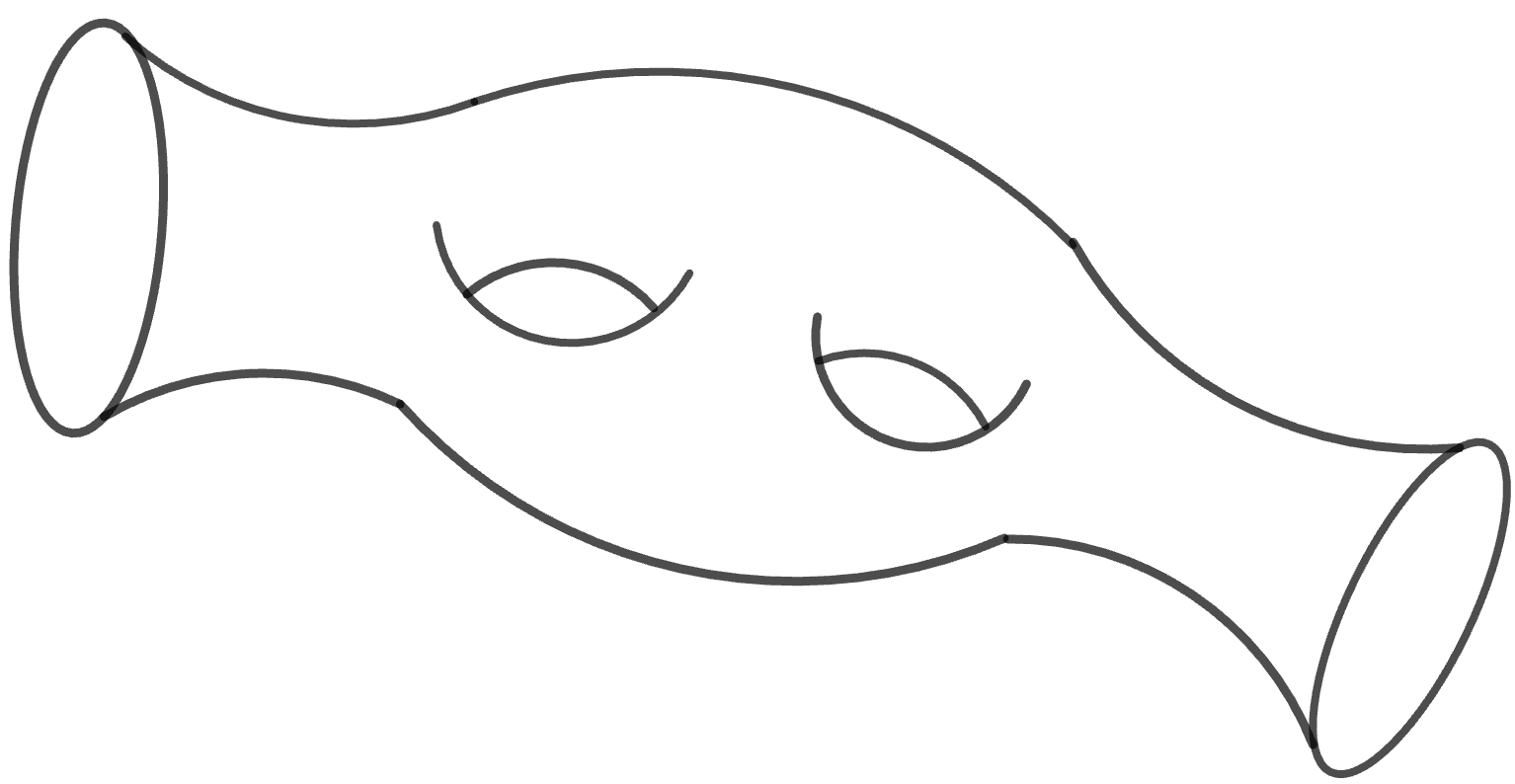} 
};
\end{tikzpicture}\hspace{2em}
\begin{tikzpicture}
\draw[->] (-2.5,0)--(1,0) node[right]{$\Re(s)$};
\draw[->] (0,-3)--(0,3) node[left]{$\Im(s)$};
\draw[-,thick] (-1,-0.125)--(-1,0.125);
\draw[-,thick] (-2,-0.125)--(-2,0.125);
\draw (0.3,-0.3) node {$0$};
\draw (-0.7,-0.3) node {$-1$};
\draw (-1.7,-0.3) node {$-2$};

\draw[blue,fill=blue] (0,1.5) circle (1.5pt);
\draw[blue,fill=blue] (0,2) circle (1.5pt);
\draw[blue,fill=blue] (0,2.5) circle (1.5pt);
\draw[blue,fill=blue] (0,3) circle (1.5pt);
\draw[blue,fill=blue] (0,1) circle (1.5pt);
\draw[blue,fill=blue] (0,0.5) circle (1.5pt);
\draw[blue,fill=blue] (0,0) circle (1.5pt);
\draw[blue,fill=blue] (0,-1.5) circle (1.5pt);
\draw[blue,fill=blue] (0,-2) circle (1.5pt);
\draw[blue,fill=blue] (0,-2.5) circle (1.5pt);
\draw[blue,fill=blue] (0,-3) circle (1.5pt);
\draw[blue,fill=blue] (0,-1) circle (1.5pt);
\draw[blue,fill=blue] (0,-0.5) circle (1.5pt);

\draw[blue,fill=blue] (-1,1.5) circle (1.5pt);
\draw[blue,fill=blue] (-1,2) circle (1.5pt);
\draw[blue,fill=blue] (-1,2.5) circle (1.5pt);
\draw[blue,fill=blue] (-1,3) circle (1.5pt);
\draw[blue,fill=blue] (-1,1) circle (1.5pt);
\draw[blue,fill=blue] (-1,0.5) circle (1.5pt);
\draw[blue,fill=blue] (-1,0) circle (1.5pt);
\draw[blue,fill=blue] (-1,-1.5) circle (1.5pt);
\draw[blue,fill=blue] (-1,-2) circle (1.5pt);
\draw[blue,fill=blue] (-1,-2.5) circle (1.5pt);
\draw[blue,fill=blue] (-1,-3) circle (1.5pt);
\draw[blue,fill=blue] (-1,-1) circle (1.5pt);
\draw[blue,fill=blue] (-1,-0.5) circle (1.5pt);

\draw[blue,fill=blue] (-2,1.5) circle (1.5pt);
\draw[blue,fill=blue] (-2,2) circle (1.5pt);
\draw[blue,fill=blue] (-2,2.5) circle (1.5pt);
\draw[blue,fill=blue] (-2,3) circle (1.5pt);
\draw[blue,fill=blue] (-2,1) circle (1.5pt);
\draw[blue,fill=blue] (-2,0.5) circle (1.5pt);
\draw[blue,fill=blue] (-2,0) circle (1.5pt);
\draw[blue,fill=blue] (-2,-1.5) circle (1.5pt);
\draw[blue,fill=blue] (-2,-2) circle (1.5pt);
\draw[blue,fill=blue] (-2,-2.5) circle (1.5pt);
\draw[blue,fill=blue] (-2,-3) circle (1.5pt);
\draw[blue,fill=blue] (-2,-1) circle (1.5pt);
\draw[blue,fill=blue] (-2,-0.5) circle (1.5pt);

\node[anchor=north east, inner sep=0] at (2.3,3) {%
    \includegraphics[width=2cm]{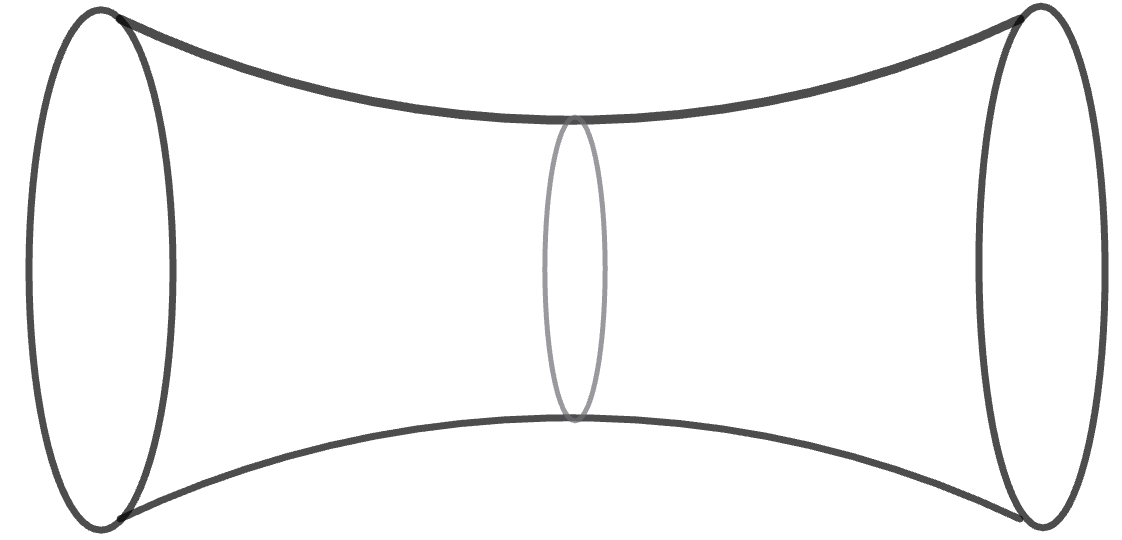} 
};

\end{tikzpicture}
        \caption{ \label{fig:condition} Left: Fictional plot of quantum resonances of a non-elementary convex-cocompact hyperbolic surface that might look like the one illustrated in the picture, for example. If the Jakobson-Naud conjecture holds for this surface, then any strip  of positive width (indicated in blue) at the left of the line $\Re s=\frac{\delta}2$  contains an unbounded sequence of resonances. Independently of the Jakobson-Naud conjecture, the large gray strip of width $>\frac{3}{2}$ at the left of the critical line $\Re s=\frac{1}{2}$ always contains an unbounded sequence of resonances. 
        Right: The elementary case of a hyperbolic cylinder. Here the quantum resonances lie on a lattice.}
    \end{figure}
  Note that if $\delta=0$, then $\Gamma$ is elementary, i.e., $\Gamma$ is trivial or $\Gamma\simeq\Z$ \cite[p.~33]{Borthwickbook}. In the trivial case there are no Patterson-Sullivan distributions (because the non-wandering set of $\varphi_t$, in which they are supported, is empty) and in the case $\Gamma\simeq\Z$ the surface $\Xbf_\Gamma$ is a hyperbolic cylinder whose  resonance spectrum is explicitly given by $\frac{2\pi i}{l}\Z-\N_0\subset \C$ for some $l>0$  \cite[Thm.~2]{ChristiansenZworski}. In general, one wants to choose the constant $C>0$ in Theorem \ref{thm:asymptotic_WPS} not much larger than the essential spectral gap because the quantum resonances with largest real part are the most relevant ones (for example because they dominate wave asymptotics \cite[p.~724]{Naud2014}). 

\begin{figure}[h!]
    \centering
    \begin{overpic}[width=0.7\textwidth]{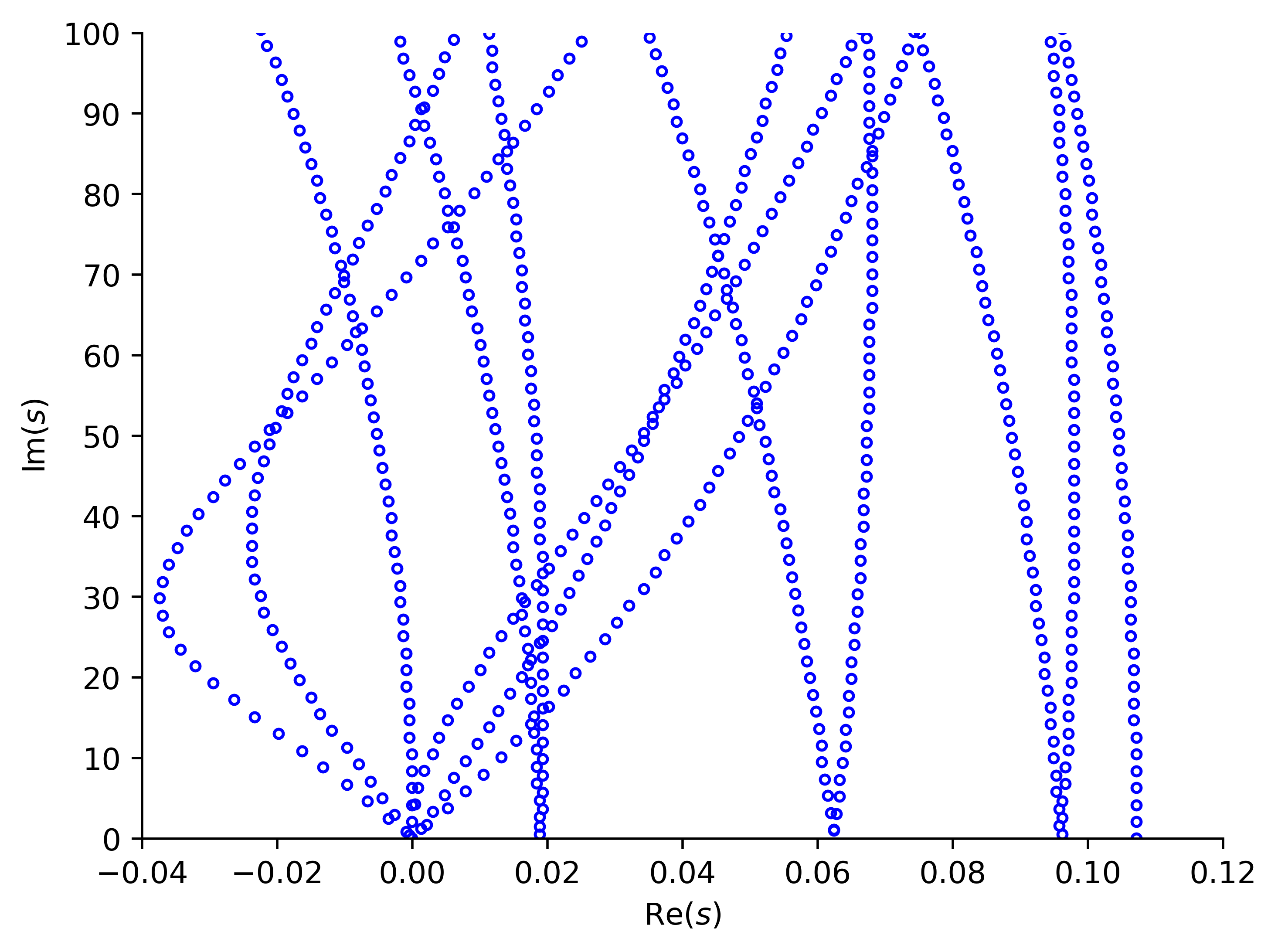}
        \put(90,55){
            \includegraphics[width=0.15\textwidth]{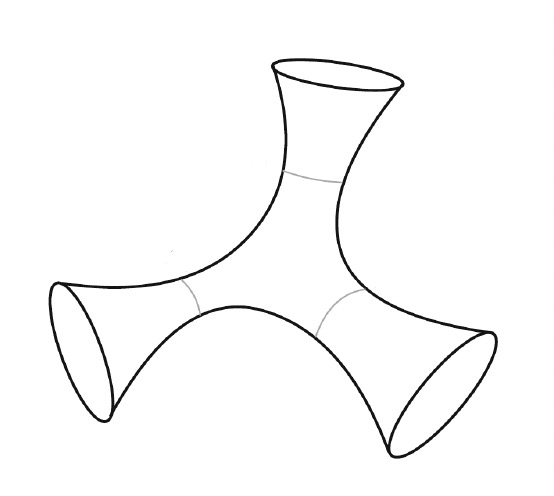}%
        } 
    \end{overpic}
    \caption{ \label{fig:resonances}Plotted are numerically calculated quantum resonances $s_0\in \C$ for a $3$-funneled Schottky surface, following the approach of \cite{Weich2015_ResonanceChains, Borthwickbook}.}
\end{figure}
  
While the Jakobson-Naud conjecture remains open, much weaker sufficient conditions than $C>\frac{3}{2}$ are known in many cases: For example, the results of Jakobson-Naud  \cite[Thm.~1.2]{JakobsonNaud} show that $C>\frac{1-\delta(1-2\delta)}{2}$ works whenever $\delta< \frac{1}{2}$, and that $C>\frac{3-2\delta}{4}$ works in case that $\delta>\frac{1}{2}$ and $\Gamma$ is a subgroup of an arithmetic group. For Schottky surfaces, precise results on a chain-like structure of the quantum resonance spectrum accompanied by numerical calculations \cite{Weich2015_ResonanceChains, BarkhofenFaureWeich2014_ResonanceChains} suggest an abundance of quantum resonances with positive real part in agreement with the Jakobson-Naud conjecture, c.f.\ Figure  \ref{fig:resonances}. 



\subsubsection{Normalization}\label{sec:normalization}

In the case where $\Xbf_\Gamma$ is compact, one considers  sequences $\phi_j,\phi_j'$ of $L^2$-normalized quantum resonant states, which immediately gives that $W_{\phi_j,\phi'_j}(u)=\O(1)$ as $j\to \infty$ for any $u\in \CT(S\Xbf_\Gamma)$. This fixes an ``absolute scale'' with respect to which the growth of the Patterson-Sullivan distributions can be measured. In our Theorem \ref{thm:asymptotic_WPS}, the assumption $W_{\phi_j,\phi'_j}(u)=\O(1)$ is a special case in which the result \eqref{eq:resultnorm1} agrees with the formula from \cite[Thm.\ 5]{AZintertwining} (up to an overall factor of $2\pi$, see footnote on p.~\pageref{thm:asymptotic_WPS}). 

Without such an absolute scale, one can wonder if it even makes sense to compare the Wigner and Patterson-Sullivan distributions asymptotically, as both of them have the same transformation behavior under rescaling: Substituting $(\phi_j,\phi'_j)\mapsto (z_j\phi_j,z_j'\phi'_j)$ with $z_j,z_j'\in \C$, we have $W_{z_j\phi_j,z_j'\phi'_j}=z_j\bar z_j' W_{\phi_j,\phi'_j}$ and $\mathrm{PS}_{z_j\phi_j,z_j'\phi'_j}=z_j\bar z_j' \mathrm{PS}_{\phi_j,\phi'_j}$, so in principle even the slightest difference in the asymptotic behavior of the two distributions can be arbitrarily exaggerated by renormalizing the quantum resonant states. Our methods of proof only allow to meaningfully compare the Wigner and Patterson-Sullivan distributions (even relative to each other) under a ``sanity condition'' which we describe in the following:

Fix $C>0$, and for $j\in \N$, let $s_j,s_j'\in \C\setminus (-\frac{1}{2}-\frac{1}{2}\N_0)$ be quantum resonances of the form $s_j=q_j+ir_j$, $s_j'=q_j' -ir_j$, where $r_j\to +\infty$ as $j\to \infty$ and $\frac{1}{2}-C\leq q_j,q'_j\leq\frac{1}{2}$. Furthermore, let $\phi_j\in \mathrm{Res}^1_{\Laplace}(s_j)$ and $\phi_j'\in \mathrm{Res}^1_{\Laplace}(s_j')$ be quantum resonant states. Then, by results of Dyatlov-Guillarmou \cite[Cor.~7.6]{DFG} there is a Sobolev order $-k_C<0$, depending on the constant $C$, such that the Helgason boundary values  $T_{\phi_j},T_{\phi'_j}\in \mathcal{D}'(\partial_\infty \mathbb{H}^2)$  lie in the Sobolev space $H^{-k_C}(\partial_\infty \mathbb{H}^2)$ for all $j\in \N$ (and thus in $H^{-k}(\partial_\infty \mathbb{H}^2)$ for all $k\geq k_C$). 
\begin{definition}\label{def:moderatenormalization} 
We call the sequence $\{(\phi_j,\phi_j')\}_{j\in \N}$ \emph{moderately normalized} if there are constants $N\in \N$ and $k\geq k_C$ such that
 \bq
 \Vert T_{\phi_j}\otimes \overline{T}_{\phi'_j}\Vert_{H^{-k}(\partial_\infty \mathbb{H}^2\times \partial_\infty \mathbb{H}^2)} =\O(r_j^N)\qquad\text{as }j\to \infty.\label{eq:TTestim}
 \eq
\end{definition}
In the compact case, pairs of $L^2$-normalized quantum resonant states are moderately normalized (as would be pairs with polynomially growing $L^2$-norm). This is proved in \cite[Thm.~3.13]{HHS12} using representation theoretic methods and in \cite[Eq.~(3.14)]{AZ07} using a regularity result of Otal \cite{Otal} (see also \cite[Sec.~7.1]{AZintertwining} for more details). 

Of course, in the non-compact case one might wonder if there is some natural choice of moderate normalization of quantum resonant states replacing the $L^2$-normalization. We intend to address this separate question in future works.

\subsection{Structure of the paper} 

In Section~\ref{sect:preliminaries} we begin with an introduction to our geometric setting for convex-cocompact hyperbolic surfaces. 

Section~\ref{sect:classical_quantum} provides the definitions of the Pollicott-Ruelle resonances and the quantum resonances, as well as their classical-quantum correspondence, following the construction of \cite{DG,GHWa,Guillope, MazzeoMelrose}, in our framework.

The final Section~\ref{sect:relation_W_PS} is devoted to the asymptotic relation between the Wigner and Patterson-Sullivan distributions. We first give, in Subsection~\ref{sect:Wigner}, a detailed construction of the Wigner distributions via 
the (weighted) Radon transform and the intertwining operator in our specific context. 
Secondly, in Subsection~\ref{sect:PS}, two descriptions of Patterson-Sullivan distributions are provided. The first description is in terms of resonant and co-resonant states, following the approach outlined in \cite{GHWb}, while the second description is in terms of the (weighted) Radon transform. As demonstrated in \cite{GHWb}, both descriptions are equivalent. 
In the last Subsection~\ref{sect:asymptotic_equiv}, we prove our main result (Theorem~\ref{thm:asymptotic_WPS}) by combining the asymptotic results on both types of distributions.

\subsection{Acknowledgments}
The authors are grateful to Tobias Weich for suggesting this project and valuable feedback, to Yannick Guedes-Bonthonneau for fruitful discussions, to Joachim Hilgert for valuable advice and for explaining many details from the work \cite{HHS12} to us, and to Job Kuit for helpful remarks on some representation theoretic  aspects regarding Helgason boundary values. This work is funded by the Deutsche Forschungsgemeinschaft (DFG, German Research Foundation) – Project-ID 491392403 – TRR 358.
B.D.\ has received further funding from the DFG through the Priority Program (SPP) 2026 “Geometry at Infinity”. 
Finally, we warmly thank an anonymous referee whose suggestions led to corrections and  substantial improvements in this paper.

\section{Preliminaries and background} 
\label{sect:preliminaries}
In this section, we recall a number of geometric preliminaries. Let $G\coloneq\PSL(2,\R)=\{Q\in \GL(2,\R)\,|\, \det Q =1\}/\{\pm \mathrm{Id}\}$ and $$K\coloneq\SO(2)=\bigg\{
    \left[k_\vartheta:=\begin{pmatrix}
        \cos\vartheta & \sin\vartheta \\ -\sin\vartheta & \cos\vartheta
    \end{pmatrix}\right]\,\Big|\,\vartheta\in \R\bigg\}\subset G.$$
\subsection{Poincaré half-plane and disk models}\label{sec:Poincare}
Let us briefly introduce two common geometric models for the hyperbolic plane $\mathbb H^2$ and the $G$-action on it. The \emph{Poincaré half-plane} is  $
\mathcal{H}^2\coloneq\{z\in \C \,|\,\Im z>0\}$ 
equipped with the Riemannian metric
$
   (\Im z)^{-2}\,g_{\R^2},
$
where $g_{\R^2}$ is the Euclidean metric on $\R^2=\C$. The \emph{Poincaré disk} is the open unit disk 
$    \mathbb D^2\coloneq\{z\in \C : |z|<1\}
$
equipped with the Riemannian metric
$
     4(1-|z|^2)^{-2}\,g_{\R^2}.
$
The group $G$ acts isometrically and transitively on $\mathcal{H}^2$ by the Möbius transformations
\[
 \left[\begin{pmatrix}
       a & b \\ c & d
   \end{pmatrix}\right]
    \cdot z= \frac{a z + b}{cz+d}.
\]
The \emph{Cayley transform}
$ \mathcal C: \mathcal{H}^2 \to  \mathbb{D}^2, z\mapsto \frac{z-i}{z+i}$
is an isometry. We transfer the $G$-action to $\mathbb D^2$ by conjugation with $\mathcal C$. The stabilizers of the imaginary element $i\in \mathcal H^2$ and the origin $0=\mathcal C(i)\in \mathbb D^2$ are equal to $K$, respectively, which leads to isomorphisms of Riemannian $G$-manifolds
\[
\mathbb H^2\equiv G/K\cong\mathcal H^2\cong\mathbb D^2.
\]

\subsection{Structure of the Lie group $G=\PSL(2,\R)$}\label{sec:setupnot}
$G$ is a non-compact connected simple Lie group with trivial center.  
On its Lie algebra $\g\coloneq\mathfrak{sl}(2,\R)=\{Y\in \mathfrak{gl}(2,\R)\,|\, \mathrm{tr}\, Y =0 \}$, the Killing form $\mathfrak{B}$ is given by
$\mathfrak{B}(Y,Y') =4\,\mathrm{tr}(YY'),$ for $Y, Y'\in \g.$
Let 
\begin{equation*}
    \theta:
\begin{cases}
    G\to G,& [Q]\mapsto [(Q^{-1})^T]\\
    \g\to \g,& Y\mapsto -Y^T
\end{cases}
\end{equation*}
be the Cartan involutions (denoted by the same symbol). We equip $\g$ (hence also its dual $\g^\ast$ and by linear extension the complexifications of $\g$ and $\g^\ast$) with the inner product
\begin{equation} \label{eq:innerprod}
    \eklm{Y,Y'}\coloneq-\frac{1}{2}\mathfrak{B}(Y,\theta Y') = 2\,\mathrm{tr}(Y(Y')^T),\;\;Y, Y'\in \g.
\end{equation}
Let us introduce the matrices in $\g$
\begin{equation*}
\begin{split}
    X\coloneq\begin{pmatrix}\frac{1}{2} & 0 \\ 0 & -\frac{1}{2}\end{pmatrix},\;\;
    V&\coloneq\begin{pmatrix}0 & \frac{1}{2} \\ -\frac{1}{2} & 0\end{pmatrix} = \frac{1}{2}(U_+-U_-),\;\;
    X_\perp\coloneq\begin{pmatrix}0 & \frac{1}{2} \\ \frac{1}{2} & 0 \end{pmatrix}= \frac{1}{2}(U_++U_-),\\
    U_+&\coloneq\begin{pmatrix}0 & 1 \\ 0 & 0\end{pmatrix} \;\; \text{ and }\;\;  U_- \coloneq\begin{pmatrix}0 & 0 \\ 1 & 0\end{pmatrix}.
\end{split}
\end{equation*}
Then $\{X,X_\perp,V\}$ is an orthonormal basis of $\g$ and the above matrices have the Lie brackets
\bq \begin{split} \label{eq:Liebrackets}
    [X,V]&=X_\perp, \qquad\quad [X,X_\perp]=V,\qquad [V,X_\perp]=X.\\
    [X,U_\pm]&=\pm  U_\pm,\qquad [U_+,U_-]=2X.
    \end{split}
\eq
We have the Cartan decomposition $G=K{\exp}({\p})$ with ${\p}\coloneq\mathrm{span}_\R\{X,X_\perp\}$, 
the Iwasawa decomposition $G=K A N_{+}$ with
\bqn
    A\coloneq\bigg\{\left[\begin{pmatrix}e^t & 0 \\ 0 & e^{-t}
    \end{pmatrix}\right]\,\Big|\,t\in \R\bigg\}, \qquad N_{+}\coloneq\bigg\{\left[\begin{pmatrix}
        1 & x \\ 0 & 1
    \end{pmatrix}\right]\,\Big|\,x\in \R\bigg\},
\eqn
as well as the associated infinitesimal Iwasawa decomposition $\g={\k}\oplus {\aL}\oplus {\nL}_{+}$, where ${\k}\coloneq\R V,{\aL}\coloneq\R X$ and ${\nL}_{+}\coloneq\R U_+$.
The Iwasawa decomposition defines analytic functions
\bq \label{eq:Iwasawafcnbreve}
    \kappa:G\to K,\qquad {\mathscr{A}}:G\to{\aL},\qquad         {\mathscr{N}}:G\to{\nL}_{+}
\eq
such that $g=\kappa(g){\exp}({\mathscr{A}}(g)){\exp}({\mathscr{N}}(g))$ for all $g\in G$. Explicitly, if
$ g=\left[\begin{pmatrix}
        a & b\\
        c & d
    \end{pmatrix}\right] \in~G,$ $a,b,c,d \in \R$,
and we write $r\coloneq\sqrt{a^2+c^2}>0$ (recalling that $ad-bc=1$), then
\bq \label{eq:Iwasawaexplicit}
    \kappa(g) =
    \left[\begin{pmatrix}
        \frac{a}{r} & -\frac{c}{r}\\
        \frac{c}{r} & \frac{a}{r}
    \end{pmatrix}\right],
    \qquad \mathscr{A}(g)=
    \begin{pmatrix}
        \log r & 0\\
        0 & -\log r
    \end{pmatrix},
    \qquad \mathscr{N}(g)=
    \begin{pmatrix}
        0 & \frac{ab+cd}{r}\\
        0 & 0
    \end{pmatrix}.
\eq
Furthermore, we define $ {\nL}_{-}\coloneq\theta {\nL}_{+}=\R U_-$
so that we have the Bruhat decomposition of the Lie algebra
\bq  \label{eq:gBruhat}
    \g =  \nL_{+}\oplus\aL \oplus\nL_{-}.
\eq
The centralizer $Z_{K}(\aL)=\{k\in K\,|\,\Ad(k)\aL=0\}$ is trivial and the normalizer $N_{K}(\aL)=\{k\in K\,|\,\Ad(k)\aL\subset \aL\}$ is the group  generated by the identity $e=[I]$ and the element 
\bq \label{eq:brevew0}
    w_0\coloneq
    \left[\begin{pmatrix}
        0 & 1 \\ -1 & 0
    \end{pmatrix}\right]
    \in N_{K}(\aL).
\eq
Thus the Weyl group is given by
$
    W=\frac{N_{K}(\aL)}{Z_{K}(\aL)}=\{e,w_0\}.
$
Denote by $\alpha\in  \aL^\ast$ the positive restricted root of $\g$ with ${\nL}_{+}=\g_{\alpha}=\{Y\in \g\,|\,[X,Y]=\alpha(X)Y\}$, {and $\Sigma^+$ its set of positive restricted roots.}
Then by \eqref{eq:Liebrackets} we have
\bq \label{eq:brevealpha0}
    \alpha(X)=1.
\eq
Finally, we define the special element
\bq
\label{eq:rho}
    \varrho\coloneq\frac{1}{2}(\dim\, \g_{\alpha}) \alpha 
    =\frac{1}{2}\alpha\in \aL^\ast
\eq
Note that in view of \eqref{eq:Iwasawaexplicit} and \eqref{eq:brevealpha0} we have the explicit formula
\bq
\alpha(\mathscr{A}(g))=\log(a^2+c^2),\qquad g=\left[\begin{pmatrix}
        a & b\\
        c & d
    \end{pmatrix}\right]\in G. \label{eq:alphaA}
\eq


\subsubsection{Left invariant vector fields and tangent bundle of quotient manifolds}

Every Lie algebra element $Y\in \g$ can be considered as a left invariant vector field on $G$ denoted again by $Y$, defined by
\[
    Y_g(f)\coloneq\frac{d}{dt}\Big|_{t=0}f(g\exp(tY)),\qquad f\in \Cinft(G),g\in G.
\]
This fixes a trivialization
\bq
    TG=G\times \g\label{eq:identifTG}
\eq
of the tangent bundle of the Lie group $G$. Thus smooth vector fields on $G$ are identified with smooth functions $G\to \g$. Furthermore, the identification \eqref{eq:identifTG} has the important property that for every $g\in G$ the differential $d(\cdot g):TG\to TG$ of the right multiplication map $G\to G$, $g'\mapsto g'g$, corresponds to the map
\begin{align}\begin{split}
    G\times \g &\to G\times \g,\\
    (g',Y)&\mapsto (g'g,\Ad(g^{-1})Y).\label{eq:Addiff}\end{split}
\end{align}
Here we denote by $\Ad(g):\g\to \g$ the adjoint action of a Lie group element $g$, which in our setting is simply given by matrix conjugation:
\[
    \Ad(g)Y=QYQ^{-1}\qquad \forall g=[Q]\in G,\; Y\in \g.
\]

\subsection{The hyperbolic plane as a symmetric space}\label{sec:symmspace}

Motivated by Section \ref{sec:Poincare} we regard the hyperbolic plane as the quotient $     \mathbb H^2= G/K=\PSL(2,\R)/\SO(2)$
which we equip with the $G$-action induced by left multiplication. The Cartan decomposition $G=K\exp(\p)$ and the observation \eqref{eq:Addiff} provide an identification 
\bq
    T\mathbb H^2=G\times_{\Ad(K)} \p,\label{eq:TGK}
\eq
where the right-hand side is the associated vector bundle defined by the representation of $K$ on $\p$ given by the restricted adjoint action. It is defined by
$
G\times_{\Ad(K)} \p=(G\times \p)/\sim$, where for $g,g' \in G,$ $Y,Y'\in \p$ the equivalence $(g,Y)\sim (g',Y')$ means that there exists $k\in K$ such that $ g'=gk,Y'=\Ad(k^{-1})Y$. 
The bundle projection is given by $[g,Y]\mapsto gK$. With respect to the identification \eqref{eq:TGK} the $G$-invariant Riemannian metrics on $\mathbb H^2$ are in one-to-one correspondence with the $\Ad(K)$-invariant inner products on $\p$. Of the latter, we choose the restriction of the inner product \eqref{eq:innerprod} to $\p$, which fixes a Riemannian metric of constant curvature $-1$ on $\mathbb H^2$. 
By taking differentials, the $G$-action on $\mathbb H^2$ lifts to the tangent bundle $T\mathbb H^2$. With respect to the identification \eqref{eq:TGK} this lifted action is simply given by $g\cdot [g',Y]=[gg',Y]$ for $g\in G$, $[g',Y]\in G\times_{\Ad(K)} \p$. 
Finally, fixing an orientation of the vector space $\p$ provides an orientation of the manifold $\mathbb H^2= G/K$ in view of \eqref{eq:TGK}.
\subsubsection{The unit sphere bundle}\label{sec:unitspherebundle}
Let $S\mathbb H^2\subset T\mathbb H^2$ be the Riemannian unit sphere bundle given by all tangent vectors of length $1$. Denote by $\pi:S\mathbb H^2\to\mathbb H^2$ the bundle projection.  Let $\mathbb{S}^1_\p\subset \p$ be the circle formed by all Lie algebra elements $Y\in \p$ with $\norm{Y}=1$. Then the identification \eqref{eq:TGK} restricts to an identification
\bq
 S\mathbb H^2=G\times_{\Ad(K)} \mathbb{S}^1_\p,\label{eq:identSH}
\eq
the associated bundle on the right-hand side being defined similarly as in \eqref{eq:TGK}. By a straightforward computation, one verifies that the $G$-action on  $S\mathbb H^2$ is free and transitive. Choosing the base point $[e,X]\in G\times_{\Ad(K)} \mathbb{S}^1_\p$, we obtain a $G$-equivariant diffeomorphism
\bq
S\mathbb H^2\cong G, \label{eq:GSH2diffeo}
\eq
where $G$ acts on itself by left multiplication. Thus, as a $G$-manifold, the unit sphere bundle $S\mathbb H^2$ is just the Lie group $G$. This allows for an efficient algebraic description of many geometric and dynamical objects of our interest.
\begin{lem}\label{lem:fiberdiffeo}For each point $x=gK\in \mathbb H^2$ the image of the fiber  $S_x\mathbb H^2$ under the $G$-equivariant diffeomorphism $S\mathbb H^2\cong G$ fixed in \eqref{eq:GSH2diffeo} is given by the set $gK\subset G$.
\end{lem}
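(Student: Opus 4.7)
The plan is to unwind the identification in \eqref{eq:GSH2diffeo} directly and observe that the bundle projection $\pi$ is compatible with the projection $G\to G/K=\mathbb H^2$. Using the description of the $G$-action on $S\mathbb H^2=G\times_{\Ad(K)} S^1_\p$ given just before the lemma, namely $g\cdot [g',Y]=[gg',Y]$, the diffeomorphism $\Phi:G\xrightarrow{\cong} S\mathbb H^2$ determined by sending $e$ to the basepoint $[e,X]$ is simply
\[
\Phi(g')=g'\cdot[e,X]=[g',X],\qquad g'\in G.
\]

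The second step is to compute the bundle projection on this image. The projection of the associated bundle $G\times_{\Ad(K)}S^1_\p\to G/K=\mathbb H^2$ sends $[g',Y]\mapsto g'K$, and is well-defined because the equivalence relation multiplies the first factor from the right by $K$. Hence $\pi(\Phi(g'))=g'K$ for every $g'\in G$.

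Combining these two observations, the preimage of the fiber under $\Phi$ reads
\[
\Phi^{-1}(S_x\mathbb H^2)=\Phi^{-1}(\pi^{-1}(x))=\{g'\in G\,|\,g'K=gK\}=gK,
\]
which is precisely the claim. Since $\Phi$ is a bijection by construction, no separate surjectivity argument on the fiber is needed; the only thing one must check is the identity $\pi\circ\Phi(g')=g'K$, which is immediate from the definition of the associated bundle projection.

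There is essentially no difficult step here: the lemma is a pure unpacking of the identifications \eqref{eq:identSH} and \eqref{eq:GSH2diffeo}. The only potential point of care is to remember that the basepoint used to set up \eqref{eq:GSH2diffeo} is $[e,X]$ with $X\in\p$, so that $\Phi$ does take values in the associated bundle form $[g',X]$ and the bundle projection applied to it yields the coset $g'K$ without any further manipulation in the fiber coordinate.
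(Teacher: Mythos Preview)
Your proof is correct and follows essentially the same approach as the paper: both unpack the identification \eqref{eq:identSH} and the choice of basepoint $[e,X]$ in \eqref{eq:GSH2diffeo}. The only cosmetic difference is that the paper phrases the argument via the transitivity of the $\Ad(K)$-action on $S^1_\p$ (which directly shows every $[gk,Y]$ in the fiber equals some $[gk',X]$), whereas you instead invoke the already-established bijectivity of $\Phi$ and compute $\pi\circ\Phi(g')=g'K$; since that bijectivity itself rests on the same transitivity, the two arguments are equivalent.
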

\begin{proof}This is an immediate consequence of the identification $S\mathbb H^2=G\times_{\Ad(K)} \mathbb{S}^1_\p$ from \eqref{eq:identSH} and the fact that the $K$-action on $\mathbb{S}^1_\p$ is transitive.
\end{proof}

\subsubsection{The geodesic flow}\label{sec:geodesicflow}

In view of the identification $TG=G\times \g$ fixed in \eqref{eq:identifTG} and the identification $G=S\mathbb H^2$ from \eqref{eq:GSH2diffeo} the Bruhat decomposition \eqref{eq:gBruhat} becomes a splitting of the tangent bundle of $S\mathbb H^2$ into flow, stable (+) and unstable ($-$) bundles:
\bq
    T (S\mathbb H^2)=E_0\oplus E_+\oplus E_-,\qquad 
     E_0=G\times \aL,\qquad E_\pm= G\times \nL_{\pm}.\label{eq:splitting}
\eq
We define the dual splitting 
$T^*(S\mathbb H^2)=E_0^\ast\oplus E_+^\ast\oplus E_-^\ast$ by the fiber-wise relations
$$
    E^\ast_0(E_+\oplus E_-)=0,\qquad E^\ast_\pm(E_0\oplus E_\pm)=0.\label{eq:dualsplitting}
$$
The left invariant vector field $X$ is the geodesic vector field on $S\mathbb H^2=G$. Equivalently, the geodesic flow $\varphi_t:S\mathbb H^2\to S\mathbb H^2$ is given by the right-multiplication
\[
\varphi_t(g)=g \exp(tX).
\]
Recalling \eqref{eq:Addiff}, this means that the derivative $d\varphi_t:G\times \g\to G\times \g$ is given by
\begin{align*}
   d\varphi_t(g,Y)&=(g \exp(tX),\Ad(\exp(-tX))Y)
   =(g \exp(tX),e^{-t\;\mathrm{ad}_{X}}Y),
\end{align*}
where the endomorphism $\mathrm{ad}_{X}:\g\to \g$ is given by the Lie bracket $\mathrm{ad}_{X}(Y)=[X,Y]$. Since $\aL=\R X$ and $\nL_{\pm}=\R U_\pm$, we see from the Lie brackets \eqref{eq:Liebrackets} that $d\varphi_t$ leaves each of the subbundles $E_0,E_\pm$ occurring in \eqref{eq:splitting} invariant and acts on them according to
\bq
d\varphi_t(g,Y)=\begin{cases}
    (g \exp(tX),Y), & Y \in \aL,\\
    (g \exp(tX),e^{\mp t}Y), & Y \in \nL_{\pm}.\label{eq:hyperbolicaction}
\end{cases}
\eq

\subsubsection{Measures and the natural pushforward}\label{sec:measures}

Let $d\mu_L$ be the Liouville measure on $S\mathbb H^2$, where we identify the latter with the contact submanifold $S^\ast \mathbb H^2\subset T^\ast \mathbb H^2$ using the Riemannian metric. Then $d \mu_L$ agrees with the Riemannian measure defined by the Sasaki metric on $S\mathbb H^2$. Furthermore, let $d x$ be the Riemannian measure on $\mathbb H^2$. Then each fiber $S_x\mathbb H^2$ of $S\mathbb H^2$ carries a smooth measure $dS_x$ characterized by the property that
\bq
\int_{S\mathbb H^2}f(x,\xi)\d\mu_L (x,\xi) = \int_{\mathbb H^2} \int_{S_x\mathbb H^2}f(x,\xi)\d S_x(\xi)\d x \qquad \forall\,f\in C_c(G).\label{eq:measuresgeometric}
\eq
Explicitly, $dS_x$ is the Sasaki-Riemannian measure on $S_x\mathbb H^2$. 

All of this has an analogous algebraic description: Identifying $G=S\mathbb H^2$ as $G$-spaces as explained above, we can choose a Haar measure $dg$ on $G$ such that $dg=d\mu_L$. Let $dk$ be the Haar measure on $K$ with $\mathrm{vol}\, K=2\pi$. Passing to the coset notation $x=gK$ for points in $G/K=\mathbb H^2$, the measures $d(gK)=dx$ and $dk$ satisfy for all $f\in C_c(G)$
\begin{align}\label{eq:measureGK}
 \int_G f(g)\d g&=  \int_{G/K}\int_K f(gk) \d k \d (gK),
\end{align}
c.f.\ \cite[Thm.\ 8.36]{knapp88}. Combining this with Lemma \ref{lem:fiberdiffeo}, we see that \eqref{eq:measureGK} is the algebraic version of the geometric integration formula \eqref{eq:measuresgeometric}. Note that the measure $d\mu_L=dg$ is invariant under the geodesic flow $\varphi_t$.

We choose the Haar measure $da$ on the abelian group $A$ in such a way that it corresponds to $(2\pi)^{-1/2}$ times the Lebesgue measure under the diffeomorphism $\R\to A$, $t\mapsto \exp(tX)$.  Then, in accordance with \cite[Eq.~(2.15)]{HHS12}, we choose the Haar measure $dn$ of the abelian group $N_+$ in such a way that we have for all $f\in C_c(\mathbb H^2)$:
\bq
\int_{\mathbb H^2} f(x) \d x = \int_A \int_{N_+} f(an\cdot o) \d n \d a, \label{eq:dxdadn}
\eq
where $o=K\in G/K=\mathbb H^2$ is the canonical base point. One then computes that $dn$ corresponds to $(\sqrt{2}\pi)^{-1}$ times the Lebesgue measure under the diffeomorphism $\R\to N_+$, $t\mapsto \exp(tU_+/\sqrt{2})$, where $\frac{1}{\sqrt{2}}U_+\in \nL_+$ has unit length.

 

\subsubsection{Boundary at infinity and Poisson transform}\label{sec:bdry}

As before we identify $G=S\mathbb H^2$. The Iwasawa projection $\kappa:G\to K$ identifies the circle $K=\SO(2)\cong \mathbb{S}^1$ with the \emph{(geodesic/visual) boundary at infinity} $\partial_\infty \mathbb H^2$ of the Riemannian symmetric space $\mathbb H^2$  given by the equivalence classes of all geodesic rays $r:[0,\infty)\to \mathbb H^2$ emanating from a common base point $r(0)$, where two rays $r,r'$ are equivalent if $\sup_{t\geq 0}|r(t)-r'(t)|<\infty$. Indeed, the Iwasawa decomposition and the explicit formula \eqref{eq:hyperbolicaction} imply that, in the unit tangent bundle, the geodesic rays $\{\varphi_t(g)\}_{t\geq 0}$, $\{\varphi_t(g')\}_{t\geq 0}$ starting at two arbitrary points $g,g'\in G=S\mathbb H^2$ in positive time stay at bounded distance from each other if, and only if, the same holds for the geodesic rays $\{\varphi_t(\kappa(g))\}_{t\geq 0}$, $\{\varphi_t(\kappa(g'))\}_{t\geq 0}$ starting at $\kappa(g)$ and $\kappa(g')$, respectively. However, by Lemma \ref{lem:fiberdiffeo} the elements $\kappa(g),\kappa(g')\in K\subset G=S\mathbb H^2$ are tangent vectors in the fiber $S_{o}\mathbb H^2$ over the canonical base point $o=K\in G/K=\mathbb H^2$ and an explicit computation reveals that their positive rays $\{\varphi_t(\kappa(g))\}_{t\geq 0}$, $\{\varphi_t(\kappa(g'))\}_{t\geq 0}$ remain at a bounded distance from each other if, and only if, $\kappa(g)=\kappa(g')$. In total, this gives us identifications
$$
K=S_{o}\mathbb H^2=\partial_\infty \mathbb H^2. 
$$
Moreover, the $G$-action on $\partial_\infty \mathbb H^2=K$ extending the isometric $G$-action on $\mathbb H^2$ in the geodesic compactification $\mathbb H^2\cup \partial_\infty \mathbb H^2$ is given by
\bq
g\cdot k\coloneq\kappa(gk).\label{eq:GactionK}
\eq
We define the initial and end point maps $B_\pm: G=S\mathbb H^2\to \partial_\infty \mathbb H^2=K$ by
\begin{equation} \label{eq:defB}
B_+(g)\coloneq\kappa(g),\qquad B_-(g)\coloneq\kappa(gw_0),
\end{equation}
where $w_0\in N_K(\aL)\subset K$ has been introduced in \eqref{eq:brevew0}. For $g\in S\mathbb H^2$, the boundary points $B_\pm(g)\in \partial_\infty \mathbb H^2$ are represented by the geodesic rays departing from $g$ in forward ($+$) and backward ($-$) time, respectively.  See \cite[(2.1) on p.~1237]{GHWa} for an explicit computation of $B_\pm$ in the Poincaré disk model. By the Iwasawa decomposition, the maps
\bq
F_\pm: S\mathbb H^2\to \mathbb H^2\times\partial_\infty \mathbb H^2,\qquad g\mapsto (gK,B_\pm(g))  \label{eq:Fpm}
\eq
are diffeomorphisms. They provide two useful trivializations of the sphere bundle $S\mathbb H^2$.

There is also an important diffeomorphism
\begin{align}\begin{split}
\psi:G/A &\stackrel{\cong}{\longrightarrow} (\partial_\infty \mathbb H^2)^{(2)}\label{eq:psi}\\
 gA&\longmapsto (B_+(g),B_-(g)),\end{split}
\end{align}
where $(\partial_\infty \mathbb H^2)^{(2)}\coloneq(\partial_\infty \mathbb H^2\times \partial_\infty \mathbb H^2)\setminus\mathrm{diagonal}$, see \cite[Prop.~2.10]{HHS12}.

Let now $\D'(\partial_\infty \mathbb H^2)$ be the topological dual of the space $C_c^{\infty}(\partial_\infty \mathbb H^2)=C^{\infty}(\partial_\infty \mathbb H^2)$ of test functions on the oriented manifold $\partial_\infty \mathbb H^2=K$ equipped with the volume form $dk$. Then $C^\infty(\partial_\infty \mathbb H^2)$  is embedded densely into the space of distributions $\D'(\partial_\infty \mathbb H^2)$ by the sesquilinear integration pairing with respect to $dk$.

\begin{definition}\label{def:Poisson} For $x=gK\in \mathbb H^2=G/K$ and $b=k\in \partial_\infty \mathbb H^2=K$, write
\begin{equation} \label{eq:horocycle_bracket}
    \eklm{x,b}\coloneq-2\varrho(\mathscr A(g^{-1}k))\in \R,
\end{equation}
 using the element $\varrho$ from \eqref{eq:rho}. In terms of matrices, using \eqref{eq:alphaA}, one finds the explicit formula
\bq 
\eklm{x,b}=-\log(s^2+u^2),\qquad x=gK,\;b=k,\; g^{-1}k=\left[\begin{pmatrix}
        s & t\\
        u & v
    \end{pmatrix}\right]. \label{eq:bracketexplicit}
\eq
The \emph{Poisson kernel} is the function $p\in \Cinft(\mathbb H^2\times \partial_\infty \mathbb H^2)$ given by
    \bqn
        p(x,b)\coloneq e^{\eklm{x,b}}.
    \eqn

    The \emph{Poisson-Helgason transform} with parameter $\lambda \in \C$ is the map 
    $\mathcal P_\lambda: \D'(\partial_\infty \mathbb H^2)\to \Cinft(\mathbb H^2)$ given by
   \bqn
        \mathcal P_\lambda (\omega)(x)\coloneq \omega\big(p(x,\cdot)^{1+\lambda}\big),\;\; x \in \mathbb H^2.
     \eqn
\end{definition}
In particular, $\mathcal P_\lambda$ acts on $\Cinft(\partial_\infty \mathbb H^2)\subset \D'(\partial_\infty \mathbb H^2)$ according to
\bq
\mathcal P_\lambda(f)(gK)=\int_{K}p(g{K},k)^{1+\lambda}f(k)\d k\label{eq:Poissononfunctions}.
\eq
{\begin{rem}[Conventions]\label{rem:conventions}
The definition of the bracket $\eklm{x,b}$ given in \eqref{eq:horocycle_bracket} differs from the usual definition of the \textit{horocycle bracket} (as in \cite{HHS12} or \cite[p.~81]{GASS}) by a factor of two. This is because we want our Poisson kernel to be equal to the ``geometer's Poisson kernel'', as used for example in \cite{DFG,GHWa}, which is given by
\bq
p_\mathrm{geom}(x,b):=e^{-\beta(x,b)},
\eq
where $\beta(x,b):=\lim_{t\to +\infty}(d_{\mathbb H^2}(\gamma(t),x)-t)$ is the Busemann function associated to the Riemannian distance $d_{\mathbb H^2}$ of $\mathbb H^2$ (here $\gamma$ is the unique geodesic starting at $x$ with limit $b$). Explicitly, in the Poincaré disk model (see Section \ref{sec:Poincare}), one has
    \bq 
        p_\mathrm{geom}(x,b)=\frac{1-|x|^2}{|x-b|^2}, \quad x \in \mathbb{D}^2,\; b \in \partial_\infty \mathbb{D}^2=\mathbb{S}^1.\label{eq:geometersPoissonkernel}
    \eq
The standard metric on $\mathbb H^2$ with curvature $-1$, as fixed in Section \ref{sec:Poincare}, corresponds to a choice of inner product on $\g$ for which 
\bq
\norm{\varrho}=\frac{1}{2}.\label{eq:normrho}
\eq
This has the effect that in order to achieve the equality $-\beta(x,b)=\eklm{x,b}$, which is equivalent to $p=p_\mathrm{geom}$, one needs to define $\eklm{x,b}$ as in \eqref{eq:horocycle_bracket}.
\end{rem}}
In order to describe the image of the Poisson transformation, we need to introduce spaces of smooth functions with moderate growth since we restrict our attention to spaces of distributions.
For $f\in \Cinft(\mathbb H^2)$ and $r\in [0,\infty)$, define
\[
\norm{f}_r\coloneq\sup_{x\in \mathbb H^2}|f(x)e^{-r d_{\mathbb{H}^2}(o,x)}|\in [0,\infty],
\]
where $d_{\mathbb{H}^2}(\cdot,\cdot)$ is the Riemannian distance on $\mathbb H^2$ and $o=K\in G/K=\mathbb H^2$ is the canonical base point of $\mathbb{H}^2$. Then the space of \emph{smooth functions of moderate growth} on $\mathbb H^2$ is defined as
\[
C^\infty_\mathrm{mod}(\mathbb H^2)\coloneq\bigcup_{r\in [0,\infty)}\{f\in \Cinft(\mathbb H^2)\,|\,\norm{f}_r < \infty\},
\]
equipped with the direct limit topology with respect to the norms $\norm{f}_r$. 

Let ${\Laplace}_{\mathbb H^2}\coloneq d^\ast d: \Cinft(\mathbb H^2)\to \Cinft(\mathbb H^2)$ be the non-negative Laplacian. Given $\mu\in \C$, let
\begin{equation} \label{eq:modspace}
    \mathcal{E}_\mu\coloneq\{f\in C^\infty_\mathrm{mod}(\mathbb H^2)\,|\, ({\Laplace}_{\mathbb H^2}  -\mu)f =0 \}
\end{equation}
be the  $\mu$-eigenspace of ${\Laplace}_{\mathbb H^2}$ in $C^\infty_\mathrm{mod}(\mathbb H^2)$. 

The main properties of the Poisson-Helgason transform that we shall need are summarized in the following result proved by Helgason \cite{Helg74} in the context of hyperfunctions and by van den Ban-Schlichtkrull \cite[Cor.~11.3 and Thm.~12.2]{vdBS87} and Oshima-Sekiguchi \cite[Thm.~3.15]{OS80} in the distribution context and more general settings. We follow the presentation of \cite[Lem. 2.1]{GHWa} and \cite[Sec.~6.3]{DFG}.
\begin{prop} \label{prop:Poisson}
    For every $\lambda\in \C$, the image of the Poisson transform $\mathcal P_\lambda: \D'(\partial_\infty \mathbb H^2)\to \Cinft(\mathbb H^2)$ is contained in the eigenspace $\mathcal{E}_{-\lambda(1+\lambda)}$ and $\mathcal P_\lambda$ is $G$-equivariant in the sense that
    $$
    g^\ast \mathcal P_\lambda(\omega) = \mathcal P_\lambda(|\mathrm{det}\, dg|^{-\lambda} g^\ast \omega), \;\; \forall \omega \in \mathcal{D}'(\partial_\infty \mathbb{H}^2),
    $$
    where $g^\ast \omega$ is the pullback of the distribution $\omega$ along the diffeomorphism given by the action \eqref{eq:GactionK} of the group element $g \in G$  and $|\mathrm{det}\, dg|$ is the Jacobian function of that diffeomorphism. 
Moreover, if $\lambda\not\in -\N=\{-1,-2,\ldots\}$, then 
$$\mathcal P_\lambda: \D'(\partial_\infty \mathbb H^2)\to \mathcal{E}_{-\lambda(1+\lambda)}$$ is an isomorphism of topological vector spaces.
\end{prop}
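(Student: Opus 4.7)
The plan is to handle the three assertions in sequence, the first two by direct calculation from the definitions in Section~\ref{sec:bdry} and the third by invoking the cited results of Helgason, van den Ban--Schlichtkrull, and {\=O}shima--Sekiguchi.

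For the eigenfunction property, I would first verify that for each fixed $b\in \partial_\infty \mathbb H^2$ the function $x\mapsto p(x,b)^{1+\lambda}$ itself lies in $\mathcal E_{-\lambda(1+\lambda)}$. Writing $b=k\in K$ and $x=gK$, one has $p(x,b)^{1+\lambda}=e^{-(1+\lambda)\varrho(\mathscr A(g^{-1}k))}$, a horocyclic exponential. The Laplacian $\Laplace_{\mathbb H^2}$ corresponds (up to sign) to the Casimir operator of $\g$ acting on left-$K$-invariant functions on $G$, and a standard computation using $\varrho=\alpha/2$ and $\alpha(X)=1$ shows that such horocyclic exponentials are eigenfunctions with the claimed eigenvalue $-\lambda(1+\lambda)$. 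Moderate growth follows from the uniform bound $|p(x,b)^{1+\lambda}|\leq e^{|1+\Re\lambda|\,d_{\mathbb H^2}(o,x)}$, which is immediate from the horocycle bracket definition. The statement for $\mathcal P_\lambda(\omega)$ then follows because $\Laplace_{\mathbb H^2}$ acts only in the $x$-variable and thus commutes with the distributional pairing against $\omega$, while moderate growth is transferred via the continuity estimate defining $\omega$ on $K$.

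For $G$-equivariance, I would start from the Iwasawa cocycle identity $\mathscr A(gh)=\mathscr A(g\kappa(h))+\mathscr A(h)$ for $g,h\in G$, a direct consequence of the Iwasawa decomposition and the fact that $\Ad(A)$ preserves $\nL_+$. Applied to the computation of $p(g^{-1}x,b)$, this yields a transformation rule of the form $p(g^{-1}x,b)=p(x,g\cdot b)\,J_g(b)$ for an explicit horocyclic Jacobian factor $J_g$ that coincides with the Jacobian $|\det dg|$ of the boundary action \eqref{eq:GactionK}. Raising to the power $1+\lambda$, substituting into the definition of $\mathcal P_\lambda(\omega)$, and using the pullback identity $g^*\omega(f)=\omega((g^{-1})^*f\,|\det dg^{-1}|)$ (first for $\omega\in C^\infty(\partial_\infty \mathbb H^2)$ and then by continuity and density for distributional $\omega$) produces exactly the claimed factor $|\det dg|^{-\lambda}$.

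The main obstacle is the isomorphism statement for $\lambda\notin -\N$, which is the distribution-theoretic version of the Helgason conjecture; I would invoke it directly from the cited works rather than reprove it. The strategy, which I would merely outline, uses the boundary value map $\beta_\lambda:\mathcal E_{-\lambda(1+\lambda)}\to \D'(\partial_\infty\mathbb H^2)$ extracting the leading coefficient of an eigenfunction's asymptotic expansion at infinity. The composition $\beta_\lambda\circ\mathcal P_\lambda$ equals the identity up to a Harish-Chandra $c$-function factor which vanishes exactly when $\lambda\in -\N$; combined with the surjectivity of $\beta_\lambda$ proven in \cite{vdBS87,OS80}, this gives bijectivity of $\mathcal P_\lambda$. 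Continuity of the inverse is built into the construction of $\beta_\lambda$ via the asymptotic expansions. The delicate point, where the work of van den Ban--Schlichtkrull and {\=O}shima--Sekiguchi is essential, is that for distributional (rather than hyperfunctional) boundary data the relevant expansions converge in the appropriate topology on the entirety of $\mathcal E_{-\lambda(1+\lambda)}$.
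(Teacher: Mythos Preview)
The paper does not give a proof of this proposition at all: it is stated as a summary of known results, with the proof entirely delegated to the references \cite{Helg74}, \cite[Cor.~11.3 and Thm.~12.2]{vdBS87}, and \cite[Thm.~3.15]{OS80}, and the presentation following \cite[Lem.~2.1]{GHWa} and \cite[Sec.~6.3]{DFG}. Your proposal therefore goes further than the paper by actually sketching the arguments for the eigenfunction property and the $G$-equivariance, and by outlining the boundary-value-map strategy behind the isomorphism statement.

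Your sketch is the standard one and is correct in outline. Two minor comments: for moderate growth of $\mathcal P_\lambda(\omega)$ when $\omega$ is a genuine distribution, you need not only the bound on $p(x,b)^{1+\lambda}$ itself but also moderate-growth bounds on its $b$-derivatives up to the order of $\omega$, since the distributional pairing involves these; and your description of $\beta_\lambda\circ\mathcal P_\lambda$ as the identity times a $c$-function factor glosses over the role of the intertwining operator between the principal series at $\lambda$ and $-1-\lambda$, though for the bijectivity conclusion this does not matter. Since the paper itself simply cites the literature, your proposal is already more than adequate.
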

For later use, we define two positive  functions $\Phi_\pm\in \Cinft(S\mathbb H^2)$ by
\bq
\Phi_\pm\coloneq p\circ F_\pm,\label{eq:defPhipm}
\eq
with the Poisson kernel $p$ from Definition \ref{def:Poisson} and the maps $F_\pm$ from \eqref{eq:Fpm}.

\subsection{Convex-cocompact hyperbolic surfaces}\label{sec:convcocomp}

Let $\Gamma \subset G=\mathrm{PSL}(2,\mathbb{R})$ be a discrete subgroup. Its \emph{limit set} $\Lambda_\Gamma$ is the set of all accumulation points of the $\Gamma$-orbit $\Gamma\cdot o$ (or equivalently of any other $\Gamma$-orbit in $\mathbb H^2$) in the geodesic compactification $\mathbb H^2\cup\partial_\infty \mathbb H^2$. Since $\Gamma$ acts properly discontinuously on $\mathbb H^2$, we have $\Lambda_\Gamma\subset \partial_\infty \mathbb H^2$. Let $\mathrm{Conv}(\Lambda_\Gamma)\subset \mathbb H^2$ be the (geodesic) convex hull of $\Lambda_\Gamma$, a $\Gamma$-invariant set. We call the group $\Gamma$ \emph{convex-cocompact} if it is torsion-free and acts cocompactly on $\mathrm{Conv}(\Lambda_\Gamma)$. The quotient 
\[
\mathbf X_\Gamma\coloneq \Gamma \backslash \mathbb H^2
\]
is a $2$-dimensional oriented complete Riemannian manifold called a \emph{convex-cocompact hyperbolic surface}. It is either compact or of infinite volume.
 In the case of infinite volume, $\mathbf{X}_\Gamma$ is a non-compact hyperbolic surface with no cusps, i.e., only funnel ends.

\begin{ex}[Schottky surfaces]\label{ex:Schottky}
    Schottky surfaces are a special class of convex-co\-com\-pact hyperbolic surfaces defined as quotients of $\HH^2$ by \emph{Schottky groups}. Such a group is constructed from a collection of Euclidean disks $D_1, \dots, D_{2r}$ ($r \in \N$) in $\C$ with centers on the real axis and mutually disjoint closures,  in the following way: For every $1 \leq i \leq r$ there exists a unique element $S_i \in \mathrm{PSL}(2,\R)$ that maps the boundary $\partial D_i$ to the boundary of $\partial D_{i+r}$ and the interior of $D_i$ to the exterior of $D_{i+r}$. The Schottky group is the free discrete subgroup
    $$\Gamma=\langle S_1, \dots, S_r \rangle \subset \mathrm{PSL}(2,\mathbb{R})$$
    generated by $S_1, \dots, S_r$.
\end{ex}

We write $S\mathbf X_\Gamma\subset T\mathbf X_\Gamma$ for the Riemannian unit sphere bundle of $\mathbf X_\Gamma$. 
It can be canonically identified with the quotient manifold $\Gamma \backslash S\mathbb H^2 = \Gamma \backslash G$, recalling the identification $G=S\mathbb H^2$ via the diffeomorphism \eqref{eq:GSH2diffeo}. We denote by
\bq\begin{split} \label{eq:pi_Gamma}
    \pi_\Gamma: G=S\mathbb H^2 &\to S\mathbf X_\Gamma=\Gamma \backslash G\\
    \widetilde \pi_\Gamma: G/K=\mathbb H^2 &\to \mathbf X_\Gamma=\Gamma \backslash G/K
\end{split}
\eq
the orbit projections. For a set $F\subset S\mathbf X_\Gamma$, we use the notation $\widetilde{F}\coloneq\pi^{-1}_\Gamma(F)\subset S\mathbb H^2=G$. 

Functions and distributions on $\mathbf X_\Gamma$ and $S\mathbf X_\Gamma$ can be identified with $\Gamma$-invariant functions and distributions on $\mathbb H^2=G/K$ and $S\mathbb H^2=G$ via pullback along the surjective submersions $\widetilde\pi_\Gamma$ and $\pi_\Gamma$, respectively. Explicitly, we will use the isomorphisms
\bq\begin{split} \label{eq:isomorphism_pi_Gamma}
{\pi}_\Gamma^\ast: \D'(S\mathbf X_\Gamma)&\stackrel{\cong}{\longrightarrow} \D'(S\mathbb H^2)^\Gamma\\
\widetilde\pi_\Gamma^\ast: \Cinft( \mathbf X_\Gamma)&\stackrel{\cong}{\longrightarrow} \Cinft(\mathbb H^2)^\Gamma,\end{split}
\eq
where the upper index $\Gamma$ indicates the subspace of $\Gamma$-invariant functions and distributions, respectively. 
The Riemannian measure on $\mathbf X_\Gamma$ and the Sasaki-Riemannian measure on $S\mathbf X_\Gamma$ are the pushforwards $(\pi_\Gamma)_\ast d x$ and $(\widetilde\pi_\Gamma)_\ast d g$ of the measure $dx=d(gK)$ on $\mathbb H^2=G/K$ and the Haar/Liouville measure $d\mu_L=dg$ on $S\mathbb H^2=G$ introduced in Section \ref{sec:measures}. 

The geodesic flow on $S\mathbf X_\Gamma$ will again be denoted by $\varphi_t$. From Section \ref{sec:geodesicflow} we get the explicit formula
\begin{equation*}
\varphi_t(\Gamma  g)=\Gamma g \exp(tX)\label{eq:geodflowXGamma}
\end{equation*}
for all $\Gamma  g\in \Gamma \backslash G=S\mathbf X_\Gamma$. Every Lie algebra element $Y\in \g$ generates a smooth vector field on $S\mathbf X_\Gamma$ denoted again by $Y$, given by the pushforward of the left invariant vector field $Y$ along $\widetilde\pi_\Gamma$. In particular, the Lie algebra element $X$ generates the geodesic flow $\varphi_t$ this way. We denote by $$\pi:S\mathbf{X}_\Gamma \rightarrow \mathbf{X}_\Gamma$$ the sphere bundle projection. As the fibers of this bundle are compact, the pullback $\pi^\ast:C^{\infty}_c(\mathbf{X}_\Gamma) \rightarrow C^{\infty}_c(S\mathbf{X}_\Gamma)$ dualizes to a pushforward map
\bq
\pi_\ast: \D'(S\mathbf{X}_\Gamma)\to \D'(\mathbf{X}_\Gamma)\label{eq:pushfwd}
\eq
which acts on functions in $C^{\infty}_c(S\mathbf{X}_\Gamma)\subset \D'(S\mathbf{X}_\Gamma)$ (embedded using the $L^2$-pairing with respect to the Sasaki-Riemannian measure) by integration in the fibers of $S\mathbf{X}_\Gamma$ with respect to the fiber-wise Riemannian measures.  

\subsubsection{Smooth fundamental domain cutoffs} Let  $\mathscr C\subset S\mathbf X_\Gamma$ be a compact set. Recall that $\widetilde {\mathscr C}=\pi^{-1}_\Gamma({\mathscr C})\subset S\mathbb H^2$ denotes its $\Gamma$-invariant lift.
\begin{definition}\label{def:smoothfundcutoff}
    A \emph{smooth fundamental domain cutoff near $\widetilde {\mathscr C}$} is a function $\chi\in \CT(S\mathbb H^2)$ such that
    \begin{equation*}
    \sum_{\gamma\in \Gamma}\chi(\gamma x) = 1 \quad \forall\,x\in \widetilde {\mathscr C}.\label{eq:funddomaincutoff}
    \end{equation*}
\end{definition}
\begin{lem}\label{lem:existencecutoff}
   There exists a smooth fundamental domain cutoff near $\widetilde {\mathscr C}$.
\end{lem}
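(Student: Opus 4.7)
The plan is to construct $\chi$ by lifting a smooth partition of unity from the quotient $S\mathbf{X}_\Gamma$ via the covering map $\pi_\Gamma$. Since $\Gamma$ is discrete and torsion-free, it acts freely and properly discontinuously on $G=S\HH^2$, so $\pi_\Gamma : S\HH^2\to S\mathbf{X}_\Gamma$ is a smooth covering map (indeed a $\Gamma$-principal bundle in the discrete sense). First I would cover the compact set $C\subset S\mathbf{X}_\Gamma$ by finitely many open sets $V_1,\ldots,V_n\subset S\mathbf{X}_\Gamma$, each evenly covered by $\pi_\Gamma$, and for each $i$ fix one component $\widetilde V_i\subset \pi_\Gamma^{-1}(V_i)$ on which $\pi_\Gamma$ restricts to a diffeomorphism onto $V_i$.

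Second, by a standard partition-of-unity/shrinking argument for the smooth paracompact manifold $S\mathbf{X}_\Gamma$, I would choose $\eta_1,\ldots,\eta_n\in \CT(S\mathbf{X}_\Gamma)$ with $\supp(\eta_i)\subset V_i$, $\eta_i\geq 0$, and $\sum_{i=1}^n\eta_i\equiv 1$ on an open neighborhood of $C$. For each $i$ define $\chi_i\in \CT(S\HH^2)$ by
\begin{equation*}
\chi_i(x):=\begin{cases}\eta_i(\pi_\Gamma(x)),& x\in \widetilde V_i,\\ 0,& x\notin \widetilde V_i.\end{cases}
\end{equation*}
Since $\pi_\Gamma|_{\widetilde V_i}$ is a diffeomorphism onto $V_i$ and $\supp(\eta_i)$ is a compact subset of $V_i$, the function $\chi_i$ is smooth on all of $S\HH^2$ with compact support contained in the preimage $(\pi_\Gamma|_{\widetilde V_i})^{-1}(\supp(\eta_i))\subset \widetilde V_i$.

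Finally, I would set $\chi:=\sum_{i=1}^n\chi_i\in \CT(S\HH^2)$ and verify the defining identity. Given $x\in \widetilde C$, the fact that $\widetilde V_i$ is a single sheet over $V_i$ means there is at most one $\gamma\in \Gamma$ satisfying $\gamma x\in \widetilde V_i$, and such a $\gamma$ exists exactly when $\pi_\Gamma(x)\in V_i$; in that case it contributes $\eta_i(\pi_\Gamma(\gamma x))=\eta_i(\pi_\Gamma(x))$ to $\sum_{\gamma'\in \Gamma}\chi_i(\gamma' x)$, and otherwise both quantities vanish. Summing over $i$ therefore gives
\begin{equation*}
\sum_{\gamma\in \Gamma}\chi(\gamma x)=\sum_{i=1}^n\eta_i(\pi_\Gamma(x))=1,
\end{equation*}
since $\pi_\Gamma(x)\in C$. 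I do not anticipate any real obstacle here: the only mildly delicate step is producing the evenly covered finite cover of $C$ together with the accompanying partition of unity, both of which are routine consequences of the proper discontinuity of $\Gamma$ and the smooth paracompactness of $S\mathbf{X}_\Gamma$.
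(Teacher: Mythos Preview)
Your proof is correct and complete. It differs from the paper's argument, which instead fixes a fundamental domain $\mathcal F\subset S\mathbb H^2$, chooses $a\in\Cinft(S\mathbb H^2)^\Gamma$ with $a\equiv 1$ on $\widetilde C$ and $f\in\Cinft(S\mathbb H^2)$ with $f\equiv 1$ on $\mathcal F$, $f\geq 0$, and $\supp a\cap\supp f$ compact, and then sets
\[
\chi(x):=\frac{a(x)f(x)}{\sum_{\gamma\in\Gamma}f(\gamma x)}.
\]
The $\Gamma$-invariance of the denominator makes $\sum_\gamma\chi(\gamma\cdot)=a$ immediate. Your construction trades this averaging trick for the covering-space picture: you never invoke a fundamental domain, only finitely many local sections of $\pi_\Gamma$ over a compact set, and the identity $\sum_\gamma\chi(\gamma x)=1$ falls out because each lifted bump $\chi_i$ lives on a single sheet. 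The paper's formula is slicker to write down, while your argument is slightly more self-contained (no need to discuss fundamental domains or why the denominator is everywhere positive where it matters). Both are entirely standard and equally valid.
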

\begin{proof}Let $\mathcal F\subset S\mathbb H^2$ be a fundamental domain for the $\Gamma$-action. 
Since ${\mathscr C}$ is compact, $\mathcal F\cap \widetilde {\mathscr C}$ is compact. We can therefore choose functions $u\in \Cinft(S\mathbb H^2)^\Gamma$, $f\in \Cinft(S\mathbb H^2)$ such that $u=1$ on $\widetilde {\mathscr C}$, $f=1$ on $\mathcal F$, $f\geq 0$, and $\supp u\cap \supp f$ is compact. Then
\bq
\chi(x)\coloneq\frac{u(x)f(x)}{\sum_{\gamma\in \Gamma}f(\gamma x)},\qquad x\in S\mathbb H^2\label{eq:chidef}
\eq
defines a smooth fundamental domain cutoff near $\widetilde {\mathscr C}$.
\end{proof}
\begin{lem}\label{lem:indepchi}
    Let $v\in \D'(S\mathbb H^2)^\Gamma$, $u\in \CT(S\mathbf X_\Gamma)$, and $\chi_1,\chi_2\in \CT(S\mathbb H^2)$ two smooth fundamental domain cutoffs near $\widetilde{\supp u}$. Then
    \[
    v(\chi_1 {\pi}_\Gamma^\ast u)=v(\chi_2 {\pi}_\Gamma^\ast u).
    \]
\end{lem}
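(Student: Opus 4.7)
The plan is to show $u\bigl((\chi_1-\chi_2)\pi_\Gamma^\ast a\bigr)=0$ by inserting the partition of unity given by the $\Gamma$-translates of $\chi_2$ into $\chi_1\pi_\Gamma^\ast a$, applying the $\Gamma$-invariance of $u$ term by term, and then resumming using the analogous property of $\chi_1$. To begin, both $\chi_i\pi_\Gamma^\ast a$ belong to $\CT(S\mathbb H^2)$ and are supported inside $\widetilde{\supp a}=\pi_\Gamma^{-1}(\supp a)$, so both pairings with $u$ are defined.

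Concretely, the fundamental domain cutoff property gives $\sum_{\gamma\in\Gamma}\chi_2(\gamma^{-1}x)=1$ for every $x\in\widetilde{\supp a}$, with the sum locally finite by the proper discontinuity of $\Gamma$. Because $\chi_1$ has compact support, only finitely many $\gamma\in\Gamma$ satisfy $\gamma\cdot\supp\chi_2\cap\supp\chi_1\neq\emptyset$, and only for these $\gamma$ is the product $\chi_1\cdot(\chi_2\circ\gamma^{-1})$ nonzero. Using that $\chi_1\pi_\Gamma^\ast a$ vanishes off $\widetilde{\supp a}$, one obtains the genuine (finite-sum) identity in $\CT(S\mathbb H^2)$
\[
\chi_1\pi_\Gamma^\ast a=\sum_{\gamma\in\Gamma}\chi_1\cdot(\chi_2\circ\gamma^{-1})\cdot\pi_\Gamma^\ast a.
\]
Applying $u$, distributing the finite sum, and using that $\pi_\Gamma^\ast a$ is $\Gamma$-invariant together with the $\Gamma$-invariance of $u$ in the form $u(f)=u(f\circ\gamma)$, each summand transforms as
\[
u\bigl(\chi_1\cdot(\chi_2\circ\gamma^{-1})\cdot\pi_\Gamma^\ast a\bigr)=u\bigl((\chi_1\circ\gamma)\cdot\chi_2\cdot\pi_\Gamma^\ast a\bigr).
\]
Summing back and invoking the identity $\sum_{\gamma\in\Gamma}\chi_1(\gamma x)=1$ on $\widetilde{\supp a}$ together with the fact that $\chi_2\pi_\Gamma^\ast a$ is supported in $\widetilde{\supp a}$ then yields $u(\chi_1\pi_\Gamma^\ast a)=u(\chi_2\pi_\Gamma^\ast a)$.

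The argument is essentially bookkeeping; the only thing requiring mild care is the simultaneous use of two kinds of local finiteness---first, that $\sum_\gamma\chi_2\circ\gamma^{-1}$ is locally finite, so the intermediate identity makes pointwise sense, and second, that only finitely many $\gamma$ contribute after multiplication by $\chi_1\pi_\Gamma^\ast a$, so that one may legitimately commute $u$ with the summation. I do not anticipate any serious obstacle beyond this bookkeeping.
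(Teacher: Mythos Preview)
Your proof is correct and follows essentially the same approach as the paper's: insert the partition of unity $\sum_\gamma \gamma^\ast\chi_2=1$ into $\chi_1\pi_\Gamma^\ast a$, use the $\Gamma$-invariance of $u$ and $\pi_\Gamma^\ast a$ to shift the $\Gamma$-translation from $\chi_2$ to $\chi_1$, and then resum using $\sum_\gamma \gamma^\ast\chi_1=1$. The paper presents this in a single displayed chain of equalities without spelling out the finiteness bookkeeping that you (rightly) make explicit.
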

\begin{proof}We argue as in \cite[proof of Lem. 3.5]{AZ07}: Since $\sum_{\gamma\in \Gamma}\gamma^\ast\chi_j=1$ on $\supp \pi_\Gamma^\ast u$ for $j=1,2$, we find that 
    \begin{align*}
        v(\chi_1 {\pi}_\Gamma^\ast u)&=v\bigg(\bigg(\sum_{\gamma\in \Gamma}\gamma^\ast\chi_2\bigg)\chi_1 {\pi}_\Gamma^\ast u\bigg)
        =v\bigg(\chi_2\bigg(\sum_{\gamma\in \Gamma}\gamma^\ast\chi_1\bigg) {\pi}_\Gamma^\ast u\bigg)
        =v(\chi_2 {\pi}_\Gamma^\ast u),
    \end{align*}
    where in the second step we used the $\Gamma$-invariance of $v$ and ${\pi}_\Gamma^\ast u$.
\end{proof}

Conversely, it will sometimes be convenient to express the evaluation of a $\Gamma$-invariant distribution on an arbitrary test function as an evaluation at the product of the lift of a function on $S\mathbf X_\Gamma$ and a smooth fundamental domain cutoff:
\begin{lem}\label{lem:newlemma}
Let $f\in \CT(S\mathbb H^2)$. Then there is a function $u_f\in \CT(S\mathbf X_\Gamma)$ and a smooth fundamental domain cutoff $\chi\in \CT(S\mathbb H^2)$ near $\widetilde{\supp u_f}$ such that for all $v\in \D'(S\mathbb H^2)^\Gamma$ one has
\[
v(f)=v(\chi \pi_\Gamma^\ast u_f).
\]
Moreover, the map
\begin{align*}
    \CT(S\mathbb H^2) &\to \CT(S\mathbf X_\Gamma)\\
    f&\mapsto u_f
\end{align*}
is continuous.
\end{lem}
\begin{proof}
    Let $\mathcal F\subset S\mathbb H^2$ be a fundamental domain for the $\Gamma$-action, $\mathscr C\subset S\mathbb H^2$ a compact set, and $f\in \CT(S\mathbb H^2)$ with $\supp f\subset \mathscr C$. Then the set $\Gamma_{\mathscr C}:=\{\gamma \in \Gamma\,|\,\F\cap \gamma^{-1}\mathscr C\neq \emptyset\}\subset \Gamma$ is finite and the support of the $\Gamma$-invariant function
    \[
    \tilde u_f:=\sum_{\gamma\in \Gamma}\gamma^\ast f\in \Cinft(S\mathbb H^2)^\Gamma
    \]
    intersects $\F$ only in the compact set $\widetilde{\mathscr K}_{\mathscr C}:=\bigcup_{\gamma\in \Gamma_{\mathscr C}}\F\cap \gamma^{-1}\mathscr C$. Therefore, the unique function $u_f\in \Cinft(S\mathbf X_\Gamma)$ with $\pi_\Gamma^\ast u_f=\tilde u_f$ satisfies $\supp u_f\subset {\mathscr K}_{\mathscr C}$, where ${\mathscr K}_{\mathscr C}:=\pi_\Gamma(\widetilde{\mathscr K}_{\mathscr C})\subset S\mathbf X_\Gamma$ is compact, in particular $u_f\in \CT(S\mathbf X_\Gamma)$. Moreover, if $D$ is any differential operator on $S\mathbf X_\Gamma$, it lifts to a $\Gamma$-invariant differential operator $\widetilde D$ on $S\mathbb H^2$ and we have $\norm{Du_f}_\infty\leq \Vert\widetilde D f\Vert_\infty$, which shows that the map $f\mapsto u_f$ is continuous with respect to the standard LF topologies on $\CT(S\mathbb H^2)$ and $\CT(S\mathbf X_\Gamma)$, respectively. 

    Finally, let $v\in \D'(S\mathbb H^2)^\Gamma$ and $\chi\in  \CT(S\mathbb H^2)$ a smooth fundamental domain cutoff near $\widetilde{\supp u_f}=\supp \tilde u_f$, which exists by Lemma \ref{lem:existencecutoff}. Then
    \[
    v(f)=v\bigg(\sum_{\gamma\in \Gamma}((\gamma^{-1})^\ast \chi) f\bigg)=\sum_{\gamma\in \Gamma}v\big((\gamma^{-1})^\ast \chi) f\big)=\sum_{\gamma\in \Gamma}v(\chi \gamma^\ast f)=u\bigg(\chi\sum_{\gamma\in \Gamma}\gamma^\ast f\bigg)=v(\chi \tilde u_f),
    \]
    where the third equality is due to the $\Gamma$-invariance of $v$. 
\end{proof}

\section{Classical and quantum resonances}
\label{sect:classical_quantum}
In this section, we introduce the concept of the Pollicott-Ruelle resonances and the quantum resonances for convex-cocompact quotients as it was established in \cite{DG,GHWa,Guillope, MazzeoMelrose}. In all of the following, $\mathbf{X}_\Gamma=\Gamma\backslash \mathbb{H}^2$ denotes an oriented convex-cocompact hyperbolic surface {(see Section \ref{sec:convcocomp})}. 

\subsection{Classical resonances}
\label{sect:classical_resonances}

 As before, let $X$ be the vector field generating the geodesic flow $\varphi_t$ on the unit tangent bundle $S\mathbf{X}_\Gamma$ of the convex-cocompact hyperbolic surface $\mathbf{X}_\Gamma$.
Dyatlov and Guillarmou \cite[Prop.~6.2]{DG} showed that the  
$L^2$-resolvent 
$$(-X-\lambda)^{-1}: L^2(S\mathbf{X}_\Gamma) \longrightarrow L^2(S\mathbf{X}_\Gamma)$$
of the operator $-X$, defined for $\mathrm{Re}(\lambda) \gg 0$ by the integral formula
$$
(-X-\lambda)^{-1}f=-\int_0^\infty e^{-\lambda t}f\circ \varphi_{-t}\; \d t
$$
has a meromorphic continuation to $\C$ as a family of continuous operators 
$$R_X(\lambda):C^\infty_c(S\mathbf{X}_\Gamma) \rightarrow \D'(S\mathbf{X}_\Gamma).$$ 
The poles of $R_X(\lambda)$ are called \emph{classical resonances} or \emph{Pollicott-Ruelle resonances}. 
Moreover, by \cite[Thm.~2,~p.~3092]{DG}, the following holds. 
Given a classical resonance $\lambda_0\in \C$, the residue of $R_X(\lambda)$  at $ \lambda=\lambda_0$ and its formal $L^2$-adjoint are finite rank operators
\bqn
\begin{split}
\Pi_{\lambda_0}^X&\coloneq \mathrm{Res}_{\lambda=\lambda_0}(R_X(\lambda)): C^\infty_c(S\mathbf{X}_\Gamma) \rightarrow \D'(S\mathbf{X}_\Gamma)\\
\Pi_{\lambda_0}^{X^\ast}&\coloneq(\Pi_{\lambda_0}^X)^\ast: C^\infty_c(S\mathbf{X}_\Gamma) \rightarrow \D'(S\mathbf{X}_\Gamma)\end{split}
\eqn
that commute with $X$ and satisfy \bq
\begin{split}
\mathrm{Ran}(\Pi_{\lambda_0}^X) &\subset \ker (-X-\lambda_0)^{J(\lambda_0)},\\
\mathrm{Ran}(\Pi_{\lambda_0}^{X^\ast}) &\subset \ker (X-\overline\lambda_0)^{J(\lambda_0)}\label{eq:RanRan}\end{split}
\eq for some power $J(\lambda_0)\in \N$ that we choose minimal.

\begin{definition}[(Generalized) classical (co-)resonant states] \label{defn:Ruelle_resonant_states}Let $\lambda_0\in \C$ be a classical resonance. 
    The spaces of \emph{generalized classical resonant states} and \emph{generalized classical co-resonant states} of order $j\geq 1$ are
    \begin{equation*}\begin{split}
        \mathrm{Res}^j_X(\lambda_0)&\coloneq \mathrm{Ran}(\Pi_{\lambda_0}^X)\cap \ker (-X-\lambda_0)^j\subset \D'(S\mathbf{X}_\Gamma),\\
        \mathrm{Res}^j_{X^\ast}(\lambda_0)&\coloneq \mathrm{Ran}(\Pi_{\lambda_0}^{X^\ast})\cap \ker (X-\overline\lambda_0)^j\subset \D'(S\mathbf{X}_\Gamma).\end{split}
    \end{equation*}
    We call $\mathrm{Res}^1_X(\lambda_0)$ and $\mathrm{Res}^1_{X^\ast}(\lambda_0)$ the spaces of \emph{classical resonant states} and \emph{classical co-resonant states}, respectively, and denote by
    \begin{equation} \label{eq:widetildeRes}\begin{split}
    \widetilde{\mathrm{Res}}^{1}_{X}(\lambda_0)&\coloneq\pi_\Gamma^{*}(\mathrm{Res}^{1}_{X}(\lambda_0))\in \mathcal D'(S\mathbb H^2)^\Gamma,\\
    \widetilde{\mathrm{Res}}^{1}_{X^\ast}(\lambda_0)&\coloneq\pi_\Gamma^{*}(\mathrm{Res}^{1}_{X^\ast}(\lambda_0))\in \mathcal D'(S\mathbb H^2)^\Gamma
    \end{split}
    \end{equation}
    the spaces of $\Gamma$-invariant lifts.
\end{definition}
Define the \emph{incoming and outgoing tails} of the geodesic flow $\varphi_t$ by
\[
\Upsilon_\pm\coloneq\{\xi \in S\mathbf{X}_\Gamma \,|\, \{\varphi_{\mp t}(\xi):t\in [0,\infty)\}\text{ is bounded} \},
\]
as well as the \emph{trapped} or \emph{non-wandering set}
\[
\Upsilon\coloneq\Upsilon_+\cap \Upsilon_-.
\]
For a distribution $v\in \D'(S\mathbf{X}_\Gamma)$ we denote by $\mathrm{WF}(v)\subset T^\ast (S\mathbf{X}_\Gamma)$ its wavefront set.
\begin{lem}[{\cite[Thm.~2,~p.~3092]{DG}}]\label{lem:ResResetoile}For every classical resonance $\lambda_0\in \C$ and every $j\in \{1,\ldots,J(\lambda_0)\}$ one has
\bqn
\begin{split}
    \mathrm{Res}^j_X(\lambda_0)&=\{v \in \D'(S\mathbf{X}_\Gamma)\,|\, \mathrm{supp}(v) \subset \Upsilon_+, \mathrm{WF}(v) \subset E^*_+, (-X-\lambda_0)^jv=0\},\\
    \mathrm{Res}^j_{X^\ast}(\lambda_0)&=\{v^\ast \in \D'(S\mathbf{X}_\Gamma)\,|\, \mathrm{supp}(v^\ast) \subset \Upsilon_-, \mathrm{WF}(v^\ast) \subset E^*_-, (X-\overline\lambda_0)^jv^\ast=0\}.
\end{split}
\eqn
\end{lem}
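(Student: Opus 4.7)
The plan is to reduce to the Dyatlov--Guillarmou construction of Pollicott--Ruelle resonances for open hyperbolic systems, using anisotropic Sobolev spaces adapted to the dynamics of the geodesic flow. Since the quoted reference essentially proves the statement, the goal is really to reassemble the microlocal ingredients into the exact form claimed here. I would verify the two equalities separately; the spaces on the right-hand side are symmetric under the substitution $X\leftrightsquigarrow -X$, $\lambda_0\leftrightsquigarrow\bar\lambda_0$, and since $\Pi_{\lambda_0}^{X^\ast}$ is defined as the formal adjoint of $\Pi_{\lambda_0}^X$, it suffices to treat the first identity and apply duality with the interchange $(E^\ast_+,\Upsilon_+)\leftrightsquigarrow (E^\ast_-,\Upsilon_-)$ for the second.

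\textbf{Forward inclusion.} I would first show that any $v$ in the range of $\Pi_{\lambda_0}^X$ with $(-X-\lambda_0)^{j}v=0$ satisfies the three properties on the right. Fix an escape function $G$ for $\varphi_t$ that is strictly negative on a conic neighbourhood of $E^\ast_+\setminus 0$ and strictly positive near $E^\ast_-\setminus 0$ (this uses the Anosov splitting \eqref{eq:splitting}). For the weight $rG$ with $r$ large enough, the operator $-X-\lambda$ acts as a Fredholm operator of index zero on the anisotropic Sobolev space $\mathcal H_{rG}$, and its inverse yields the meromorphic continuation $R_X(\lambda)$ in the half-plane $\Re\lambda>-r/C$. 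Elements of $\mathcal H_{rG}$ automatically have wavefront set contained in $E^\ast_+$, so $v\in\mathrm{Ran}(\Pi_{\lambda_0}^X)\subset\mathcal H_{rG}$ satisfies $\mathrm{WF}(v)\subset E^\ast_+$. To get support in $\Upsilon_+$, use propagation of singularities together with the fact that, in the half-plane of convergence, $R_X(\lambda)$ is given by integration of forward-translated test functions along the flow; the residue inherits this structure, which forces its image to consist of distributions carried by backward-trapped trajectories, i.e.\ by $\Upsilon_+$. The identity $(-X-\lambda_0)^{j}v=0$ is just the definition of $\mathrm{Res}^j_X(\lambda_0)$.

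\textbf{Reverse inclusion.} Given $v\in\D'(S\mathbf X_\Gamma)$ with $\supp v\subset\Upsilon_+$, $\mathrm{WF}(v)\subset E^\ast_+$ and $(-X-\lambda_0)^{j}v=0$, the wavefront condition plus the support condition (combined with the fact that $\Upsilon_+$ is bounded in the sense relevant for the anisotropic Sobolev machinery, since its projection to $\mathbf X_\Gamma$ is relatively compact by convex-cocompactness) places $v$ in the anisotropic Sobolev space $\mathcal H_{rG}$ for large enough $r$. On $\mathcal H_{rG}$ the generalized eigenspace of $-X$ at $\lambda_0$ is finite-dimensional and coincides with $\mathrm{Ran}(\Pi_{\lambda_0}^X)\cap \ker(-X-\lambda_0)^j$ by standard Fredholm theory applied to the meromorphic inverse $R_X(\lambda)$ near $\lambda_0$. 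Hence $v$ belongs to this generalized eigenspace, so $v\in\mathrm{Res}^j_X(\lambda_0)$.

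The main technical obstacle is the support statement in the forward inclusion: the microlocal estimates readily yield the wavefront bound, but extracting $\supp v\subset\Upsilon_+$ requires controlling the dynamics globally, not just microlocally. This is where the specific geometry of convex-cocompact quotients enters — one needs that trajectories escaping to the funnels contribute no resonant mass, which is handled by the construction of $R_X(\lambda)$ as a limit of smoothly truncated forward propagators and by checking that the residues at poles remain supported on the backward-bounded set. Once this is granted, the rest of the argument is a clean application of the Dyatlov--Guillarmou framework.
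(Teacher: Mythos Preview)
The paper does not prove this lemma; it is simply quoted from \cite[Thm.~2]{DG} with no argument given. Your proposal is an outline of the Dyatlov--Guillarmou proof itself, so in that sense you are following exactly the approach the paper defers to.

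One technical point deserves correction: it is not true that ``elements of $\mathcal H_{rG}$ automatically have wavefront set contained in $E^\ast_+$'' for a single fixed $r$. A distribution in $\mathcal H_{rG}$ may have wavefront anywhere; the weight only prescribes the microlocal Sobolev order. What forces $\mathrm{WF}(v)\subset E^\ast_+$ is that the residue $\Pi_{\lambda_0}^X$ is independent of the weight parameter $r$ (once $r$ is large enough for $\lambda_0$ to lie in the Fredholm region), so its range sits in $\bigcap_{r\gg 0}\mathcal H_{rG}$; since $G<0$ away from $E^\ast_+$, this intersection imposes arbitrarily high positive regularity outside $E^\ast_+$, which is what kills the wavefront there. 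With this fix the forward inclusion goes through as you describe, and the rest of your outline is sound.
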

From the explicit local presentation of the singular part of the resolvent $R(\lambda)$ in \cite[Thm.~2]{DG} and the general fact that every Jordan chain of a matrix contains an eigenvector it follows that have for every classical resonance $\lambda_0\in \C$ we have
\begin{equation*}
\mathrm{Res}^1_{X}(\lambda_0)\neq \{0\},\qquad \mathrm{Res}^1_{X^\ast}(\lambda_0)\neq \{0\}.\label{eq:classicalresonancecharacterization}
\end{equation*}
Since the dual stable and unstable bundles $E_+^\ast$, $E_-^\ast$ are transverse to each other, a generalized resonance state $v$ and a generalized coresonance state $v^\ast$ of the same classical resonance $\lambda_0$ satisfy the Hörmander criterion \cite[Thm.~8.2.14]{Hor90}, so that the distributional product $
v\cdot \bar v^\ast\in \D'(S\mathbf{X}_\Gamma)$ 
is well-defined. Since $\supp v\subset \Upsilon_+$ and $\supp v^\ast\subset \Upsilon_-$, we have $\supp v\cdot \bar v^\ast\subset \Upsilon$. Moreover, if $v$ is a classical resonant state {of the resonance $\lambda_0$ and $v^\ast$ a classical co-resonant state of the resonance $\lambda_0'$, then we have
\bq \label{eq:distproductXzero}
X(v\cdot \bar v^*)=(Xv)\cdot \bar v^* + v \cdot X\bar v^*=(\lambda_0'-\lambda_0)(v\cdot \bar v^*),
\eq
which implies \eqref{eq:quasiinvariancePS} (where $\lambda_0=s_0$, $\lambda_0'=\bar s_0'$). In particular, if $\lambda_0=\lambda_0'$, then \eqref{eq:distproductXzero} shows that} the distribution $v\cdot \bar v^\ast$ is invariant under the geodesic flow. 

There is an important subspace of (generalized) classical (co-)resonant states that are invariant under the stable and unstable horocycle flows, i.e., killed by the vector fields $U_\pm$. They are referred to as the (generalized) \emph{first band} (co-)resonant states since by \cite[Prop.~1.3]{GHWa} the classical resonance spectrum has a band structure generated algebraically by the operators $U_\pm$ (outside the real axis the bands consist of lines parallel to the imaginary axis) and the (generalized) first band (co-)resonant states are those associated to classical resonances $\lambda_0$ lying in the first band. 

\subsection{Quantum resonances}\label{sec:quantumresonances}

Here we mainly follow \cite{GHWa}. Recall that ${\Laplace}$ is the non-negative Laplace-Beltrami operator on $\mathbf{X}_\Gamma = \Gamma\backslash\HH^2$.

\subsubsection{Quantum resonant states and their boundary values}
 For each quantum resonance $s_0\in \C$ the residue of $R_{\Laplace}(s)$ at $s=s_0$ is a finite rank operator
$$
\Pi_{s_0}^{\Laplace}\coloneq\mathrm{Res}_{s=s_0}(R_{\Laplace}(s)): C^\infty_c(\mathbf{X}_\Gamma) \rightarrow C^\infty(\mathbf{X}_\Gamma)
$$
that commutes with ${\Laplace}$ and satisfies $\mathrm{Ran}(\Pi_{s_0}^{\Laplace})\subset \ker ({\Laplace}-s_0(1-s_0))^{J(s_0)}$ for some $J(s_0)\in \N$ that we choose minimal.

\begin{definition}[(Generalized) quantum resonant states] \label{defn:Quantum_resonant_states}
   Let $s_0\in \C$ be a quantum resonance. 
        The space of \emph{generalized quantum resonant states} of order $j\geq 1$ is 
         \begin{equation*}
            \mathrm{Res}^{j}_{\Laplace}(s_0)\coloneq\mathrm{Ran}(\Pi_{s_0}^{\Laplace})\cap \ker({\Laplace}-s_0(1-s_0))^{j}\subset C^\infty(\mathbf{X}_\Gamma).
        \end{equation*}
        We call $\mathrm{Res}^{1}_{\Laplace}(s_0)$ the space of \emph{quantum resonant states}  and denote by
    $ \widetilde{\mathrm{Res}}^{1}_{\Laplace}(s_0)\in \Cinft(\mathbb H^2)^\Gamma$ 
    the space of $\Gamma$-invariant lifts.
\end{definition}
It is important to observe that the $\Gamma$-invariant lifts of quantum resonant states have moderate growth: For each quantum resonance $s_0\in \C$ the precise asymptotic estimate of generalized quantum resonant states established in \cite[(1.1)]{GHWa} implies that one has
\bq
\widetilde{\mathrm{Res}}^{1}_{\Laplace}(s_0)\subset\mathcal{E}_{s_0(1-s_0)},\label{eq:rescontainedinE}
\eq
where $\mathcal{E}_{s_0(1-s_0)}$ was defined in \eqref{eq:modspace}. {If $\Re s_0\geq 0$, then by \cite[(1.1)]{GHWa} the quantum resonant states in $\mathrm{Res}^{1}_{\Laplace}(s_0)$ are actually bounded; that is, in terms of $\Gamma$-invariant lifts:
\bq
\widetilde \varphi\in\widetilde{\mathrm{Res}}^{1}_{\Laplace}(s_0),\;\Re s_0\geq 0\implies \norm{\widetilde \varphi}_\infty<\infty.\label{eq:resstatebounded}
\eq}
Furthermore, from the explicit local presentation of the singular part of the resolvent $R_{\Laplace}(s)$ in \cite[Thm.~4.2]{GHWa} and the general fact that every Jordan chain of a matrix contains an eigenvector it follows that for every quantum resonance $s_0\in \C$ we have
\bq
\mathrm{Res}^1_{{\Laplace}}(s_0)\neq \{0\}.\label{eq:quantumresonancecharacterization}
\eq
Given a quantum resonance $s_0$ with $s_0\not\in  -\frac{1}{2}-\frac{1}{2}\N_0$ and a quantum resonant state $\phi\in \mathrm{Res}^{1}_{\Laplace}(s_0)$, Proposition \ref{prop:Poisson} and the fact that $\widetilde{\pi}^*_\Gamma(\phi)$ has moderate growth allow us to define the Helgason boundary value
\bq
T_{\phi}\coloneq(\mathcal P_{s_0-1}^{-1}\circ \widetilde{\pi}^*_\Gamma)(\phi)\in \mathcal D'(\partial_\infty \mathbb H^2).\label{eq:Helgbval}
\eq 

\subsection{Classical-quantum correspondence}
Here we recall the classical-quantum correspondence results from \cite{GHWa}. {This reference does not treat classical co-resonant states. However, this is easily added in Section \ref{sec:cores} below.} Note that this correspondence has been generalized by \cite{Ha20} to open hyperbolic manifolds.  Recall that the (generalized) quantum resonant states are smooth functions on $\mathbf{X}_\Gamma$ while the (generalized) classical resonant states are distributions on $S\mathbf{X}_\Gamma$, and that the sphere bundle projection $\pi:S\mathbf{X}_\Gamma\to \mathbf{X}_\Gamma$ defines a natural pushforward $\pi_\ast:\D'(S\mathbf{X}_\Gamma)\to \D'(\mathbf{X}_\Gamma)$, see \eqref{eq:pushfwd}.

\begin{prop}[{\cite[Thms. 3.3 and 4.7]{GHWa}}] \label{prop:classicalquantum_corresp}For every $\lambda_0\in \C\setminus (-\frac{1}{2}-\frac{1}{2}\N_0)$ the following holds:
$\lambda_0$ is a classical first band resonance if, and only if, $\lambda_0+1$ is a quantum resonance. In this case, the natural pushforward $\pi_\ast:\D'(S\mathbf{X}_\Gamma)\to \D'(\mathbf{X}_\Gamma)$ restricts for each $j\in \N$ to a linear isomorphism of finite-dimensional vector spaces
    \[
    \pi_\ast: \mathrm{Res}^j_{X}(\lambda_0)\cap \ker U_-\stackrel{\cong}{\longrightarrow}\mathrm{Res}^{j}_{\Laplace}(\lambda_0+1).
    \]
\end{prop}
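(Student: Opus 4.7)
The plan is to pass to $\Gamma$-invariant lifts on $S\mathbb H^2$ and establish the correspondence through boundary distributions on $\partial_\infty \mathbb H^2$. The eigenvalue condition matches correctly: with $s_0 := \lambda_0 + 1$ one has $s_0(1-s_0) = -\lambda_0(1+\lambda_0)$, precisely the eigenvalue of $\Laplace_{\mathbb H^2}$ on the image $\mathcal{E}_{-\lambda_0(1+\lambda_0)}$ of the Poisson-Helgason transform $\mathcal{P}_{\lambda_0}$ in Proposition~\ref{prop:Poisson}. The hypothesis $\lambda_0 \notin -\tfrac{1}{2} - \tfrac{1}{2}\N_0$ ensures simultaneously that $\lambda_0 \notin -\N$ (so $\mathcal{P}_{\lambda_0}$ is an isomorphism of topological vector spaces) and that $s_0 \notin \tfrac{1}{2} - \tfrac{1}{2}\N_0$ (so the Helgason boundary value \eqref{eq:Helgbval} is well defined on $\mathrm{Res}^{1}_{\Laplace}(s_0)$).

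For the forward direction I would take $v \in \mathrm{Res}^{j}_X(\lambda_0) \cap \ker U_-$, lift to $\tilde v \in \mathcal{D}'(S\mathbb H^2)^\Gamma$ via \eqref{eq:isomorphism_pi_Gamma}, and analyse $\tilde v$ through the algebraic constraints $U_- \tilde v = 0$ and $(-X - \lambda_0)^j \tilde v = 0$ in conjunction with the Bruhat decomposition \eqref{eq:gBruhat}. Using the trivialization $F_+\colon S\mathbb H^2 \stackrel{\cong}{\to} \mathbb H^2 \times \partial_\infty \mathbb H^2$ of \eqref{eq:Fpm} and the fact that $\Phi_+^{1+\lambda_0}$ is an eigenfunction of $-X$ with eigenvalue $\lambda_0$ (a direct computation from the Iwasawa formulas \eqref{eq:Iwasawaexplicit} applied to $\Phi_+=p\circ F_+$), one shows that
\[
\tilde v \;=\; \sum_{k=0}^{j-1} (\log \Phi_+)^k\, \Phi_+^{1+\lambda_0}\, F_+^{\ast}(1 \otimes T_k)
\]
for uniquely determined distributions $T_0,\dots,T_{j-1} \in \mathcal{D}'(\partial_\infty \mathbb H^2)$. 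Integration over the fiber $S_x\mathbb H^2 \cong K$ (Lemma~\ref{lem:fiberdiffeo}), combined with a change of variables on $K = \partial_\infty \mathbb H^2$ using the Poisson-Jacobian identity, then yields $\widetilde\pi_\Gamma^{\ast}(\pi_\ast v)(x) = \mathcal{P}_{\lambda_0}(T_0)(x) + \cdots \in \mathcal{E}_{s_0(1-s_0)}$; together with $\Gamma$-invariance and the operator identity $(\Laplace - s_0(1-s_0))^j \pi_\ast = \pi_\ast (-X-\lambda_0)^j(\,\cdot\,)$ restricted to $\ker U_-$ (obtained from the Casimir relation in $\mathfrak{g}$), this places $\pi_\ast v$ in $\mathrm{Res}^{j}_{\Laplace}(s_0)$.

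For the inverse, given $\phi \in \mathrm{Res}^{j}_{\Laplace}(s_0)$ I would form $T_\phi = \mathcal{P}_{s_0-1}^{-1}(\widetilde\pi_\Gamma^{\ast} \phi)$ as in \eqref{eq:Helgbval} and, for $j = 1$, set $\tilde v := \Phi_+^{1+\lambda_0}\, F_+^{\ast}(1 \otimes T_\phi)$, with polynomial-in-$\log \Phi_+$ corrections for $j > 1$ read off from a Jordan-block analysis of $\Laplace - s_0(1-s_0)$ on $\mathrm{Res}^{j}_{\Laplace}(s_0)$. The $G$-equivariance of $\mathcal{P}_\lambda$ in Proposition~\ref{prop:Poisson} converts the $\Gamma$-invariance of $\phi$ into $\Gamma$-equivariance of $T_\phi$ with a Jacobian cocycle, which combines with the transformation behavior of $\Phi_+^{1+\lambda_0}$ under $\Gamma$ to produce $\Gamma$-invariance of $\tilde v$ and hence descent to $v \in \mathcal{D}'(S\mathbf X_\Gamma)$. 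The two maps are mutual inverses by the uniqueness of $T_\phi$ (Proposition~\ref{prop:Poisson}) and of the $T_k$ in the lift formula.

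The main obstacle is verifying the support and wavefront conditions of Lemma~\ref{lem:ResResetoile} for the reconstructed $v$, namely $\mathrm{supp}(v) \subset \Upsilon_+$ and $\mathrm{WF}(v) \subset E_+^{\ast}$. These reduce to the nontrivial fact that for convex-cocompact $\Gamma$ the Helgason boundary value $T_\phi$ of a quantum resonant state is supported on the limit set $\Lambda_\Gamma \subset \partial_\infty \mathbb H^2$; establishing this requires the precise meromorphic continuation and boundary asymptotics of $R_{\Laplace}(s)$ from \cite{MazzeoMelrose, Guillope} exploited in \cite{GHWa}, in particular the estimate \cite[(1.1)]{GHWa}. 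Once this support property is in hand, $\mathrm{WF}(\tilde v) \subset E_+^{\ast}$ follows from the standard product and pullback calculus for wavefront sets together with the observation that $F_+$ maps $E_+^{\ast}$ precisely to the conormal bundle of the second factor in $\mathbb H^2 \times \partial_\infty \mathbb H^2$.
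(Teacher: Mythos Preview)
The proposition is imported from \cite{GHWa} and the present paper does not give a proof; it only records, in \eqref{eq:I_minus}, the explicit form of the inverse $\mathbf I_-(\phi)=(\pi_\Gamma^\ast)^{-1}\bigl(\Phi_-^{\,s_0-1}B_-^\ast T_\phi\bigr)$ extracted from \cite[Thms.~3.3,~4.5]{GHWa}. Your overall strategy---pass to $\Gamma$-invariant lifts, represent first-band resonant states as weighted pullbacks of boundary distributions, invert via the Poisson--Helgason transform, and single out the support of $T_\phi$ in the limit set $\Lambda_\Gamma$ as the analytic crux for the conditions $\supp v\subset\Upsilon_+$ and $\mathrm{WF}(v)\subset E_+^\ast$---is precisely the route of \cite{GHWa}.

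There is, however, a systematic sign swap in your explicit formulas that makes them fail as written. Resonant states in $\ker U_-$ are built from $\Phi_-$ and $B_-$ (equivalently $F_-$), not from $\Phi_+$ and $F_+$. Using $w_0^{-1}U_-w_0=-U_+$ one checks directly that $B_-^\ast T$ and $\Phi_-$ are annihilated by $U_-$, whereas $B_+^\ast T$ and $\Phi_+$ are not: for instance, at $g=\exp(sU_+)$ one has $\frac{d}{dt}\big|_{t=0}\alpha\bigl(\mathscr A(g\exp(tU_-))\bigr)=2s\neq 0$. Consequently your candidate $\Phi_+^{1+\lambda_0}F_+^\ast(1\otimes T_\phi)$ generically lies outside $\ker U_-$ and cannot serve as the inverse of $\pi_\ast$. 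Compare \eqref{eq:I_minus}: the correct lift is $\Phi_-^{\lambda_0}B_-^\ast T_\phi$ (note also the exponent $\lambda_0=s_0-1$, not $1+\lambda_0$). The $(\Phi_+,B_+)$-construction you wrote down is exactly the one relevant to \emph{co}-resonant states in $\ker U_+$, cf.~Proposition~\ref{prop:dual} and \eqref{eq:I_plus}. With this $\pm$ swap and the exponent corrected, your outline matches the argument of \cite{GHWa}.
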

As an immediate consequence of this result and \eqref{eq:quantumresonancecharacterization} we get that a classical resonance $\lambda_0\in \C\setminus (-\frac{1}{2}-\frac{1}{2}\N_0)$ is a first band resonance if, and only if, {$\mathrm{Res}^1_{X}(\lambda_0)\cap \ker U_-\neq \{0\}$.  Given a quantum resonance $s_0$ with $s_0\not\in  -\frac{1}{2}-\frac{1}{2}\N_0$, the result of Guillarmou-Hilgert-Weich \cite[Thms.~3.3 and 4.5]{GHWa} comes with an explicit description of the inverse 
\bqn
\mathbf I_-\coloneq\pi_\ast^{-1}:\mathrm{Res}^{1}_{\Laplace}(s_0)\to \mathrm{Res}^1_{X}(s_0-1)\cap \ker U_-
\eqn
of the isomorphism from Proposition \ref{prop:classicalquantum_corresp} for $j=1$:
For $\lambda\in \C$, define the operator 
\begin{align*}
    \mathcal{Q}_{\lambda,-}:\D'(\partial_\infty \mathbb{H}^2) &\rightarrow  \widetilde{\mathrm{Res}}^{1}_X(\lambda)\cap \mathrm{ker}\, U_-
\end{align*} given by
$\mathcal{Q}_{\lambda,-}(u)\coloneq\Phi_-^{\lambda}B_-^*(u),$
for $u \in \D'(\partial_\infty \mathbb{H}^2)$, where $\Phi_-$ was defined in \eqref{eq:defPhipm} and $B^*_-: \mathcal{D}'(\partial_\infty \mathbb{H}^2) \to \mathcal{D}'(S \mathbb{H}^2)$ is the pullback along the initial point map defined in  \eqref{eq:defB}. Then one has
\begin{equation} \label{eq:I_minus}
    \mathbf I_-(\phi) =  \big((\pi^*_\Gamma)^{-1} \circ \mathcal{Q}_{s_0-1,-}\big) (T_{\phi}),
\end{equation}
where $T_{\phi}\in \D'(\partial_\infty \mathbb{H}^2)$ is the Helgason boundary value of $\phi \in \mathrm{Res}^{1}_{\Laplace}(s_0)$ introduced in \eqref{eq:Helgbval} and $\pi^*_\Gamma$ is the isomorphism from \eqref{eq:isomorphism_pi_Gamma}.

\subsubsection{Quantum-classical correspondence for co-resonant states}\label{sec:cores}
Consider the antipodal involution $\iota:S\mathbf X_\Gamma\to S\mathbf X_\Gamma$, $\xi\mapsto -\xi$. By pullback, it acts on smooth functions and distributions and satisfies $\iota^\ast X = -X \iota^\ast$, which implies that $\iota(\Upsilon_+)=\Upsilon_-$. Moreover, the adjoint of the derivative of $\iota$ interchanges $E_+^\ast$ and $E_-^\ast$ and we have $\iota^\ast U_+ = U_- \iota^\ast$. In view of Lemma~\ref{lem:ResResetoile} this implies that for all $j\in \N$ and $\lambda_0\in \C$ we have $\iota^\ast \mathrm{Res}^j_{X}(\lambda_0) = \mathrm{Res}^j_{X^\ast}(\overline\lambda_0)$ and
\bq
\iota^\ast (\mathrm{Res}^j_{X}(\lambda_0)\cap \ker U_-) = \mathrm{Res}^j_{X^\ast}(\overline\lambda_0)\cap \ker U_+.\label{eq:ResjResj}
\eq
On the other hand, since the natural pushforward $\pi_\ast:\D'(S\mathbf{X}_\Gamma)\to \D'(\mathbf{X}_\Gamma)$ is given on smooth functions by integration over the fiber and this determines it uniquely, one has
\bq
\pi_\ast\circ \iota^\ast = \pi_\ast. \label{eq:pushfowardiota}
\eq
We also have $\iota^\ast\Phi_+=\Phi_-$ and $\iota^\ast\circ B^*_+=B^*_-$, where $\Phi_\pm$ was defined in \eqref{eq:defPhipm} and $B^*_\pm: \mathcal{D}'(\partial_\infty \mathbb{H}^2) \to \mathcal{D}'(S \mathbb{H}^2)$ is the pullback along the initial/end point map defined in  \eqref{eq:defB}. Combining this with \eqref{eq:ResjResj} and \eqref{eq:pushfowardiota}, Proposition \ref{prop:classicalquantum_corresp} implies the following quantum-classical corresponence for co-resonant states:
\begin{prop}[Dual version of {Proposition \ref{prop:classicalquantum_corresp}}] \label{prop:dual}
For every classical first band resonance $\lambda_0\in \C\setminus (-\frac{1}{2}-\frac{1}{2}\N_0)$ the natural pushforward $\pi_\ast:\D'(S\mathbf{X}_\Gamma)\to \D'(\mathbf{X}_\Gamma)$ restricts for each $j\in \N$ to a linear isomorphism of finite-dimensional vector spaces 
    \[
    \pi_\ast: \mathrm{Res}^j_{X^\ast}(\lambda_0)\cap \ker U_+\stackrel{\cong}{\longrightarrow}\mathrm{Res}^{j}_{\Laplace}(\overline\lambda_0+1).
    \]
    For $j=1$, the inverse $\mathbf I_+\coloneq\pi_\ast^{-1}:\mathrm{Res}^{1}_{\Laplace}(s_0)\to \mathrm{Res}^1_{X^\ast}(\bar s_0-1)\cap \ker U_+$ is given by 
\begin{equation} \label{eq:I_plus}
    \mathbf I_+(\phi) =  \big((\pi^*_\Gamma)^{-1} \circ \mathcal{Q}_{\bar s_0-1,+}\big) (\overline T_{\phi}),
\end{equation}
where we put $\mathcal{Q}_{\lambda,+}(u)\coloneq\Phi_+^{\lambda}B_+^*(u)$ for $u \in \D'(\partial_\infty \mathbb{H}^2)$ and $\lambda \in \C$,  $T_{\phi}\in \D'(\partial_\infty \mathbb{H}^2)$ is the Helgason boundary value of $\phi \in \mathrm{Res}^{1}_{\Laplace}(s_0)$ introduced in \eqref{eq:Helgbval}, and $\pi^*_\Gamma$ is as in \eqref{eq:isomorphism_pi_Gamma}.
\end{prop}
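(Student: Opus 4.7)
The plan is to derive Proposition~\ref{prop:dual} as a direct corollary of Proposition~\ref{prop:classicalquantum_corresp} by transporting both its isomorphism and its explicit inverse formula along the antipodal involution $\iota$. All the required compatibilities have been gathered just above the statement: the identification \eqref{eq:ResjResj} matching resonant and co-resonant spaces, the identity $\pi_\ast\circ\iota^\ast=\pi_\ast$ of \eqref{eq:pushfowardiota}, and the relations $\iota^\ast\Phi_-=\Phi_+$ and $\iota^\ast\circ B_-^\ast = B_+^\ast$ controlling how the $\mathcal{Q}$-operators transform under $\iota^\ast$.

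For the isomorphism assertion, the first step is to check that the hypothesis is preserved under complex conjugation: the excluded set $-\tfrac12-\tfrac12\N_0$ is real, and by Proposition~\ref{prop:classicalquantum_corresp} the first-band condition is equivalent to $\lambda_0+1$ being a quantum resonance. Since $\Delta$ is real and self-adjoint, the quantum resonance spectrum is symmetric under $s\mapsto\bar s$: the identity $R_\Delta(\bar s)=\overline{R_\Delta(s)}$ holds on the half-plane of $L^2$-convergence and propagates to all of $\C$ by uniqueness of meromorphic continuation. Hence $\bar\lambda_0$ is again a first-band classical resonance, and Proposition~\ref{prop:classicalquantum_corresp} applied to it supplies the isomorphism
\begin{equation*}
\pi_\ast\colon\mathrm{Res}^j_X(\bar\lambda_0)\cap\ker U_-\xrightarrow{\;\cong\;}\mathrm{Res}^j_\Laplace(\bar\lambda_0+1).
\end{equation*}
Setting $\mu_0=\bar\lambda_0$ in \eqref{eq:ResjResj} provides a linear isomorphism $\iota^\ast$ from the source of this map onto $\mathrm{Res}^j_{X^\ast}(\lambda_0)\cap\ker U_+$, and combining with $\pi_\ast=\pi_\ast\circ\iota^\ast$ from \eqref{eq:pushfowardiota} identifies the restriction of $\pi_\ast$ to the co-resonant space with precisely the same isomorphism, viewed on the other side of $\iota^\ast$. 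This settles the first half.

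The inverse formula for $j=1$ is then read off by applying $\iota^\ast$ and the appropriate complex conjugation to the formula \eqref{eq:I_minus} for $\mathbf{I}_-$. Given $\phi\in\mathrm{Res}^1_\Laplace(s_0)$ with $s_0=\bar\lambda_0+1$, the Helgason boundary value of the complex conjugate eigenfunction satisfies $T_{\bar\phi}=\overline{T_\phi}$: this follows from the symmetry $\overline{\mathcal{P}_\mu(\omega)}=\mathcal{P}_{\bar\mu}(\bar\omega)$ of the Poisson-Helgason transform, which is evident from its integral definition, combined with the injectivity of $\mathcal{P}_{\bar s_0-1}$ on the image space $\mathcal{E}_{-(\bar s_0-1)\bar s_0}$ supplied by Proposition~\ref{prop:Poisson}. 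Feeding this into \eqref{eq:I_minus} (evaluated at $\bar\phi$) and then applying $\iota^\ast$ with the substitutions $\iota^\ast\Phi_-=\Phi_+$ and $\iota^\ast\circ B_-^\ast = B_+^\ast$ produces exactly the right-hand side of \eqref{eq:I_plus}; uniqueness of the inverse of the isomorphism established in the first half then identifies this expression with $\mathbf{I}_+(\phi)$. The main external input beyond formal diagram chasing is the conjugation symmetry of the quantum resonance spectrum used at the start, which is standard for self-adjoint operators but must be invoked explicitly.
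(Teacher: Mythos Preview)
Your approach via the antipodal involution $\iota^\ast$ is precisely the paper's: the paragraph preceding the statement collects the compatibilities \eqref{eq:ResjResj}, \eqref{eq:pushfowardiota}, and the $\Phi_\pm$, $B_\pm^\ast$ relations, and the paper simply asserts that combining these with Proposition~\ref{prop:classicalquantum_corresp} yields the result. Your supplementary observation that the quantum resonance spectrum is conjugation-symmetric (so that $\bar\lambda_0$ is again a first band resonance) is a detail the paper leaves implicit but that is indeed needed to invoke Proposition~\ref{prop:classicalquantum_corresp} at the conjugate parameter.

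However, your derivation of the explicit inverse formula contains a genuine gap in the final step. You compute $\iota^\ast\mathbf{I}_-(\bar\phi)$ and correctly obtain the right-hand side of \eqref{eq:I_plus}, but then claim that ``uniqueness of the inverse'' identifies this with $\mathbf{I}_+(\phi)$. This fails: by \eqref{eq:pushfowardiota} one has $\pi_\ast\bigl(\iota^\ast\mathbf{I}_-(\bar\phi)\bigr)=\pi_\ast\mathbf{I}_-(\bar\phi)=\bar\phi$, not $\phi$, so $\iota^\ast\mathbf{I}_-(\bar\phi)$ is \emph{not} the preimage of $\phi$ under $\pi_\ast$. Relatedly, applying \eqref{eq:ResjResj} with parameter $\bar s_0-1$ shows that $\iota^\ast\mathbf{I}_-(\bar\phi)$ lands in $\mathrm{Res}^1_{X^\ast}(s_0-1)\cap\ker U_+$, not in the stated codomain $\mathrm{Res}^1_{X^\ast}(\bar s_0-1)\cap\ker U_+$. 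The direct transport $\iota^\ast\mathbf{I}_-(\phi)$ (without conjugating $\phi$) does land in the correct space and satisfies $\pi_\ast\iota^\ast\mathbf{I}_-(\phi)=\phi$, but then the substitutions yield $(\pi_\Gamma^\ast)^{-1}\mathcal{Q}_{s_0-1,+}(T_\phi)$ rather than the displayed \eqref{eq:I_plus}. The resulting tension with the paper's formula is tied to its sesquilinear conventions (cf.\ the remark immediately following the proposition); but independently of how that is reconciled, your uniqueness argument as written does not close.
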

Finally, let us point out an important issue that is often a source of confusion:
\begin{rem}[Sesquilinear vs.\ bilinear pairings]
    In microlocal analysis there is always the question whether to define pairings in a bilinear or a sesquilinear way (e.g.\ using an $L^2$-product).   
    Since the Wigner distributions are traditionally defined using an $L^2$-pairing, we use the sesquilinear convention for pairings throughout this paper. This is the reason why complex conjugates appear in Equations \eqref{eq:vvetoile} and  \eqref{eq:RanRan} and in Proposition \ref{prop:dual}.
\end{rem}
}

\section{Wigner and Patterson-Sullivan distributions}
\label{sect:relation_W_PS}
In this section, we describe the Wigner and Patterson-Sullivan distributions on $S\mathbf{X}_\Gamma$ and study their asymptotics, to ultimately show that they are equivalent (Section \ref{sect:asymptotic_equiv}).

\subsection{Wigner distributions}
\label{sect:Wigner}

Wigner distributions are a way to study  \emph{microlocal} properties of quantum resonant states $\phi\in C^\infty(\mathbf{X}_\Gamma)$ such as their oscillations in different directions in the phase space $T^\ast \mathbf X_\Gamma$ expressed by their distribution in the sphere bundle $S\mathbf X_\Gamma$ (which we identify with the co-sphere bundle using the metric). While these are intrinsic properties of the resonant states, the Wigner distributions associated to them are defined with respect to a choice of \emph{quantization map}, that is, a pseudo-differential operator calculus.  In order to define Wigner distributions, we quantize compactly supported smooth functions $u\in \CT(S \mathbf X_\Gamma)$ on the sphere bundle of the convex-cocompact hyperbolic surface $\mathbf{X}_\Gamma$. Such functions are very tame \emph{symbols} in the microlocal jargon\footnote{Note that, depending on the formalism, one regards the symbol functions $u$ as being smoothly extended to functions in $\CT(T^\ast \mathbf X_\Gamma)$ by identifying $S \mathbf X_\Gamma$ with the co-sphere bundle in $T^\ast \mathbf X_\Gamma$ using the metric and multiplying $u$ by a cutoff function.} and for their quantization, there are no essential differences between the most common pseudodifferential calculi (see \cite[Sec.~6]{HHS12} for a brief overview). In our setting, the equivariant pseudodifferential calculus developed by Zelditch in \cite{Zelditch19860} suggests itself because it has been specially designed for the hyperbolic plane $\mathbb H^2$ and its quotients by subgroups of  $G=\PSL(2,\R)$, providing an efficient framework for computations. 
It provides a quantization map
$$
\mathrm{Op}:
\begin{cases}
    C^\infty_c(T^\ast \mathbf X_\Gamma) &\rightarrow\quad \mathcal{B}(\mathcal D'(\mathbf{X}_\Gamma),\CT(\mathbf{X}_\Gamma))\\
    \;\;\;\;\;\; u &\mapsto\quad \mathrm{Op}(u)
\end{cases}
$$
associating to every compactly supported smooth symbol function $u\in \CT(T^\ast \mathbf X_\Gamma)$ a continuous operator $\mathrm{Op}(u):\mathcal D'(\mathbf{X}_\Gamma)\to \CT(\mathbf{X}_\Gamma)$. Note that the potential non-compactness of  $\mathbf{X}_\Gamma$ is unproblematic here since we are only quantizing symbols with compact support. Recalling that $\Cinft(\mathbf{X}_\Gamma)$ is embedded into $\mathcal D'(\mathbf{X}_\Gamma)$ using the sesquilinear pairing $\langle \cdot, \cdot \rangle_{L^2(\mathbf{X}_\Gamma)}$, each linear operator of the form $\mathrm{Op}(u)$ restricts to an operator
\[
\mathrm{Op}(u):\Cinft(\mathbf{X}_\Gamma)\to \CT(\mathbf{X}_\Gamma).
\] 
Now, very conveniently, this calculus has been adapted in \cite[Sec.~3.2]{AZ07} to obtain a direct quantization $\mathrm{Op}(u)$ of symbol functions $u\in \CT(S \mathbf X_\Gamma)$ on the (co-)sphere bundle without extending them to functions in $C^\infty_c(T^\ast \mathbf X_\Gamma)$ using a cutoff. We will only need to consider the action of $\mathrm{Op}(u)$ on a quantum resonant state $\phi\in \mathrm{Res}^{1}_{\Laplace}(s_0)$ for a quantum resonance $s_0\in \C$ with $s_0\not\in  -\frac{1}{2}-\frac{1}{2}\N_0$. By \cite[(3.15)]{AZ07}, this action can be defined as follows: Given $u\in \CT(S \mathbf X_\Gamma)$, define 
\bq \label{eq:tildea}
\tilde{u}\coloneq u \circ \pi_\Gamma \in \Cinft(S\mathbb H^2),
\eq
where we recall that 
$\pi_\Gamma:S\mathbb H^2\to S \mathbf X_\Gamma$ is the canonical projection from \eqref{eq:pi_Gamma}. Then
\bq\begin{split}
    \mathrm{Op}(u) \phi(x) 
    &\coloneq \big((\widetilde{\pi}_\Gamma^{-1})^*\circ \mathcal P_{s_0-1}\big)\big( \tilde u\circ F_+^{-1}(x,\cdot)  T_{\phi}\big)\\
    &\;= \big((\widetilde{\pi}^{-1}_\Gamma)^*\circ\mathcal P_{s_0-1}\circ m_{\tilde u \circ F_+^{-1}(x,\cdot)} \circ\mathcal P_{s_0-1}^{-1}\circ \widetilde{\pi}^*_\Gamma\big)(\phi),\end{split}\label{eq:explicitOpa}
\eq
 where for $f\in \Cinft(\partial_\infty \mathbb H^2)$ we write $m_f:\D'(\partial_\infty \mathbb H^2)\to \D'(\partial_\infty \mathbb H^2)$ for  multiplication by $f$ and we use the isomorphism $\widetilde\pi_\Gamma^\ast: \Cinft( \mathbf X_\Gamma)\stackrel{\cong}{\longrightarrow} \Cinft(\mathbb H^2)^\Gamma$ introduced in \eqref{eq:isomorphism_pi_Gamma}.\footnote{The reason why \cite[(3.15)]{AZ07} looks simpler than Equation \eqref{eq:explicitOpa} at first glance is that we do not make any implicit identifications. Thus \eqref{eq:explicitOpa} unwinds the second identification explained in \cite[Sec.~2.1]{AZ07} and the identifications between elements on $\Gamma$-quotients and their $\Gamma$-invariant lifts made in \cite{AZ07}.} Note that $\tilde{u} \circ F_+^{-1} \in \Cinft(\HH^2\times\partial_\infty \mathbb \HH^2),$ where $F_+:S\mathbb H^2\stackrel{\cong}{\to} \mathbb H^2\times\partial_\infty \mathbb H^2$ is the end point trivialization from \eqref{eq:Fpm}.

\begin{definition}[{Wigner distributions, c.f. \cite[Eq.~(1.1), Sec.~3.2.]{AZ07}, \cite[Sec.~6]{HHS12}, \cite[Def.~A.1.]{GHWb}}] \label{defn:Wigner}
    Let  $s_0,s_0' \in \C$ be quantum resonances    and $\phi\in \mathrm{Res}^1_{\Laplace}(s_0)$, $\phi'\in \mathrm{Res}^1_{\Laplace}(s_0')$. 
    The \emph{Wigner distribution} $W_{\phi,\phi'}\in \D'(S\mathbf{X}_\Gamma)$ associated to $\phi,\phi'$ is defined by
    \begin{equation*}
         W_{\phi,\phi'}(u) \coloneq \langle \mathrm{Op}(u) \phi, \phi'\rangle_{L^2(\mathbf{X}_\Gamma)},\qquad u\in \CT(S\mathbf{X}_\Gamma).
    \end{equation*}
\end{definition}
Here the $L^2$-pairing is well-defined thanks to the compact support of the smooth function $\mathrm{Op}(u) \phi$.  

If $s_0,s_0' \in \C\setminus (-\frac{1}{2}-\frac{1}{2}\N_0)$, then by \eqref{eq:explicitOpa} and recalling the Definition \ref{def:Poisson} of the Poisson transform, we have the following explicit expression of $W_{\phi,\phi'}$ in terms of Helgason boundary values (cf.~\cite[proof of Lem.~4.1]{AZ07}, \cite[(6.54)]{Sch10}): For $\phi\in \mathrm{Res}^1_{\Laplace}(s_0)$,  $\phi'\in \mathrm{Res}^1_{\Laplace}(s_0')$, and $u\in \CT(S\mathbf{X}_\Gamma)$,
\bq
W_{\phi,\phi'}(u)=\widetilde W_{\phi,\phi'}(\chi \tilde{u}),\label{eq:Wexplicit00}
\eq
where $\chi\in C^\infty_c(S\mathbb{H}^2)$ is a smooth fundamental domain cutoff near $\pi_\Gamma^{-1}(\supp u)$ as in Definition \ref{def:smoothfundcutoff} and the $\Gamma$-invariant distribution $\widetilde W_{\phi,\phi'}\in \D'(S\mathbb H^2)^\Gamma$ is defined by
\begin{equation} \label{eq:explicit_W}
   \widetilde W_{\phi,\phi'}(f)
    := \int_{\partial_\infty \mathbb{H}^2\times \partial_\infty \mathbb{H}^2} \Big( \int_{\mathbb{H}^2} (f  \circ F^{-1}_+)(x,b) e^{s_0\eklm{x,b}+\bar s_0'\eklm{x,b'}} \, dx\Big) T_{\phi}(b) \overline T_{\phi'}(b'),
\end{equation}
with $T_{\phi},T_{\phi'}\in \D'(\partial_\infty \mathbb{H}^2)$  the Helgason boundary values defined in \eqref{eq:Helgbval}, and the integral $\int_{\partial_\infty \mathbb{H}^2\times \partial_\infty \mathbb{H}^2}$ understood in the distributional sense (i.e.,\ applying the distribution $T_{\phi}\otimes \overline T_{\phi'}$). Note that the choice of $\chi$ in \eqref{eq:Wexplicit00} is irrelevant by Lemma \ref{lem:indepchi}.

\begin{rem}[Scaling conventions] \label{rem:scaling}
As already addressed in Remark \ref{rem:conventions}, in the subject of harmonic analysis there is the notorious problem of different  conventions. This affects us as follows: In the works \cite{GHWa} 
and \cite{AZ07} the conventions are compatible and agree with ours. However, in \cite{GHWb} the inner product used on $\g$ and hence the Riemannian metric on $\mathbb H^2$ differs from ours by a factor of $2$, with the effect that if we denote the (non-negative) Laplacian used in \cite{GHWb} by ${\Laplace}_{\mathbb H^2}^{\mathrm{GHW21}}$, then we have ${\Laplace}_{\mathbb H^2}^{\mathrm{GHW21}}=2{\Laplace}_{\mathbb H^2}$, where the right-hand side is our Laplacian. If for $s\in \C$ we put $\mu\coloneq s\varrho\in \aL_\C ^\ast$, one finds
\begin{eqnarray*}
    \mathrm{Eig}_{{\Laplace}_{\mathbb H^2}^{\mathrm{GHW21}}}(\mu)\cap C^\infty_\mathrm{mod}(\mathbb H^2) 
    &=& \Big\{f \in C_\mathrm{mod}^\infty(\mathbb H^2)\,|\, \Big(2{\Laplace}_{\mathbb H^2}-\frac{1}{2}+\frac{s^2}{2}\Big)(f)=0\Big\} \\
    &=&\mathcal E_{(\frac{1}{2}+\frac{s}{2})(\frac{1}{2}-\frac{s}{2})}
    =\mathcal E_{s_0(1-s_0)},
\end{eqnarray*}
where for $s_0$ we get the two solutions $s_0=\frac{1\pm s}{2}$. Taking the solution with the $+$ sign, our conventions are compatible with those in \cite{GHWb}.
\end{rem}

\subsubsection{Asymptotic parameter, Radon transform, and intertwining operator}

To systematically study the distribution $\widetilde W_{\phi,\phi'}\in \D'(S\mathbb H^2)^\Gamma$ defined in \eqref{eq:explicit_W}, identify $G=S \mathbb H^2$ by \eqref{eq:GSH2diffeo}, let $f\in \CT(G)$  and let $J_{s_0,s'_0}(f)\in \Cinft(\partial_\infty \mathbb{H}^2\times \partial_\infty \mathbb{H}^2)$ be defined by the inner integral over $\mathbb{H}^2$:
\bq
J_{s_0,s'_0}(f)(b,b') \coloneq\int_{\mathbb{H}^2} (f \circ F^{-1}_+)(x,b) e^{s_0\eklm{x,b}+\bar s_0'\eklm{x,b'}} \, dx.\label{eq:Jintegral}
\eq
Before we start rewriting it, we note that by writing $s_0=q+i r$, $s_0'=q'+i r'$ and assuming that $r\neq r'$,  the above integral acquires the shape of an oscillatory integral 
\[
J_{s_0,s'_0}(f)(b,b') =\int_{\mathbb{H}^2}  e^{i\frac{r-r'}{2}\Psi_{b,b',r,r'}(x)}  f_{b,b',q,q'}(x)\, dx
\]
with the asymptotic parameter $\frac{r-r'}{2}$, the phase function $\Psi_{b,b',r,r'}\in\Cinft(\mathbb H^2)$ given by
\begin{equation*}
    \Psi_{b,b',r,r'}(x)=\frac{2r}{r-r'}\eklm{x,b}-\frac{2r'}{r-r'}\eklm{x,b'},
\end{equation*}
and the amplitude $f_{b,b',q,q'}\in \CT(\mathbb H^2)$ given by
\begin{equation*}
    f_{b,b',q,q'}(x)\coloneq(f\circ F^{-1}_+)(x,b)e^{q\eklm{x,b}+q'\eklm{x,b'}}.
\end{equation*}
In order to compute an asymptotic expansion of  $J_{s_0,s'_0}(f)(b,b')$ using the Theorem of Stationary Phase \cite[Thm.~7.7.5]{Hor90}, one needs to find the set of critical points of the phase function $\Psi_{b,b',r,r'}$, i.e., the vanishing locus of the differential $d\Psi_{b,b',r,r'}$. This set has been computed in \cite[Lem. 5.4]{HHS12}, which in our rank one situation says the following (the parameters $\nu,\nu'$ in \cite[(5.1)]{HHS12} are given in our notation by $\nu=\frac{4r}{r-r'}\varrho$ and $\nu'=-\frac{4r'}{r-r'}\varrho$):
\begin{lem}[{\cite[Lem. 5.4]{HHS12}}]If $r\neq r'$, then one has $d\Psi_{b,b',r,r'}=0$ if, and only if, $r=-r'$, $(b,b')=(B_+(g),B_-(g))$, and $x\in gAK$ for some $g\in G$. 
\end{lem}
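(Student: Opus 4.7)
The approach is to translate the vanishing condition $d\Psi_{b,b',r,r'} = 0$ into a linear equation in the tangent space $T_x \mathbb H^2$ and to exploit the two-dimensionality of $\mathbb H^2$.

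First I would establish the geometric content of the horocycle bracket: for fixed $b \in \partial_\infty \mathbb H^2$ the function $\langle \cdot, b\rangle$ is a signed multiple of the Busemann function centered at $b$, so its gradient $\nabla_x \langle x, b\rangle$ at any point $x \in \mathbb H^2$ is a tangent vector of constant length $\|\varrho\| = \tfrac12$ whose direction is along the oriented geodesic from $x$ to $b$. This follows by differentiating the defining formula $\langle gK, k\rangle = -\varrho(\mathscr A(g^{-1}k))$ from \eqref{eq:horocycle_bracket} along curves $g\exp(tY)K$ with $Y \in \p$ and applying the standard derivative formulas for the Iwasawa projection $\mathscr A$ in \eqref{eq:Iwasawaexplicit}; using $G$-equivariance one may first reduce to the base point $x = o$, where the computation is immediate.

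Next I would write the critical point equation $d_x\Psi_{b,b',r,r'} = 0$ as
\[
\tfrac{2r}{r-r'}\,\nabla_x\langle x, b\rangle + \tfrac{2r'}{r-r'}\,\nabla_x\langle x, b'\rangle \;=\; 0
\]
in $T_x\mathbb H^2$ (up to a sign convention in the second coefficient). Since the two gradients have the same length, this linear combination can vanish only if they are collinear. Because $\mathbb H^2$ is two-dimensional and the two gradients point along the geodesics from $x$ toward $b$ and toward $b'$ respectively, collinearity with $b \neq b'$ forces $x$ to lie on the unique bi-infinite geodesic through $b$ and $b'$, on which the two gradients are then anti-parallel. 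Matching coefficients in the resulting scalar equation yields $r + r' = 0$, i.e.\ $r = -r'$.

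Finally I would translate ``$x$ lies on the geodesic with forward endpoint $b$ and backward endpoint $b'$'' into the algebraic condition $x \in gAK$, using the diffeomorphism $\psi: G/A \stackrel{\cong}{\longrightarrow} (\partial_\infty \mathbb H^2)^{(2)}$ of \eqref{eq:psi}. Namely, choose $g\in G$ with $\psi(gA) = (b, b')$; then the $A$-orbit $gA$ parametrizes the unit-speed geodesic lifts in $S\mathbb H^2 \cong G$ (see Section~\ref{sec:geodesicflow}), so its image $gAK$ in $\mathbb H^2 = G/K$ is precisely that geodesic. Conversely, any $g_0 \in G$ with $x = g_0 K$ and $(B_+(g_0), B_-(g_0)) = (b, b')$ satisfies $g_0 A = \psi^{-1}(b, b')$, hence $x \in g_0 AK$. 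The main technical step is the first one: pinning down the geometric meaning and precise normalization of $\nabla_x\langle x, b\rangle$. Once that is settled, the remainder is essentially two-dimensional linear algebra combined with the $\psi$-identification.
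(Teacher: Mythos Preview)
The paper does not prove this lemma; it simply quotes it from \cite[Lem.~5.4]{HHS12}. Your approach---identifying $\nabla_x\langle x,b\rangle$ with a constant-length tangent vector pointing along the geodesic ray from $x$ to $b$, and then reducing $d\Psi=0$ to a collinearity condition in the two-dimensional space $T_x\mathbb H^2$---is correct and is essentially the argument behind \cite{HHS12}, though there it is phrased in arbitrary rank via root-space computations rather than in this geometric language.

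One small omission: you pass directly to the case $b\neq b'$ when arguing that the gradients are anti-parallel, but you should also dispose of the degenerate case $b=b'$, in which the two gradients coincide. With the sign in the second coefficient corrected as you already flag (the displayed $\Psi$ in the paper has a typo: the exponent of $J_{s_0,s_0'}$ gives $-\tfrac{2r'}{r-r'}$ in front of $\langle x,b'\rangle$, which is also what makes the subsequent reduction to $\langle x,b\rangle+\langle x,b'\rangle$ consistent), the coefficients in that case sum to $\tfrac{2r}{r-r'}-\tfrac{2r'}{r-r'}=2\neq 0$, so no critical points occur there. Adding this one line makes your argument complete.
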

In view of this result, we will assume from now on that $r=-r'>0$, so that the asymptotic parameter is just $r$ and the phase function reduces to the $r$-independent function\label{nuHHS}\footnote{In the notation of \cite[(5.1)]{HHS12} and  \cite[Sec.~5]{HHS12}, this means that $\nu=\nu'=2\varrho$ and $h=\frac{1}{r}$.} $\Psi_{b,b'}(x)\coloneq\eklm{x,b}+\eklm{x,b'}$ which satisfies
\bq
d\Psi_{b,b'}(x)=0 \iff \exists\;g\in G: (b,b')=(B_+(g),B_-(g)),\; x\in gAK.\label{eq:dPsi0}
\eq
Note that geometrically the right-hand side of \eqref{eq:dPsi0} means that the point $x$ lies on a geodesic with endpoints $b$ and $b'$ in $\partial_\infty \mathbb{H}^2$.  
To cut out the vanishing locus of $d\Psi_{b,b'}$ we use another lemma from \cite{HHS12}, which is based on the geometric fact that the endpoints of geodesics in $\mathbb{H}^2$ passing through a given compact set are contained in a compact set disjoint from the diagonal in $\partial_\infty \mathbb{H}^2\times \partial_\infty \mathbb{H}^2$ (see Figure \ref{fig:supports}).

\begin{lem}[{Compare \cite[Lem. 5.7]{HHS12}\footnote{Here we corrected a small error in \cite[Lem. 5.7]{HHS12}: in the second line (and only in that line) of \cite[Lem. 5.7]{HHS12}, $1-\beta$ needs to be replaced by $\beta$.}}] \label{lem:beta}
Let $\mathscr C\subset G$ be a compact set. Then there is a function $\beta_{\mathscr C}\in \CT((\partial_\infty \mathbb H^2)^{(2)})\subset\CT(\partial_\infty \mathbb H^2\times\partial_\infty \mathbb H^2)$ such that for all $g\in G$ we have:
\bq
gAK\cap {\mathscr C}K\neq \emptyset \implies \beta_{\mathscr C}(B_+(g),B_-(g))=1.\label{eq:gAKcapCK}
\eq  
Moreover, if $\tilde \beta_{\mathscr C}\in \Cinft(G)^A$ is the $A$-invariant lift of $\beta_{\mathscr C}\circ \psi\in \CT(G/A)$ and ${\mathscr C}_A\subset A$ is a compact set, then $K{\mathscr C}_AN_+\cap \supp \tilde \beta_{\mathscr C}$ is compact.
\end{lem}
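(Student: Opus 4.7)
The plan is to build $\beta_C$ from the compact set of pairs of boundary points which can occur as endpoints of a geodesic of $\mathbb H^2$ meeting $C$, and then to verify the second assertion by a direct Iwasawa computation.

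First I would translate the hypothesis $gAK \cap CK \neq \emptyset$ into a statement about the base. Writing $o = K \in \mathbb H^2 = G/K$, the set $gAK \subset G$ is the preimage under the base projection $G \to G/K$ of the geodesic $gA\cdot o \subset \mathbb H^2$, whose endpoints are $B_\pm(g) \in \partial_\infty \mathbb H^2$ by the very definition of the maps $B_\pm$. Hence $gAK \cap CK \neq \emptyset$ if and only if this geodesic meets the compact set $C\cdot o \subset \mathbb H^2$. Denoting by $\gamma_{b,b'}$ the geodesic with endpoints $b,b' \in \partial_\infty \mathbb H^2$, set
\[
D_C := \{(b,b') \in (\partial_\infty \mathbb H^2)^{(2)} : \gamma_{b,b'} \cap C \cdot o \neq \emptyset\}.
\]
The central step here is that $D_C$ is compact inside $(\partial_\infty \mathbb H^2)^{(2)}$. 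Closedness in $\partial_\infty \mathbb H^2 \times \partial_\infty \mathbb H^2$ is a routine subsequence argument using compactness of $C \cdot o$ and continuous dependence of geodesics on their endpoints. The nontrivial point is that the closure of $D_C$ does not meet the diagonal; for this I would invoke the standard geometric fact that a geodesic of $\mathbb H^2$ whose two endpoints are sufficiently close in the visual topology of $\partial_\infty \mathbb H^2$ is disjoint from any fixed compact subset of $\mathbb H^2$ (most transparent in the Poincar\'e disc model, where such geodesics become small circular arcs pushed close to one boundary point). Given the compactness of $D_C$, I then pick $\beta_C \in C^\infty_c((\partial_\infty \mathbb H^2)^{(2)})$ with $\beta_C \equiv 1$ on an open neighbourhood of $D_C$; this immediately yields \eqref{eq:gAKcapCK}.

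For the second assertion I would parametrise $g = kan$ with $(k,a,n) \in K \times C_A \times N_+$ via the Iwasawa decomposition, and use the identity $\kappa(kg') = k\kappa(g')$ for $k \in K$, $g' \in G$, to compute $B_+(g) = \kappa(kan) = k$ and $B_-(g) = \kappa(gw_0) = k\,\kappa(anw_0)$. Writing $n = \exp(xU_+)$, a direct application of the explicit formula \eqref{eq:Iwasawaexplicit} to the matrix $anw_0$ shows that $\kappa(anw_0)$ tends to the identity of $K$ as $|x| \to \infty$, uniformly for $a$ in the compact set $C_A$. Consequently $(B_+(g), B_-(g)) \to (k,k)$ uniformly in $(k,a) \in K \times C_A$ as $|x|\to\infty$. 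Since $\supp \beta_C$ is a compact subset of $(\partial_\infty \mathbb H^2)^{(2)}$ it stays at positive distance from the diagonal, and there exists $R > 0$ with $\tilde\beta_C(kan) = 0$ whenever $|x| \geq R$, $k \in K$, $a \in C_A$. Therefore
\[
KC_AN_+ \cap \supp \tilde\beta_C \;\subseteq\; K \cdot C_A \cdot \{\exp(xU_+) : |x|\leq R\},
\]
the image of a compact set under the continuous multiplication map, and hence itself compact.

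The only genuinely non-routine ingredient is the separation statement for endpoints of geodesics passing through a fixed compact set of $\mathbb H^2$; the remaining pieces — the choice of a compactly supported cutoff $\beta_C$ and the diagonal limit $\kappa(anw_0) \to e$ drawn directly from \eqref{eq:Iwasawaexplicit} — are rank-one computations that the matrix formulas of Section \ref{sec:setupnot} are tailor-made to handle.
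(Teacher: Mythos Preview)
Your proof is correct. Note that the paper does not actually prove this lemma but cites \cite[Lem.~5.7]{HHS12}; it only supplies the geometric intuition via Figure~\ref{fig:supports}, whose caption says precisely that endpoints of geodesics passing through a fixed compact set $CK\subset\mathbb H^2$ stay uniformly away from the diagonal in $\partial_\infty\mathbb H^2\times\partial_\infty\mathbb H^2$. Your argument realizes exactly this picture: you identify $gAK\cap CK\neq\emptyset$ with the geodesic $\gamma_{B_+(g),B_-(g)}$ meeting $C\cdot o$, prove that the resulting endpoint set $D_C$ is compact in $(\partial_\infty\mathbb H^2)^{(2)}$, and take $\beta_C$ to be a cutoff equal to $1$ near $D_C$. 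For the second assertion your explicit Iwasawa computation $\kappa(anw_0)\to e$ in $\PSL(2,\R)$ as $|x|\to\infty$ (uniformly for $a\in C_A$) is correct---the limit matrix is $[\pm I]=e$---and gives the desired compactness.
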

\begin{figure}
\newcommand{\hgline}[2]{
\pgfmathsetmacro{\thetaone}{#1}
\pgfmathsetmacro{\thetatwo}{#2}
\pgfmathsetmacro{\theta}{(\thetaone+\thetatwo)/2}
\pgfmathsetmacro{\phi}{abs(\thetaone-\thetatwo)/2}
\pgfmathsetmacro{\close}{less(abs(\phi-90),0.0001)}
\ifdim \close pt = 1pt
    \draw[blue] (\theta+180:1) -- (\theta:1);
\else
    \pgfmathsetmacro{\R}{3*tan(\phi)}
    \pgfmathsetmacro{\distance}{sqrt(9+\R^2)}
    \draw[blue] (\theta:\distance) circle (\R);
\fi
}
    \centering
    \begin{tikzpicture}\begin{scope}
        \draw[pattern={Lines[angle=45,distance={3pt/sqrt(2)}]},pattern color=lightgray] 
        (0,0) circle (3);
        \clip (0,0) circle (3);
        \hgline{-20}{-58}
        \draw[fill=black!50!white] 
        (1,-0.8) -- (0.7,-0.6)..controls(0.5,-1)..(1.3,-1.4)
         -- (1.7,-1.3) -- cycle;
       \end{scope}
   \node [blue] at (3.1,-1.1) {$b$};
   \draw [color=blue,fill=blue] (2.8,-1.05) circle (1.5pt);
   \node [blue] at (1.7,-2.9) {$b'$};
    \draw [color=blue,fill=blue] (1.6,-2.56) circle (1.5pt);
    \node [black] at (0,1.5) {$\mathbb H^2=G/K$};
    \node [black] at (-4.1,-0.5) {$\partial_\infty\mathbb H^2=\mathbb{S}^1$}; 
  \node [black] at (0.8,-0.3) {${\mathscr C}K$}; 
\end{tikzpicture}
    \caption{The projection ${\mathscr C}K\in \mathbb H^2=G/K$ of a compact set ${\mathscr C}\subset G$ and a geodesic in $\mathbb H^2$ passing through ${\mathscr C}K$ with its endpoints $b,b'\in \partial_\infty\mathbb H^2=\mathbb{S}^1$. This particular geodesic realizes the minimal distance in $\mathbb{S}^1$ of endpoints of geodesics passing through ${\mathscr C}K$ -- for any other such geodesic, its endpoints will be at least as far apart as $b$ and $b'$. In particular, the pairs of such endpoints cannot be arbitrarily close to the diagonal in $\partial_\infty\mathbb H^2\times \partial_\infty\mathbb H^2$.  } 
    \label{fig:supports}
\end{figure}
From now on, we fix some arbitrary $C>0$ and consider only parameters $s_0,s_0'\in \C\setminus (-\frac{1}{2}-\frac{1}{2}\N_0)$ of the form 
\bq
s_0=q+i r,\qquad s_0'=q'-i r,\qquad \frac{1}{2}-C\leq q, q'\leq \frac{1}{2},\;r\in \R_{>0}.\label{eq:resonanesform}
\eq
Using Lemma \ref{lem:beta} by inserting $1=\beta_{\supp f}+(1-\beta_{\supp f})$ in front of the integral in \eqref{eq:Jintegral}, the description \eqref{eq:dPsi0} of the stationary points and the non-stationary phase principle {\cite[Thm.\ 7.7.1]{Hor90}} give us
\begin{multline}
    \label{eq:intJwithremainder}
J_{s_0,s'_0}(f)(b,b') =\underbrace{\beta_{\supp f}(b,b')\int_{\mathbb{H}^2} (f\circ F^{-1}_+)(x,b) e^{s_0\eklm{x,b}+\bar s_0'\eklm{x,b'}} \, dx}_{=: J^\mathrm{prin}_{s_0,s'_0}(f)(b,b')}\\+\; {\mathrm{Rem}_{s_0,s_0'}(f)(b,b'),}
\end{multline}
where the principal term $J^\mathrm{prin}_{s_0,s'_0}(f)$ is now supported in a compact subset of  $(\partial_\infty \mathbb H^2)^{(2)}\subset\partial_\infty \mathbb H^2\times\partial_\infty \mathbb H^2$ independent of $s_0,s_0'$  and the remainder $\mathrm{Rem}_{s_0,s_0'}(f)\in \Cinft(\partial_\infty \mathbb H^2\times\partial_\infty \mathbb H^2)$ satisfies
\bq
\mathrm{Rem}_{s_0,s_0'}(f)=\O_{\Cinft(\partial_\infty \mathbb H^2\times\partial_\infty \mathbb H^2)}(r^{-\infty})\quad \text{as }r\to +\infty.\label{eq:remainder}
\eq
By this notation we mean that for every continuous seminorm $\mathsf p$ on the Fréchet space $\Cinft(\partial_\infty \mathbb H^2\times\partial_\infty \mathbb H^2)$, for example any Sobolev norm, one has $\mathsf p(\mathrm{Rem}_{s_0,s_0'}(f))=\O(r^{-\infty})$.

By the general integration formula \eqref{eq:dxdadn} and the $G$-invariance of the measure $dx$ on $\mathbb{H}^2$, we can rewrite $J^\mathrm{prin}_{s_0,s'_0}(f)(b,b')$ as
\bq
    \label{eq:intw_AN}
J^\mathrm{prin}_{s_0,s'_0}(f)(b,b') =\beta_{\supp f}(b,b')\int_A\int_{N_+}  (f\circ F^{-1}_+)(gan\cdot o,b) e^{s_0\eklm{gan\cdot o,b}+\bar s_0'\eklm{gan\cdot o,b'}} \, d n \d a,
\eq
where $o=K\in G/K=\mathbb{H}^2$ is the canonical base point and $g\in G$ is arbitrary. 

As in \cite{AZ07} and \cite[Sec.~5.2]{HHS12}, we proceed by writing the integrations over $A$ and $N_+$ separately as two operators called the \emph{(weighted) Radon transform} and the \emph{intertwining operator} with respect to the parameters $s_0,s_0'$, respectively. To begin, we compose $J_{s_0,s'_0}$ with the diffeomorphism $\psi:G/A \stackrel{\cong}{\to} (\partial_\infty \mathbb H^2)^{(2)}\subset \partial_\infty \mathbb{H}^2\times \partial_\infty \mathbb{H}^2$ introduced in \eqref{eq:psi}. Given $(b,b')\in (\partial_\infty \mathbb H^2)^{(2)}$ and $g\in G$ such that $\psi(gA)=(b,b')$, write $n_g\coloneq\exp(\mathscr{N}(g))$, $a_g\coloneq\exp(\mathscr{A}(g))$, so that $g=\kappa(g)a_gn_g$. Then 
$$(b,b')=\psi(gA)\equiv(B_+(g),B_-(g))=(\kappa(g),\kappa(gw_0))=(g\cdot e,g w_0\cdot e).$$
We now compute using \eqref{eq:horocycle_bracket} for all $a\in A$, $n\in N_+$:
\begin{equation*}
\begin{split} \label{eq:brackets_b}
        \eklm{gan\cdot o,b}
        =\eklm{gan\cdot o, \kappa(g)}
        =-2\varrho(\mathscr A(n^{-1}a^{-1}n_g^{-1}a_g^{-1}))
        =2\varrho(\mathscr A(ga)), 
\end{split}
\end{equation*}
since $A$ normalizes $N_+$ and $\mathscr A(kg'n)=\mathscr A(g')$ for all $k\in K$, $g'\in G$, $n\in N_+$. The analogous computation for $\eklm{gan\cdot o,b'}$ is more involved due to the appearance of $w_0$. It has been carried out in \cite[p.~629]{HHS12} with the result (taking into account Remark \ref{rem:conventions})
\begin{equation*}
\label{eq:brackets_b'}
        \eklm{gan\cdot o,b'}    =-2\varrho(\mathscr A(n^{-1}w_0))+2\varrho(\mathscr A(gaw_0)).
\end{equation*}
Recalling the definition of the $G$-equivariant diffeomorphism $F_+:G=S\mathbb H^2\stackrel{\cong}{\to} \mathbb H^2\times\partial_\infty \mathbb H^2$ from \eqref{eq:Fpm}, we can easily decompose the function $f \circ F^{-1}_+\in \CT(\mathbb{H}^2 \times \partial_\infty \mathbb{H}^2)$ by preserving the canonical base points as follows:
\bq \label{eq:tilda_a_ref}
  (f\circ F^{-1}_+)(gan \cdot o, b)
  = (f\circ F^{-1}_+)(gan \cdot o, g \cdot e)
  = (f\circ F^{-1}_+)(gan \cdot o, gan \cdot e)
  =  f(gan).
\eq
Taking into account that $f\in \CT(G)$, the smooth function $G/A\to \C$ given by
\[
gA\mapsto\int_{A}e^{s_02\varrho(\mathscr{A}(ga))+\bar s'_02\varrho(\mathscr{A}(ga \omega_0))} f(ga) \; da
\]
is compactly supported for the same geometric reasons underlying Lemma \ref{lem:beta} (cf.~\cite[p.~91]{GAGA} and Figure~\ref{fig:supports}), so that its pre-composition with  $\psi^{-1}:(\partial_\infty \mathbb{H}^2)^{(2)}\to G/A$ extends smoothly by zero to $\partial_\infty \mathbb{H}^2\times \partial_\infty \mathbb{H}^2$. This makes it possible to define the \emph{(weighted) Radon transform} as a continuous operator $\mathcal{R}_{s_0,s'_0}:\CT(G)\to C^\infty(\partial_\infty \mathbb{H}^2\times \partial_\infty \mathbb{H}^2)$
by
\bq \label{eq:Radontransform}
\mathcal{R}_{s_0,s'_0}(f)(b,b')\coloneq\begin{cases}\int_{A}e^{s_02\varrho(\mathscr{A}(ga))+\bar s'_02\varrho(\mathscr{A}(ga \omega_0))} f(ga) \; da,\qquad &b\neq b',\;(b,b')=\psi(gA),\\
    0, &b=b'.
\end{cases}
\eq
The following lemma will be used later to estimate remainder terms.
It is analogous to \cite[Prop.~4.7]{HHS12} (see also \cite[Prop.\ 3.6, Eq.~(3.14)]{AZ07}). 
\begin{lem}\label{lem:radoncontinuity}
Let $\chi\in \CT(G)$ and recall that we fixed the constant $C>0$ before \eqref{eq:resonanesform}. For each continuous seminorm $\mathsf p$ on $C^\infty(\partial_\infty \mathbb{H}^2\times \partial_\infty \mathbb{H}^2)$, there is a continuous seminorm $\mathsf p'$ on $\CT(G)$ and an $N\in \N_0$  such that for all quantum resonances $s_0,s_0'$ of the form \eqref{eq:resonanesform}  and all $f\in \CT(G)$ one has
\[
\mathsf p\big(\mathcal{R}_{s_0,s'_0}(\chi f)\big)\leq (1+r)^{N}\mathsf p'(\chi f).
\]
\end{lem}
\begin{proof}
The Fréchet topology on $C^\infty(\partial_\infty \mathbb{H}^2\times \partial_\infty \mathbb{H}^2)$ is generated by seminorms of the form 
\[
\norm{X_1\cdots X_N a}_\infty,\qquad a\in C^\infty(\partial_\infty \mathbb{H}^2\times \partial_\infty \mathbb{H}^2)
\]
where $N\in \N_0$ and $X_1,\ldots,X_N$ are vector fields on $\partial_\infty \mathbb{H}^2\times \partial_\infty \mathbb{H}^2$. So w.l.o.g.\ $\mathsf p$ is such a seminorm. Now, since $s_0,s_0'$ only appear in the exponential factor in the integral defining $\mathcal{R}_{s_0,s'_0}$ in \eqref{eq:Radontransform}, it follows that for all $f\in \CT(G)$ one has
\[
\norm{X_1\cdots X_N \mathcal{R}_{s_0,s'_0}(\chi f)}_\infty\leq (1+r)^N\underbrace{e^{bC}\sum_{j=0}^N\norm{X'_1\cdots X'_j \chi f}_\infty}_{=:\mathsf p'(f)},
\]
for a constant $b>0$ ($C$ being the constant from the statement of the Lemma) and vector fields $X'_1,\ldots,X'_N$ that only depend on the vector fields $X_1,\ldots,X_N$ but neither on $C$, $\chi$ nor $s_0,s_0'$. The so-defined seminorm $\mathsf p'$ is continuous on $\CT(G)$ with respect to the standard LF topology as it only involves finitely many vector fields acting on $f$ on the compact support of $\chi$. 
\end{proof}
 Next, we consider the \emph{intertwining operator} 
\bq
  \mathcal I_{s'_0}:\CT(G)\to \Cinft(G),\qquad \mathcal I_{s'_0}(f)(g)\coloneq\int_{N_+}  e^{-\bar s'_02\varrho(\mathscr{A}(n^{-1} \omega_0))}f(gn)\; dn.\label{eq:intw}
\eq
For each $f\in \CT(G)$, if we take a function $\tilde{\beta}_{\supp f}\in \Cinft(G)^A$  as in Lemma \ref{lem:beta}, then there is a compact subset $\mathscr C_{\supp f}\subset G$ depending only on the compact set $\supp f\subset G$ (and not on $s_0'$ or on $f$ except via $\supp f$), such that
\bq
\supp\big(\tilde{\beta}_{\supp f} \mathcal I_{s'_0}(f)\big)\subset \mathscr C_{\supp f}.\label{eq:betaIcptsupp}
\eq
Indeed, since $G=KAN_+$ and $\supp f$ is compact, we have $\supp f\subset KC_AC_{N_+}$ for compact sets $C_A\subset A$, $C_{N_+}\subset N_+$, so the function $\mathcal I_{s'_0}(f)$  is supported in $KC_AN_+$ and by Lemma \ref{lem:beta} the product $\tilde{\beta}_{\supp f}\mathcal I_{s'_0}(f)$ has compact support. 
%

Recall that  $J^\mathrm{prin}_{s_0,s'_0}(f)(b,b')=0$ if $b=b'$. For $b\neq b'$, we follow \cite[p.~632]{HHS12} to express $J^\mathrm{prin}_{s_0,s'_0}(f)(b,b')$ using Fubini's theorem as
\begin{eqnarray} \label{eq:osci}
    J^\mathrm{prin}_{s_0,s'_0}(f)(b,b')    &\stackrel{\eqref{eq:intw_AN} - \eqref{eq:tilda_a_ref}}{=}& \beta_{\supp{f}}(b,b')\int_A e^{s_02\varrho(\mathscr{A}(ga))+\bar s'_0\varrho(\mathscr{A}(ga \omega_0))} \nonumber \\
    &&\qquad\qquad\qquad\qquad\int_{N_+}  f(gan) e^{-\bar s'_02\varrho(\mathscr{A}(n^{-1}w_0))} \; \d n \d a \nonumber \\ 
    &\stackrel{\eqref{eq:intw}}{=}& \int_{A} e^{s_02\varrho(\mathscr{A}(ga))+\bar s'_02\varrho(\mathscr{A}(ga \omega_0))} \tilde{\beta}_{\supp  f}(ga)\mathcal{I}_{s'_0} (f)(ga) \; da  \nonumber \\  
    &\stackrel{\eqref{eq:Radontransform}}{= }& \mathcal{R}_{s_0,s'_0}(\tilde{\beta}_{\supp  f}\mathcal{I}_{s'_0}(f))(g),
\end{eqnarray}
where $(b,b')=(B_+(g),B_-(g))=\psi(gA)$. Finally, going all the way back to \eqref{eq:explicit_W} and applying the distribution $T_{\phi}\otimes \overline T_{\phi'}$ to the expression for $J^\mathrm{prin}_{s_0,s'_0}$ obtained in \eqref{eq:osci}, we get:
\begin{lem}\label{lem:Wignerleadingterm}
Fix $C>0$, let $s_0,s_0' \in \C$ be two quantum resonances of the form $s_0=q+i r$, $s_0'=q'-i r$, where $\frac{1}{2}-C\leq q,q'\leq\frac{1}{2}$, $r\in \R_{>0}$,  
    and consider quantum resonant states $\phi\in \mathrm{Res}^1_{\Laplace}(s_0)$, $\phi'\in \mathrm{Res}^1_{\Laplace}(s_0')$. 
    Then, identifying $G=S\mathbb H^2$ via \eqref{eq:identSH}, the distribution $\widetilde W_{\phi,\phi'}\in \D'(S\mathbb H^2)^\Gamma$ defined in \eqref{eq:explicit_W} satisfies 
    \bq
\widetilde W_{\phi,\phi'}(f)
    = (T_{\phi} \otimes \overline T_{\phi'})\Big({\mathcal{R}_{s_0,s'_0}\big(\tilde{\beta}_{\supp f}\mathcal{I}_{s'_0}(f)\big)} + \mathrm{Rem}_{s_0,s_0'}(f)\Big),\label{eq:Wignerformula}\qquad f\in  C^\infty_c(G),
    \eq
    where $\mathrm{Rem}_{s_0,s_0'}(f)\in \Cinft(\partial_\infty \mathbb H^2\times\partial_\infty \mathbb H^2)$ is the $\O(r^{-\infty})$-remainder term from \eqref{eq:remainder}.  \qed
\end{lem}

\subsection{Patterson-Sullivan distributions {and proof of Theorem \ref{thm:asymptotic_WPS}}} 
\label{sect:PS}
As explained in the introduction, Patterson-Sullivan distributions were first introduced by Anantharaman and Zelditch \cite[Def.~3.3]{AZ07} in the setting of compact hyperbolic surfaces and generalized to compact higher rank locally symmetric spaces by Hansen, Hilgert and Schröder \cite{Sch10,HS09,HHS12}, who also introduced off-diagonal Patterson-Sullivan distributions. In this paper we use the quantum-classical correspondence approach developed in \cite{GHWb}.

\subsubsection{Description in terms of resonant and co-resonant states}
\label{sect:PS_Resonant}
Guillar\-mou-Hilgert-Weich worked out in \cite[Thm.~5.2]{GHWb} that on compact rank one locally symmetric spaces Patterson-Sullivan distributions have a description as products of resonant and co-resonant states. We will use this description in this paper to define the Patterson-Sullivan distributions in the convex-cocompact setting.


\begin{definition}[Patterson-Sullivan distributions in terms of (co-)resonant states] \label{def:PS_Resonant}
    Given $s_0,s_0' \in \C\setminus (-\frac{1}{2}-\frac{1}{2}\N_0)$ and two quantum resonant states $\phi \in \mathrm{Res}^{1}_{\Laplace}(s_0), \phi' \in \mathrm{Res}^{1}_{\Laplace}(s_0')$, consider the classical resonant and co-resonant states 
    \begin{align*}
         v_{\phi}&\coloneq\mathbf I_-(\phi)\in \mathrm{Res}^1_{X}(s_0-1)\cap \ker U_-\subset \D'(S\mathbf{X}_\Gamma),\\
        v_{\phi'}^\ast&\coloneq\mathbf I_+(\phi')\in\mathrm{Res}^1_{X^\ast}(\bar s_0'-1)\cap \ker U_+\subset \D'(S\mathbf{X}_\Gamma),
    \end{align*}  
    with the maps $\mathbf I_\pm$ from \eqref{eq:I_minus} and \eqref{eq:I_plus}, respectively.
    Then we define the (off-diagonal) \emph{Patterson-Sullivan distribution} $ \mathrm{PS}_{\phi, \phi'} \in \D'(S\mathbf{X}_\Gamma)$  as the product
    \begin{equation} \label{eq:PS}
        \mathrm{PS}_{\phi,\phi'}\coloneq v_{\phi} \cdot \bar v_{\phi'}^\ast.
    \end{equation}
\end{definition} 
Here the product $v_{\phi} \cdot \bar v_{\phi'}^\ast$ is well-defined and when $s_0=\bar s_0'$ then it is $\varphi_t$-invariant, as already explained around \eqref{eq:distproductXzero}. Note that when $\phi=\bar \phi'$ and $s_0=\bar s_0'$ we recover the usual ``diagonal'' Patterson-Sullivan distributions {studied} in \cite{AZ07}.

\subsubsection{Description in terms of the (weighted) Radon transform}
\label{sect:PS_Radon}
From the proof of \cite[Thm.~5.2]{GHWb} (with $s_0=\mu-\varrho$ and $s'_0=\mu'-\varrho$ in our setting, see Remark~\ref{rem:scaling}) we know that the Patterson-Sullivan distribution \eqref{eq:PS} can also be reformulated in terms of the (weighted) Radon transform as follows. For $u\in C_c^\infty(S\mathbf{X}_\Gamma)$, we have  
\bq\label{eq:PS_Radon}
    \mathrm{PS}_{\phi, \phi'}(u) 
    =  (T_{\phi} \otimes \overline T_{\phi'})\big(\mathcal{R}_{s_0,s'_0}(\chi \tilde{u})\big), 
\eq
where $\mathcal{R}_{s_0,s'_0}$ is the Radon transform defined in \eqref{eq:Radontransform}, 
$T_{\phi},T_{\phi'}\in \D'(\partial_\infty \mathbb{H}^2)$ are the Helgason boundary values defined in \eqref{eq:Helgbval} and which comes from the explicit description of $\mathbf{I}_\pm$ in \eqref{eq:I_minus} and \eqref{eq:I_plus}, respectively{. Recall that $\chi \tilde{u}\in C^\infty_c(S\HH^2)$ is the product of the $\Gamma$-invariant lift $\tilde u\in \Cinft(S\mathbb H^2)^\Gamma$ of $u$ and a smooth fundamental domain cutoff $\chi\in \CT(S\mathbb H^2)$ near $\supp \tilde u$, as defined in Definition \ref{def:smoothfundcutoff}.

\subsubsection{Proof of main results}
\label{sect:asymptotic_equiv}
We are finally in the position to prove Theorem \ref{thm:asymptotic_WPS}. It will follow at once from the following slightly more general result.

\begin{thm}\label{thm:precise} For $j\in \N$, let $s_j,s_j' \in \C\setminus (-\frac{1}{2}-\frac{1}{2}\N_0)$ be quantum resonances of the form $s_j=q_j+ir_j$, $s_j'=q_j' -ir_j$, where $r_j\to +\infty$ as $j\to \infty$ and $\frac{1}{2}-C\leq q_j,q'_j\leq\frac{1}{2}$ for some $C>0$. Let $\phi_j\in \mathrm{Res}^1_{\Laplace}(s_j)$ and $\phi_j'\in \mathrm{Res}^1_{\Laplace}(s_j')$ be quantum resonant states. Then there are sequences of operators $L_{M,s_j'},R_{s_j'}:\CT(S\mathbf{X}_\Gamma)\to \CT(S\mathbf{X}_\Gamma)$, $M,j\in \N$, which are uniformly continuous in $j$ (for fixed $M$ in case of $L_{M,s_j'}$), such that for all $M\in \N$, $u\in \CT(S\mathbf{X}_\Gamma)$, $k\geq k_C$ with $k_C>0$ as in Definition \ref{def:moderatenormalization} one has
  \begin{align*}
W_{\phi_j,\phi'_j}(u) &= \frac{e^{-i\frac{\pi}{4}}}{\sqrt{\pi} }\, r_j^{-1/2} {\mathrm{PS}}_{\phi_j,\phi_j'}\big(u+R_{s_j'}(u)r_j^{-1}\big)\\
&\qquad\qquad+\O\Big(r_j^{-M}\Vert T_{\phi_j}\otimes \overline{T}_{\phi'_j}\Vert_{H^{-k}(\partial_\infty \mathbb{H}^2\times \partial_\infty \mathbb{H}^2)}\Big),\\
W_{\phi_j,\phi'_j}\big(u+L_{M,s_j'}(u)r_j^{-1}\big) &= \frac{e^{-i\frac{\pi}{4}}}{\sqrt{\pi} }\, r_j^{-1/2} {\mathrm{PS}}_{\phi_j,\phi_j'}(u)\\
&\qquad\qquad+\O\Big(r_j^{-M}\Vert T_{\phi_j}\otimes \overline{T}_{\phi'_j}\Vert_{H^{-k}(\partial_\infty \mathbb{H}^2\times \partial_\infty \mathbb{H}^2)}\Big).
  \end{align*}
\end{thm}
Before proving Theorem \ref{thm:precise}, let us quickly check how it implies Theorem \ref{thm:asymptotic_WPS}: 
\begin{proof}[Proof of Theorem \ref{thm:asymptotic_WPS}] By Definition \ref{def:moderatenormalization},  the sequence $\{(\phi_j,\phi_j')\}_{j\in \N}$ being moderately normalized means that for some $k$ as in Theorem \ref{thm:precise} and some $N'\in \N$ we have the Sobolev estimate $\Vert T_{\phi_j}\otimes \overline{T}_{\phi'_j}\Vert_{H^{-k}(\partial_\infty \mathbb{H}^2\times \partial_\infty \mathbb{H}^2)}=\O(r_j^{N'})$. Hence the statement of Theorem \ref{thm:asymptotic_WPS} for a given $N\in\N$ is obtained by taking $M=N+N'$ in Theorem \ref{thm:precise}.    
\end{proof}
    
\begin{proof}[Proof of Theorem \ref{thm:precise}]  For notational simplicity, we work with $s_0=q+ir,s'_0=q'-ir$, noting that the argument holds unchanged for sequences $s_j,s'_j$ as in the statement. 

Note that by \eqref{eq:PS_Radon} and \eqref{eq:Wexplicit00} both the Patterson-Sullivan distribution and the Wigner distribution act on a function $u\in \CT(S\mathbf{X}_\Gamma)$ by evaluating a $\Gamma$-invariant distribution on $S\mathbb H=G$ at $f=\chi \tilde u\in \CT(G)$. Moreover, the $\Gamma$-invariant distribution $(T_{\phi} \otimes \overline T_{\phi'})\circ\mathcal{R}_{s_0,s'_0}\in \D'(G)$ featured on the right hand side of \eqref{eq:PS_Radon} looks similar to the distribution $f\mapsto (T_{\phi} \otimes \overline T_{\phi'})\big(\mathcal{R}_{s_0,s'_0}\big(\tilde{\beta}_{\supp f}\mathcal{I}_{s'_0}(f)\big)\big)$ in the asymptotic  formula \eqref{eq:Wignerformula} for the Wigner distribution. To find a precise formula for the difference between the two distributions evaluated at a given $f\in \CT(G)$, we  compare $f$ with the function $\tilde{\beta}_{\supp f}\mathcal{I}_{s'_0}(f)\in \CT(G)$. To this end, we perform a stationary phase expansion of the oscillatory integral defining the function $\mathcal{I}_{s'_0}(f)\in \Cinft(G)$ in \eqref{eq:intw}: Let $g\in G$  and apply \cite[(5.15) and (5.16)]{HHS12} (where in our setting their variable $\nu'$ is given by $\nu'=2\varrho$ and the asymptotic parameter is $h=r^{-1}$, as already observed on p.~\pageref{nuHHS})  to the amplitude
\[
f_{q',g}(n)\coloneq e^{(\frac{1}2-q')2\varrho(\mathscr{A}(n^{-1}\omega_0))}f(gn),\quad n\in N_+.
\]
Here the presence of the factor $(\frac{1}2-q')$, as opposed to just $-q'$, is due to the fact that the measure $\dj n$ used in \cite[(5.15) and (5.16)]{HHS12} involves a $\varrho$-shift. Then from \cite[(5.16)]{HHS12} we get that $\mathcal I_{s'_0}(f)(g)$ has the following asymptotic expansion:
\begin{equation} 
    \mathcal I_{s'_0}(f)(g)=
    \frac{e^{-i\frac{\pi}{4}}}{\sqrt{\pi}} r^{-1/2} \big(f(g) + \mathcal{O}(r^{-1})\big) \qquad \text{as }r\to +\infty,\label{eq:esti3252339089}
\end{equation}
where we note\footnote{Using \cite[(5.13)]{HHS12} in our setting, we have (in the notation of the reference) $\kappa(-2\varrho)=(\sqrt{2}\pi)^{-1}e^{i\pi/4}$ and $C_N=(\sqrt{2}\pi)^{-1}$ (see \eqref{eq:dxdadn}), and $\|\alpha\|=1$.} that $f_{q',g}(e)=e^{(\frac{1}2-q')2\varrho(\mathscr{A}(\omega_0))}f(g)=f(g)$ since $\mathscr{A}(\omega_0)=0$. For any finite family $X_1,\ldots, X_N$ of smooth vector fields $X_j$ on $G$ one has 
\[
X_1\cdots X_N\tilde{\beta}_{\supp f} \mathcal I_{s'_0}(f) = \sum_{0\leq j\leq N} \tau_j\mathcal I_{s'_0}(X_1\cdots X_j f)
\]
with some functions $\tau_j\in \Cinft(G)$ given by derivatives of $\tilde{\beta}_{\supp f}$, so that the inclusion $\supp (\tau_j \mathcal I_{s'_0}(X_1\cdots X_j f))\subset \supp (\tilde{\beta}_{\supp f}\mathcal I_{s'_0}(X_1\cdots X_j f))$ holds, in particular the support of each function $\tau_j\mathcal I_{s'_0}(X_1\cdots X_j f)$ is compact by \eqref{eq:betaIcptsupp}. Since the pointwise estimates \eqref{eq:esti3252339089} apply to each of the functions $X_1\cdots X_j f$, we arrive at an estimate in $\CT(G)$:
\begin{equation}
   \tilde{\beta}_{\supp f} \mathcal I_{s'_0}(f)=\frac{e^{-i\frac{\pi}{4}}}{\sqrt{\pi}}r^{-1/2} \big(f + \mathrm{Rem}^{(1)}_{s_0'}(f)\big),\label{eq:intw_asymptotic}
\end{equation}
where $\mathrm{Rem}^{(1)}_{s_0'}(f)\in \CT(G)$ satisfies 
\bq
\supp \mathrm{Rem}^{(1)}_{s_0'}(f)\subset \mathscr C_{\supp f}\cup \supp f\label{eq:remaindersupport}
\eq
with $\mathscr C_{\supp f}\subset G$ the compact set from \eqref{eq:betaIcptsupp}, and 
\bq
\mathrm{Rem}^{(1)}_{s_0'}(f)=\mathcal{O}_{\CT(G)}(r^{-1})\qquad \text{as }r\to +\infty,\label{eq:remasymp2}
\eq
which means that for any continuous seminorm $\mathsf p$ on the LF-space $\CT(G)$ one has $\mathsf p\big(\mathrm{Rem}^{(1)}_{s_0'}(f)\big)=\O(r^{-1})$. 
Here  to get the leading term in \eqref{eq:intw_asymptotic}  we used that, as a consequence of \eqref{eq:gAKcapCK}, the function $\tilde{\beta}_{\supp f}$ is equal to $1$ on $\supp f$.

Now we proceed inductively as in \cite[proof of Thm.~7.4]{HHS12} (which is essentially a variant of the original argument in \cite[Sec.~4.2]{AZ07}): Define $\mathrm{Rem}^{(0)}_{s_0'}(f):=f$ and $
\mathrm{Rem}^{(j)}_{s_0'}(f):=\mathrm{Rem}^{(1)}_{s_0'}(\mathrm{Rem}^{(j-1)}_{s_0'}(f))\in \CT(G)$ for $j\in \N$, so that
\bq
\mathrm{Rem}^{(j)}_{s_0'}(f)=\mathcal{O}_{\CT(G)}(r^{-j})\qquad \text{as }r\to +\infty\label{eq:remasymp3477}
\eq
and, as a consequence of \eqref{eq:remaindersupport}, there is a compact set $\mathscr C_j\subset G$ depending only on $\supp f$ (and not on $s_0'$ or on $f$ except via $\supp f$) such that
\bq
\supp  \mathrm{Rem}^{(j)}_{s_0'}(f)\subset \mathscr C_j. \label{eq:suppKj}
\eq
We then have for each $N\in \N$
\bqn
    \tilde{\beta}_{\supp f} \mathcal I_{s'_0}(f)-\sum_{j=1}^N \tilde{\beta}_{\mathscr C_j} \mathcal I_{s'_0}(\mathrm{Rem}^{(j)}_{s_0'}(f))=\frac{e^{-i\frac{\pi}{4}}}{\sqrt{\pi}}r^{-1/2} \big(f - \mathrm{Rem}^{(N+1)}_{s_0'}(f)\big).
\eqn
Applying the distribution $(T_{\phi} \otimes \overline T_{\phi'})\circ\mathcal{R}_{s_0,s'_0}$ and using Lemma \ref{lem:Wignerleadingterm} gives us
\begin{align}\begin{split}\label{eq:80239023}
    \widetilde W_{\phi,\phi'}(f)-\sum_{j=1}^N \widetilde W_{\phi,\phi'}(\mathrm{Rem}^{(j)}_{s_0'}(f))&=\frac{e^{-i\frac{\pi}{4}}}{\sqrt{\pi}}r^{-1/2} \Big((T_{\phi} \otimes \overline T_{\phi'})\big(\mathcal{R}_{s_0,s'_0}(f)\big) \\
    &\qquad \qquad\quad  -(T_{\phi} \otimes \overline T_{\phi'})\big(\mathcal{R}_{s_0,s'_0}\big(\mathrm{Rem}^{(N+1)}_{s_0'}(f)\big)\big)\Big)\\
    &\quad +(T_{\phi} \otimes \overline T_{\phi'})\big(\mathrm{Rem}^{-\infty}_{N,s_0,s_0'}(f)\big),\end{split}
\end{align}
where the remainder $\mathrm{Rem}^{-\infty}_{N,s_0,s_0'}(f)\in \Cinft(\partial_\infty \mathbb H^2\times\partial_\infty \mathbb H^2)$ is obtained by collecting the $\mathcal{O}(r^{-\infty})$-remainders from the $N$ applications of Lemma \ref{lem:Wignerleadingterm} and satisfies
\bqn
\mathrm{Rem}^{-\infty}_{N,s_0,s_0'}(f)=\mathcal{O}_{\Cinft(\partial_\infty \mathbb H^2\times\partial_\infty \mathbb H^2)}(r^{-\infty})\qquad \text{as }r\to +\infty.
\eqn

On the other hand, applying Lemma \ref{lem:Wignerleadingterm} directly to \eqref{eq:intw_asymptotic} gives us
\bq
\widetilde W_{\phi,\phi'}(f)=\frac{e^{-i\frac{\pi}{4}}}{\sqrt{\pi}}r^{-1/2} \Big((T_{\phi} \otimes \overline T_{\phi'})\big(\mathcal{R}_{s_0,s'_0}\big(f+\mathrm{Rem}^{(1)}_{s_0'}(f)\big)\big)\Big)
     +(T_{\phi} \otimes \overline T_{\phi'})\big(\mathrm{Rem}_{s_0,s_0'}(f)\big) \label{eq:3252930}
\eq
with $\mathrm{Rem}_{s_0,s_0'}(f)=\mathcal{O}_{\Cinft(\partial_\infty \mathbb H^2\times\partial_\infty \mathbb H^2)}(r^{-\infty})$ as in Lemma \ref{lem:Wignerleadingterm}.

Now, given $M\in \N$, then by \eqref{eq:remasymp3477} and Lemma \ref{lem:radoncontinuity} the term $\mathcal{R}_{s_0,s'_0}\big(\mathrm{Rem}^{(N+1)}_{s_0'}(f)\big)$ in \eqref{eq:80239023} is of size $\mathcal{O}_{\Cinft(\partial_\infty \mathbb H^2\times\partial_\infty \mathbb H^2)}(r^{-M})$ for $N$ large enough. Fixing such an $N$, the term $\mathrm{Rem}^{-\infty}_{N,s_0,s_0'}(f)$ is also of size $\mathcal{O}_{\Cinft(\partial_\infty \mathbb H^2\times\partial_\infty \mathbb H^2)}(r^{-M})$. Since for any $k\in \R$ the Sobolev norm $\norm{\cdot}_{H^k}$ is a continuous seminorm on $\Cinft(\partial_\infty \mathbb H^2\times\partial_\infty \mathbb H^2)$ and $H^{-k}(\partial_\infty \mathbb{H}^2\times \partial_\infty \mathbb{H}^2)\subset \D'(\partial_\infty \mathbb{H}^2\times \partial_\infty \mathbb{H}^2)$ is dual to $H^{k}(\partial_\infty \mathbb{H}^2\times \partial_\infty \mathbb{H}^2)$, this implies that the terms in the last two lines of \eqref{eq:80239023} are of size $\O(r^{-M}\Vert T_{\phi}\otimes \overline{T}_{\phi'}\Vert_{H^{-k}(\partial_\infty \mathbb{H}^2\times \partial_\infty \mathbb{H}^2)})$ when $k\geq k_C$. 

Similarly, the remainder in \eqref{eq:3252930} is  $\O(r^{-M}\Vert T_{\phi}\otimes \overline{T}_{\phi'}\Vert_{H^{-k}(\partial_\infty \mathbb{H}^2\times \partial_\infty \mathbb{H}^2)})$ when $k\geq k_C$.

Finally, we rewrite the leading terms in the claimed form. To this end, we now return to the setup at the beginning of the proof by plugging in $f=\chi \tilde u$, and then we use Lemma \ref{lem:newlemma}: This allows us to write the left-hand side of \eqref{eq:80239023} as $W_{\phi,\phi'}\big(u+L_{N,s_0'}(u)r^{-1})\big)$ where, in the notation of Lemma \ref{lem:newlemma}, we have $L_{N,s_0'}(u):=r\,u_{f_N,s_0'}$ with
\[
f_{N,s_0'}:=\sum_{j=1}^N \mathrm{Rem}^{(j)}_{s_0'}(\chi \tilde u).
\]
The operator family $L_{N,s_0'}:\CT(S\mathbf{X}_\Gamma)\to \CT(S\mathbf{X}_\Gamma)$ is uniformly continuous in $r>0$ by Lemma \ref{lem:newlemma}, \eqref{eq:remasymp3477} and \eqref{eq:suppKj}. Now using \eqref{eq:PS_Radon}, we can rewrite \eqref{eq:80239023} for $f=\chi \tilde u$ in terms of Patterson-Sullivan distributions in the form
\begin{align*}
    W_{\phi,\phi'}\big(u+L_{N,s_0'}(u)r^{-1})\big) &= \frac{e^{-i\frac{\pi}{4}}}{\sqrt{\pi} }\, r^{-1/2} {\mathrm{PS}}_{\phi,\phi'}(u)\\
&\qquad\qquad+\O\Big(r^{-M}\Vert T_{\phi}\otimes \overline{T}_{\phi'}\Vert_{H^{-k}(\partial_\infty \mathbb{H}^2\times \partial_\infty \mathbb{H}^2)}\Big),
\end{align*}
which is precisely the second claimed formula in Theorem~\ref{thm:precise} when defining $L_{M,s_0'}:=L_{N,s_0'}$ for some arbitrary large enough $N$ depending on $M$. 

Analogously, we get the first claimed formula in Theorem~\ref{thm:precise} from \eqref{eq:3252930} by defining $R_{s_0'}(u):=r\,u_{f_{s_0'}}$, where $f_{s_0'}:=\mathrm{Rem}^{(1)}_{s_0'}(\chi \tilde u)$.
\end{proof}

\bibliographystyle{amsalpha}
\bibliography{literatur}

\bigskip

\end{document}